\documentclass[a4paper,10pt,twoside]{amsart}

\usepackage{a4wide}
\usepackage[latin1]{inputenc}
\usepackage{amssymb}
\usepackage{mathrsfs}
\usepackage{paralist}
\usepackage{url}
\usepackage[bookmarks=false,pdfborder={0 0 0.05}]{hyperref}
\usepackage{color}
\usepackage{tikz}
\usepackage{pgfplots}

\usepackage{amsmath,amssymb,amsfonts}

\usepackage{color}
\usepackage{tikz}
\usepackage{pgfplots}
\usetikzlibrary{calc}
\usetikzlibrary{shapes}
\usetikzlibrary{intersections}
\usetikzlibrary{patterns}
\usetikzlibrary{matrix}

\usepackage{enumerate}

\newtheorem{theorem}{Theorem}[section]
\newtheorem{proposition}[theorem]{Proposition}
\newtheorem{corollary}[theorem]{Corollary}
\newtheorem{lemma}[theorem]{Lemma}

\theoremstyle{definition}
\newtheorem{definition}[theorem]{Definition}
\newtheorem{example}[theorem]{Example}

\newtheorem{remark}[theorem]{Remark}

\def\R{\mathbb{R}}
\def\Z{\mathbb{Z}}
\def\N{\mathbb{N}}
\def\TwoCov{\mathcal{C}^2_{\c}}
\def\tTwoCov{\mathcal{C}^2_{\tc}}
\def\c{{\bf c}}
\def\tc{{\bf {\tilde c}}}
\def\sameelement{\approx}
\def\equptocomm{\sim}

\definecolor{darkblue}{rgb}{0,0,0.7} 
\newcommand{\darkblue}{\color{darkblue}} 
\newcommand{\Dfn}[1]{\emph{\darkblue #1}} 

\newcommand{\Perm}{{\mathsf {Perm}}}
\newcommand{\Asso}{{\mathsf {Asso}}}
\newcommand{\set}[2]{\left\{#1\vphantom{#2}\right.\;\left|\;\vphantom{#1}#2\right\}}

\newcommand\uword{\mathbf u}
\newcommand\vword{\mathbf v}
\newcommand\wword{\mathbf w}
\newcommand\wc{\pmb{w}^\c}
\newcommand\woword{\mathbf w _\circ}
\newcommand\wo{w_\circ}
\newcommand\woc{{\wo^\c}}
\newcommand\bwoc{{\pmb{w}_{\pmb \circ}^\c}}
\newcommand\sym{\Sigma}
\newcommand\precdot{\prec\negmedspace\negmedspace\cdot}

\DeclareMathOperator{\conv}{\mathsf{conv}}

%
%
%
%

\title{Cambrian acyclic domains: counting $c$-singletons}

\author[J.-P.~Labb\'e]{Jean-Philippe Labb\'e$^{1,2}$}
\address[J.-P.~Labb\'e]{Institut f\"ur Mathematik, Freie Universit\"at Berlin, Arnimallee 2, 14195 Berlin, Germany}
\email{labbe@math.fu-berlin.de}
\urladdr{http://page.mi.fu-berlin.de/labbe}
\thanks{$^1$supported by by the DFG Collaborative Research Center TRR~109 ``Discretization in Geometry and Dynamics''.}
\thanks{$^2$partially supported by a FQRNT Doctoral scholarship.}

\author[C.~Lange]{Carsten E. M. C. Lange$^{1}$}
\address[C.E.M.C.~Lange]{Fakult\"at f\"ur Mathematik, Technische Universit\"at M\"unchen, D-85748, Garching, Germany}
\email{lange@ma.tum.de}
\urladdr{https://www-m10.ma.tum.de/bin/view/Lehrstuhl/CarstenLange}

\keywords{acyclic sets, enumeration, generalized permutahedra, pseudoline arrangements, sortable elements, Coxeter groups}
\subjclass[2010]{Primary 06D99; Secondary 05A05, 52B05}

%
%
%
%

\begin{document}

\begin{abstract}
We study the size of certain acyclic domains that arise from geometric and combinatorial constructions. These acyclic domains consist 
of all permutations visited by commuting equivalence classes of maximal reduced decompositions if we consider the symmetric group and, 
more generally, of all $c$-singletons of a Cambrian lattice associated to the weak order of a finite Coxeter group. For this reason, 
we call these sets \emph{Cambrian acyclic domains}. Extending a closed formula of Galambos--Reiner for a particular acyclic domain 
called Fishburn's alternating scheme, we provide explicit formulae for the size of any Cambrian acyclic domain and characterize the 
Cambrian acyclic domains of minimum or maximum size.

\end{abstract}

\maketitle

\section{Introduction}

Examples of $c$-singletons include certain acyclic domains in social choice theory, natural partial orders of crossings 
in pseudoline arrangements as well as certain vertices of particular convex polytopes called permutahedra and associahedra 
in discrete geometry. We first describe these objects and outline the relationship between these incarnations.

\medskip
Acyclic domains are of great interest in social choice theory because of their importance for the following voting process: 
voters choose among a given collection of linear orders on~$m$ candidates and the result of the ballot obeys the order imposed 
by the majority for each pair of candidates. As already mentioned by the Marquis de Condorcet in~1785~\cite{condorcet_1785}, 
not every collection of linear orders yields a transitive order on the candidates in every election. Collections that do 
guarantee transitivity are called \emph{acyclic domains} or \emph{Condorcet domains}. According to 
Fishburn~\cite[Introduction]{fishburn_acyclic_sets_a_progress_report}, the fundamental problem to determine the maximum 
cardinality of an acyclic domain for a given number of candidates is one of most fascinating and intractable combinatorial 
problems in social choice theory. Abello as well as Chameni-Nembua describe different constructions of ``large'' acyclic sets. 
They use maximal chains of the weak order on the symmetric group~$\sym_m$ \cite{abello_the_weak_order_1991} and study covering 
distributive sublattices of the weak order on~$\sym_m$ \cite{chameniNembua_regle_majoritaire_1989}.

\medskip
Galambos and~Reiner \cite{galambos_acyclic_2008} prove that maximal acyclic domains constructed by Abello coincide with those 
of Chameni-Nembua and describe them in terms of higher Bruhat orders. Moreover, they show that acyclic domains obtained from 
Fishburn's alternating scheme \cite{fishburn_acyclic_sets} are a special case of Chameni-Nembua's construction and prove that 
the cardinality of Fishburn's acyclic domain is given by
\begin{equation}\label{formula:fishburn}
	\mathsf{fb}(m)=
	2^{m-3}(m+3)-
	\begin{cases}
		\frac{m-1}{2}\binom{m-1}{\tfrac{m-1}{2}} & \text{ for odd $m$,}\\[3mm]
		\frac{2m-3}{2}\binom{m-2}{\tfrac{m-2}{2}} & \text{ for even $m$.}\\
	\end{cases}
\end{equation}
Weakening a conjecture of Fishburn \cite[Conjecture~2]{fishburn_acyclic_sets}, Galambos and Reiner conjecture that
$\mathsf{fb}(m)$ is a tight upper bound on the cardinality of acyclic sets described in terms of higher Bruhat 
orders~\cite[Conjecture~1]{galambos_acyclic_2008}. 
We notice that Knuth had a conjecture related to the one of Galambos and Reiner discussing his Equation~(9.8)~\cite[p.~39]{knuth_axioms_1992}.
Felsner and Valtr as well as Danilov, Karzanov and Koshevoy mention counterexamples to these conjectures~\cite{felsner_coding_2011,danilov_condorcet_2012}. 
Galambos and Reiner base the formula for $\mathsf{fb}(m)$ and their conjecture on 
counting extensions of a certain pseudoline arrangement by adding a new pseudoline and relate these extensions to elementarily 
equivalent maximal chains in the weak order on~$\sym_m$. 

\medskip
Planar pseudoline arrangements with contact points as well as pseudo- and multitriangulations are systematically studied by Pilaud and Pocchiola 
using the framework of \emph{networks}~\cite{pilaud_multitriangulations_2012}. Subsequently, Pilaud and Santos construct polytopes from a given 
network and relate their combinatorics to the combinatorics of triangulations of point configurations \cite{pilaud_brick_2012}. For well-chosen 
networks, they construct associahedra (or Stasheff polytopes) which essentially coincide with a family of realizations obtained from the 
permutahedron by Hohlweg and Lange~\cite{hohlweg_realizations_2007}. This family provides a geometric interpretation of Reading's Cambrian 
lattices \cite{reading_cambrian_2006}. Cambrian lattices are remarkable as they generalize the Tamari lattice as lattice quotient of the weak 
order on~$\sym_m$ in two ways. First, distinct lattice quotients are obtained by choosing different Coxeter elements~$c$ and yield distinct 
realizations of the associahedron from the permutahedron. Second, the construction of distinct lattice quotients extends from the symmetric 
group~$\sym_m$ to the weak order of any finite Coxeter group~$W$. Hohlweg, Lange and Thomas then identify \emph{$c$-singletons} as fundamental 
objects of Cambrian lattices and use them to derive distinct polytopal realizations of generalized associahedra from 
$W$-permutahedra~\cite{hohlweg_permutahedra_2011}. Generalized associahedra are CW-complexes defined in the context of cluster algebras of 
finite type \cite{fomin_y_systems_2003} that coincide with associahedra in type~$A$. Finally, Pilaud and Stump extend the construction of 
polytopes from Pilaud and Santos to any finite Coxeter group and, analogous to type~$A$, essentially reobtain realizations of generalized 
associahedra discovered by Hohlweg, Lange and Thomas \cite{pilaud_brick_2011}.

\medskip
Two interpretations of $c$-singletons described in~\cite{hohlweg_permutahedra_2011} are fundamental for our work. First, the geometric construction 
of generalized associahedra from $W$-permutahedra exhibits $c$-singletons as the common vertices of both polytopes and, second, $c$-singletons are 
combinatorially described as prefixes of a certain reduced expression for the longest element~$\wo\in W$ up to commutations. For Coxeter groups of 
type~$A$, the latter interpretation translates to higher Bruhat orders: the set of $c$-singletons for a fixed Coxeter element~$c\in \sym_m$ is 
precisely the set of all elements~$w\in \sym_m$ visited by the maximal chains contained in a certain equivalence class of elementarily equivalent 
maximal chains determined by~$c$. Galambos and Reiner showed in type~$A$ that these elements coincide with certain maximal acyclic domains and for 
this reason we define a Cambrian acyclic domain as the set of $c$-singletons for a given Coxeter element~$c$ of a finite Coxeter group~$W$. The 
main results of this article are
\begin{compactitem}
	\item Theorem~\ref{thm:singleton_cuts} that provides a combinatorial description for the cardinality of a Cambrian acyclic domain for any 
		finite Coxeter system $(W,S)$ and any Coxeter element~$c$.
	\item Theorem~\ref{thm:bounds_acyc_domain} that characterizes the possible choices of~$c$ to minimize and maximize the 
		cardinality of an Cambrian acyclic domain for any finite Coxeter system~$(W,S)$. 
\end{compactitem}  
These results solve Problem~3.1 of~\cite[Chapter~8]{mueller_associahedra_2012}. Even though we mentioned above that the conjecture of Galambos and 
Reiner is not true in general, Theorem~\ref{thm:bounds_acyc_domain} proves that the conjecture holds if it is restricted to the large subclass of 
acyclic domains: Fishburn's alternating scheme yields the maximum cardinality for Cambrian acyclic domains of type~$A$.  

\medskip
The article is organized as follows. In Section~\ref{sec:prelim}, we discuss the results in type~$A$. 
Sections~\ref{ssec:Assoc_and_singletons}--\ref{sec:counting_c_singletons_type_A} provide a unified description of Cambrian 
acyclic domains as geometric entities in terms of vertices of convex polytopes, as pseudoline arrangements, and as certain 
order ideals for type~$A$. Moreover, we derive formulae for the cardinality of Cambrian acyclic domains and give a new proof 
of Equation~\eqref{formula:fishburn} using hypergeometric series in Section~\ref{sec:counting_in_type_a}.
Section~\ref{sec:general} generalizes the discussion from type~$A$ to other finite types. We introduce and discuss relevant 
notions in Sections~\ref{subsec:nat_part_order}--\ref{ssec:crossingcutpaths} before proving Theorem~\ref{thm:singleton_cuts} 
in Section~\ref{subsec:enumerate_c_singletons}. More precisely, a poset called \emph{natural partial order} by Galambos and 
Reiner~\cite{galambos_acyclic_2008} as well as \emph{heap} by Viennot~\cite{viennot_1986} and Stembridge~\cite{stembridge_1996} 
is introduced in Section~\ref{subsec:nat_part_order}. In Section~\ref{ssec:equivalence_classes} we introduce $c$-singletons 
of a finite Coxeter system~$(W,S)$ and show that the weak order on $c$-singletons is isomorphic to the lattice of order ideals 
of a well-chosen natural partial order. In Section~\ref{subsec:2covers_cut_paths}, Hasse diagrams of natural partial orders 
are embedded in a cylindrical oriented graph that we call $2$-cover. The $2$-cover replaces the network used in type~$A$ as 
framework to count $c$-singletons in arbitrary type. The extension of a pseudoline arrangements considered by Galambos and 
Reiner in type~$A$ is replaced by cut paths introduced in Section~\ref{subsec:2covers_cut_paths}. It turns out that the total 
number of cut paths in the $2$-cover exceeds the size~$\mathsf S_c$ of Cambrian domains and the difference can be expressed in 
terms of ``crossing'' cut paths discussed in Section~\ref{ssec:cutpaths}. In Section~\ref{sec:examples}, we illustrate 
Theorem~\ref{thm:singleton_cuts}: we explicitly compute the cardinality of Cambrian acyclic domains for various finite types and 
different choices of Coxeter elements. In Section~\ref{sec:bounds}, we finally derive lower and upper bounds for the cardinality 
of Cambrian acyclic domains. The examples discussed in Section~\ref{sec:examples} cover all possibilities to minimize and 
maximize the size~$\mathsf S_c$ of Cambrian domains.

\medskip
We assume familiarity with basic notions of convex polytopes and of Coxeter group theory and refer to~\cite{ziegler_lectures_1995} as well as~\cite{humphreys_reflection_1992} for details.
 
\section{Associahedra, pseudoline arrangements and $c$-singletons in type~$A$}\label{sec:prelim}

\subsection{Associahedra and $c$-singletons}\label{ssec:Assoc_and_singletons}

An associahedron is a simple convex polytope of a particular combinatorial type. The underlying combinatorial structure relates to various 
branches of mathematics as mentioned in \cite{tamari_thesis_1951,mueller_associahedra_2012} or \cite{stasheff_homotopy_1963}. We follow Lee 
and consider triangulations of a convex $(n+3)$-gon to define the combinatorics of an \Dfn{$n$-dimensional associahedron}~\cite{lee_assoc_1989}. 
A plethora of distinct polytopal realizations is known for the associahedron, e.g. \cite{haiman_associahedron_1984, billera_secondary_1990,
gelfand_kapranov_zelevinsky_1994, hohlweg_realizations_2007, devadoss_realization_2009, pilaud_brick_2012}, we focus on a family of realizations 
described by Hohlweg and Lange \cite{hohlweg_realizations_2007, lange_using_2013} that generalizes~\cite{loday_realization_2004} and relates 
directly to triangulations of a labeled $(n+3)$-gon~$P$ and to the symmetric group~$\sym_{n+1}$. The resulting $n$-dimensional associahedra 
and the labelings of~$P$ are parametrized by Coxeter elements~$c\in \sym_{n+1}$. We refer to the labeled polygons as~$P_c$ and to the various 
polytopal realizations of associahedra as~$\Asso_c$.

Assume that~$P$ is a convex $(n+3)$-gon in the plane with no two vertices on a vertical line. To obtain the labeled polygon~$P_c$, we label 
the vertices of~$P$ from smallest to greatest $x$-coordinate using the integers~$0$ to~$n+2$. Without loss of generality, we assume that the 
vertices labeled $0$ and $n+2$ lie on a horizontal line. This induces a partition of $[n+1]:=\{1,2,\ldots, n+1\}$ into a down set 
$D_c = \{ d_1 <d_2 <\cdots <d_k \}$ and an up set $U_c = \{ u_1 < u_2 < \cdots <u_\ell\}$ where the vertices in~$U_c$ lie in the upper hull 
of~$P_c$ and the vertices in~$D_c$ in the lower hull. We have special notation in the following two situations. If $U_c=\varnothing$ then we 
replace the subscript~$c$ by~$Lod$ to remind of Loday who gave a combinatorial interpretation of the vertex coordinates of~$\Asso_{Lod}$ 
\cite{loday_realization_2004}. If $U_c=\set{d\in\N}{0<d< n+2\text{ and $d$ even}}$ then we replace the subscript~$c$ by~$alt$. This reminds 
of alternating (or bipartite) Coxeter elements and relates to Fishburn's alternating scheme. 

The labeled $(n+3)$-gons~$P_c$ are characterized by certain permutations~$\pi_c$ with one peak
which describe a relabeling to obtain~$P_c$ from~$P_{Lod}$, see Figure~\ref{fig:downup} for two examples.

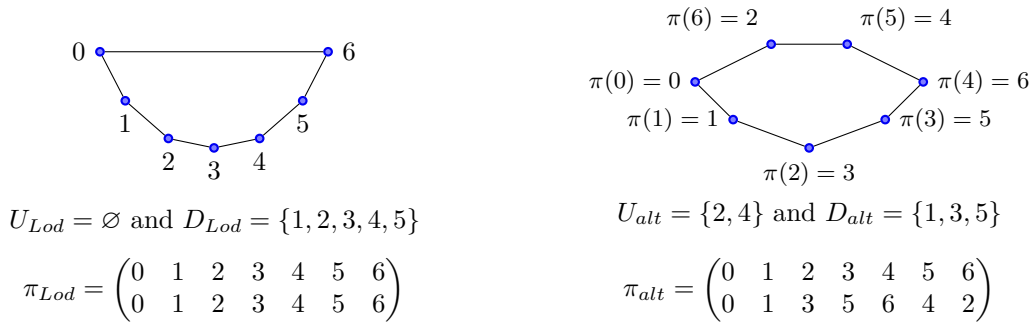
\begin{figure}[!htp]
\begin{center}
	\begin{tabular}{c@{\hspace{2cm}}c}
	\begin{tikzpicture}[spec/.style={gray},
	pointille/.style={dashed},
	axe/.style={color=black, very thick},
	sommet/.style={inner sep=1pt,circle,draw=blue!95!black,fill=blue!50,thick,anchor=base}]

	\node[sommet, label=left:$0$] (0) at (0,0.8/2) {};
	\node[sommet, label=below:$1$] (1) at (1/3,-1/4) {};
	\node[sommet, label=below:$2$] (2) at (1.8/2,-1.5/2) {};
	\node[sommet, label=below:$3$] (3) at (3/2,-1.75/2) {};
	\node[sommet, label=below:$4$] (4) at (4.2/2,-1.5/2) {};
	\node[sommet, label=below:$5$] (5) at (16/6,-1/4) {};
	\node[sommet, label=right:$6$] (6) at (6/2,0.8/2) {};
	
	\draw (0) -- (1) -- (2) -- (3) -- (4) -- (5) -- (6) -- (0);
	
	\node at (1.5,-1.85) {$U_{Lod}=\varnothing$ and $D_{Lod}=\{1,2,3,4,5\}$};
	\node at (1.5,-2.75) {$\pi_{Lod}=\left(\begin{matrix}
										0 & 1 & 2 & 3 & 4 & 5 & 6\\
										0 & 1 & 2 & 3 & 4 & 5 & 6
									 \end{matrix}\right)$};
	\end{tikzpicture}
	
	&
	
	\begin{tikzpicture}[spec/.style={gray},
	pointille/.style={dashed},
	axe/.style={color=black, very thick},
	sommet/.style={inner sep=1pt,circle,draw=blue!95!black,fill=blue!50,thick,anchor=base}]

	\node[sommet, label=left:{\small$\pi(0)=0$}] (0) at (0,0) {};
	\node[sommet, label=left:{\small$\pi(1)=1$}] (1) at (1/2,-1/2) {};
	\node[sommet, label=above left:{\small$\pi(6)=2$}] (2) at (2/2,1/2) {};
	\node[sommet, label=below:{\small$\pi(2)=3$}] (3) at (3/2,-1.75/2) {};
	\node[sommet, label=above right:{\small$\pi(5)=4$}] (4) at (4/2,1/2) {};
	\node[sommet, label=right:{\small$\pi(3)=5$}] (5) at (5/2,-1/2) {};
	\node[sommet, label=right:{\small$\pi(4)=6$}] (6) at (6/2,0) {};

	\draw (0) -- (1) -- (3) -- (5) -- (6) -- (4) -- (2) -- (0);
	
	\node at (1.5,-1.75) {$U_{alt}=\{2,4\}$ and $D_{alt}=\{1,3,5\}$};
	\node at (1.5,-2.75) {$\pi_{alt}=\left(\begin{matrix}
										0 & 1 & 2 & 3 & 4 & 5 & 6\\
										0 & 1 & 3 & 5 & 6 & 4 & 2
									 \end{matrix}\right)$};		
	\end{tikzpicture}	
\end{tabular}
	\caption{\label{fig:downup} Two examples of labeled heptagons $P_c$.}
\end{center}
\end{figure}

Moreover, the set of labeled $(n+3)$-gons~$P_c$ is in bijection with orientations of Coxeter graphs of type~$A$ and all Coxeter elements~$c$ 
of~$\sym_{n+1}$ \cite{shi_enumeration_1997}. This bijection is crucial to extend the construction of associahedra to generalized associahedra 
for arbitrary finite Coxeter groups \cite{hohlweg_permutahedra_2011}.

Any proper diagonal~$\delta$ of~$P_c$ yields a facet-defining inequality $H^\delta_\geq$ for~$\Asso_c$ as follows. Let~$B_\delta$ be the label 
set of vertices of $P_c$ which lie strictly below the line supporting $\delta$ and include the endpoints of $\delta$ which are in $U_c$ and set 
$H^\delta_\geq := \set{x\in \R^{n+1}}{\sum_{i\in B_\delta}x_i \geq {{n+1}\choose{|B_\delta|}}}$.

\begin{theorem}[{\cite[Proposition~1.3]{hohlweg_realizations_2007},\cite[Corollary~7]{lange_using_2013}}]\hfill\break
	For every labeled $(n+3)$-gon~$P_c$, the polytope
	\[
		\Asso_c = \set{x\in\R^{n+1}}
						{\begin{matrix}
							\sum_{i\in [n+1]} x_i = \frac{(n+1)(n+2)}{2} \text{ and}\\[2mm]
							x\in H^\delta_\geq \text{ for any proper diagonal $\delta$ of $P_c$}
						\end{matrix}}
	\]
	is a particular realization of an $n$-dimensional associahedron.
\end{theorem}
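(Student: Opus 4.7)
The plan is to prove the theorem by constructing an explicit bijection between triangulations of $P_c$ and vertices of $\Asso_c$, and then verifying that this bijection carries diagonal flips to edges of $\Asso_c$. Since Lee showed that the flip graph of triangulations of a convex $(n+3)$-gon is the $1$-skeleton of an $n$-dimensional associahedron, this combinatorial identification suffices to realize $\Asso_c$ as such a polytope. The guiding observation is that $\Asso_c$ lives in the same affine hyperplane as the Hohlweg-Lange permutahedron $\Perm_{n+1}$ and is obtained from it by selectively discarding facet-defining inequalities, keeping only those of the form $H^\delta_\geq$ indexed by proper diagonals of $P_c$.

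First, for each triangulation $T$ of $P_c$ I would define a candidate vertex $x(T)\in\R^{n+1}$ whose coordinate $x_i(T)$ depends only on the triangles of $T$ incident to vertex~$i$ of $P_c$, given by a piecewise rule according to whether $i\in D_c$ or $i\in U_c$; this generalizes Loday's product formula in the case $P_{Lod}$. Second, I would verify by a double count over the triangles of $T$ that $\sum_i x_i(T)=\binom{n+2}{2}$, placing $x(T)$ in the ambient hyperplane. Third, I would check that $x(T)\in H^\delta_\geq$ for every proper diagonal $\delta$, with equality precisely when $\delta\in T$; tightness at the $n$ diagonals of $T$ then gives $n$ linearly independent equations (the sets $B_\delta$ for $\delta\in T$ form a laminar family), so $x(T)$ is a vertex of $\Asso_c$. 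Finally, a direct computation that a diagonal flip $T\leftrightarrow T'$ perturbs only the two coordinates at the apex vertices of the flipped quadrilateral shows that $x(T)$ and $x(T')$ are joined by an edge of $\Asso_c$, matching the flip adjacency of the associahedron; a count against the Catalan number of triangulations rules out extra vertices.

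The main obstacle is the third step: verifying $x(T)\in H^\delta_\geq$ for every proper diagonal $\delta$, including those absent from~$T$. Tightness for $\delta\in T$ is read off the definition, but for $\delta\notin T$ one must establish a strict inequality whose content depends sensitively on how the endpoints of $\delta$ and of the diagonals of $T$ crossing $\delta$ are distributed between $D_c$ and $U_c$. A uniform approach is to induct on the number of diagonals of $T$ crossing $\delta$: each flip that strictly reduces this crossing number should be shown to weakly decrease the quantity $\sum_{i\in B_\delta} x_i(T)$, reducing every case to the base case $\delta\in T$. Proving this monotonicity under flips, uniformly in the labelling $\pi_c$, is the technical heart of the argument and explains the substantial case analysis carried out in \cite{hohlweg_realizations_2007,lange_using_2013}.
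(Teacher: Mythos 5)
This theorem is quoted, not proved, in the paper: it is imported from \cite[Proposition~1.3]{hohlweg_realizations_2007} and \cite[Corollary~7]{lange_using_2013}, so your sketch can only be measured against those sources, whose overall strategy (a point $x(T)$ per triangulation, tightness of $H^\delta_\geq$ exactly at the diagonals $\delta\in T$, flips realizing edges) you do follow. As it stands, however, the proposal is not a proof: the step you yourself single out as the technical heart, namely the strict inequality $\sum_{i\in B_\delta}x_i(T)>\binom{n+1}{|B_\delta|}$ for every proper diagonal $\delta\notin T$, is exactly the substance of the cited results, and the flip-monotonicity you propose to establish it is only announced, not proved --- nothing in your text shows that a flip reducing the number of diagonals of $T$ crossing $\delta$ weakly decreases $\sum_{i\in B_\delta}x_i(T)$, uniformly in the partition $D_c\cup U_c$. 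Until that lemma is supplied, the argument reduces the theorem to itself.

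Two of your auxiliary claims also need repair. First, the sets $B_\delta$ for $\delta\in T$ do \emph{not} form a laminar family once $U_c\neq\varnothing$: in the heptagon $P_{alt}$ of Figure~\ref{fig:downup} (so $n=4$, $U_{alt}=\{2,4\}$, $D_{alt}=\{1,3,5\}$) the diagonals $\delta=\{1,4\}$ and $\delta'=\{4,5\}$ are noncrossing, hence lie in a common triangulation, yet $B_\delta=\{3,4,5\}$ and $B_{\delta'}=\{1,2,3,4\}$ (as subsets of $[n+1]=[5]$) overlap without being nested. The tight normals $\chi_{B_\delta}$, $\delta\in T$, are indeed linearly independent, but that must be argued differently; laminarity is special to the Loday-type labeling $U_c=\varnothing$. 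Second, ``a count against the Catalan number of triangulations rules out extra vertices'' is circular, since the number of vertices of $\Asso_c$ is precisely what is not yet known; the standard way to finish (and essentially what the cited proofs do) is to note that each $x(T)$ is a simple vertex all of whose $n$ incident edges are realized by flips and hence end at points $x(T')$, so the set $\{x(T)\}$ is closed under walking along edges of the polytope and, the graph of a polytope being connected, exhausts its vertex set.
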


Each associahedron~$\Asso_c$ is an instance of a generalized permutahedron, introduced by Postnikov,  as it is obtained from the classical permutahedron
\begin{align*}
	\Perm_n &= \conv\set{\left(\pi(1),\dots,\pi(n+1)\right)^\top\in\R^{n+1}}{\pi\in \sym_{n+1}}\\
			&= \set{x\in\R^{n+1}}
					{\begin{matrix}
						\sum_{i\in [n+1]} x_i = \frac{(n+1)(n+2)}{2} \text{ and}\\[2mm]
						\sum_{i\in I}x_i \geq {{n+1}\choose{|I|}} \text{ for any nonempty $I\subset [n+1]$}
					\end{matrix}}
\end{align*}
by discarding some facet-defining inequalities \cite{postnikov_faces_2008,postnikov_permutahedra_2009}. Following~\cite[Section~2.3]{hohlweg_permutahedra_2011}, 
a \Dfn{$c$-singleton} is a common vertex of~$\Perm_n$ and of~$\Asso_c$. 

\begin{figure}[!htbp]
\begin{center}
	\input{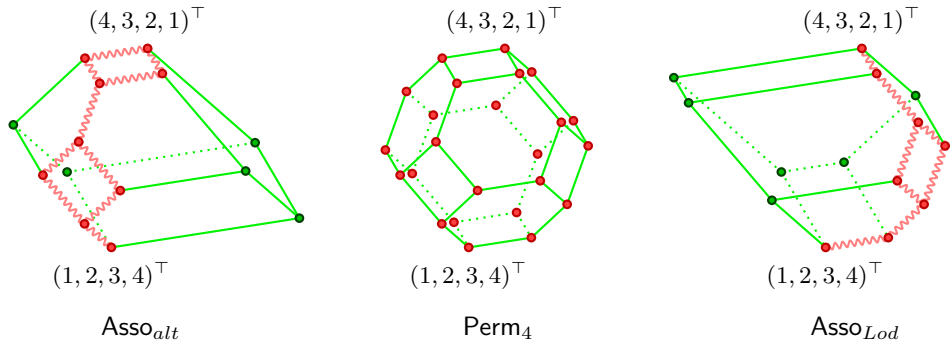}
	\caption{\label{fig:asso} The permutahedron~$\Perm_4$ and the associahedra $\Asso_{alt}$ and $\Asso_{Lod}$. Red vertices of the associahedra indicate $c$-singletons and maximal paths from $(1,2,3,4)^\top$ to $(4,3,2,1)^\top$ along red zig-zag edges of associahedra correspond to elementarily equivalent maximal chains in the weak order. }
\end{center}
\end{figure}

From Figure~\ref{fig:asso}, where the $3$-dimensional polytopes~$\Perm_4$, $\Asso_{Lod}$ and $\Asso_{alt}$ are shown, it is immediate that the 
number of $c$-singletons as well as the number of paths from $(1,2,3,4)^\top$ to $(4,3,2,1)^\top$ in the $1$-skeleton of~$\Asso_c$ visiting only 
$c$-singletons depends on~$c$. For later use, we remark that the realization of~$\Asso_c$ is completely determined by $U_c \cap \{2,3,\ldots,n+1\}$
and assume without loss of generality
\[
	D_c = \{ d_1 =1 < d_2 < \cdots < d_k \}
	\qquad\text{and}\qquad
	U_c = \{ u_1 < u_2 < \cdots <u_\ell\}.
\]

\subsection{Pseudoline arrangements and $c$-singletons}

Using the duality of points and lines in the Euclidean plane, we now describe~$\Asso_c$ and $c$-singletons in terms of pseudoline arrangements, 
see \cite{pilaud_multitriangulations_2012} and \cite{pilaud_brick_2012} for details.  We visualize pseudoline arrangements on an alternating and 
sorting  network~$\mathcal N_c$ that encodes the combinatorics of the point configuration of~$P_c$: $\mathcal N_c$ consists of $n+3$ horizontal 
lines and $\binom{n+3}{2}=\tfrac{(n+3)(n+2)}{2}$ \Dfn{commutators} which are vertical line segments connecting consecutive horizontal lines in 
an alternating way. A commutator is at \Dfn{level}~$i$ if it connects the horizontal lines~$i$ and~$i+1$ of~$\mathcal N_c$ (counted from bottom 
to top starting with~$0$). Additionally, we label the ends of horizontal lines at the left end of~$\mathcal N_c$ from~$0$ to~$n+2$ bottom to top 
and from~$0$ to~$n+2$ top to bottom at the right end. Figure~\ref{fig:networks} illustrates these notions for~$\mathcal N_{Lod}$ and~$\mathcal N_{alt}$ 
which correspond to~$P_{Lod}$ and~$P_{alt}$ of Figure~\ref{fig:downup}. 

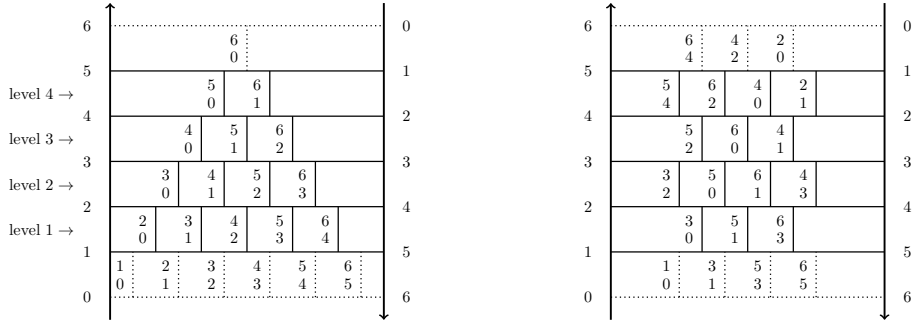
\begin{figure}[!htbp]
\begin{center}
	\begin{tabular}{c@{\hspace{2cm}}c}
	\scalebox{0.6}{
	\begin{tikzpicture}[spec/.style={dotted}]
	
	\draw[thick,spec] (0,-0.5) -- (6,-0.5);
	\draw[thick] (0,0.5) -- (6,0.5);
	\draw[thick] (0,1.5) -- (6,1.5);
	\draw[thick] (0,2.5) -- (6,2.5);
	\draw[thick] (0,3.5) -- (6,3.5);
	\draw[thick] (0,4.5) -- (6,4.5);
	\draw[thick,spec] (0,5.5) -- (6,5.5);
	
	\node at (-1.5,1) {level $1\rightarrow$};
	\node at (-1.5,2) {level $2\rightarrow$};
	\node at (-1.5,3) {level $3\rightarrow$};
	\node at (-1.5,4) {level $4\rightarrow$};

	\node at (-0.5,-0.5) {0};	
	\node at (-0.5,0.5) {1};
	\node at (-0.5,1.5) {2};
	\node at (-0.5,2.5) {3};
	\node at (-0.5,3.5) {4};
	\node at (-0.5,4.5) {5};
	\node at (-0.5,5.5) {6};

	\node at (6.5,-0.5) {6};
	\node at (6.5,0.5) {5};
	\node at (6.5,1.5) {4};
	\node at (6.5,2.5) {3};
	\node at (6.5,3.5) {2};
	\node at (6.5,4.5) {1};
	\node at (6.5,5.5) {0};

	\draw[thick,spec] (0.5,-0.5) node[label={[label distance=-0.1cm]135:$0$}]  {} -- (0.5,0.5) node[label={[label distance=-0.1cm]225:$1$}]  {};
	\draw[thick] (1,0.5) node[label={[label distance=-0.1cm]135:$0$}]  {} -- (1,1.5) node[label={[label distance=-0.1cm]225:$2$}]  {};
	\draw[thick] (1.5,1.5) node[label={[label distance=-0.1cm]135:$0$}]  {} -- (1.5,2.5) node[label={[label distance=-0.1cm]225:$3$}]  {};
	\draw[thick] (2,2.5) node[label={[label distance=-0.1cm]135:$0$}]  {} -- (2,3.5) node[label={[label distance=-0.1cm]225:$4$}]  {};
	\draw[thick] (2.5,3.5) node[label={[label distance=-0.1cm]135:$0$}]  {} -- (2.5,4.5) node[label={[label distance=-0.1cm]225:$5$}]  {};
	\draw[thick,spec] (3,4.5) node[label={[label distance=-0.1cm]135:$0$}]  {} -- (3,5.5) node[label={[label distance=-0.1cm]225:$6$}]  {};

	\draw[thick,spec] (1.5,-0.5) node[label={[label distance=-0.1cm]135:$1$}]  {} -- (1.5,0.5) node[label={[label distance=-0.1cm]225:$2$}]  {};
	\draw[thick] (2,0.5) node[label={[label distance=-0.1cm]135:$1$}]  {} -- (2,1.5) node[label={[label distance=-0.1cm]225:$3$}]  {};
	\draw[thick] (2.5,1.5) node[label={[label distance=-0.1cm]135:$1$}]  {} -- (2.5,2.5) node[label={[label distance=-0.1cm]225:$4$}]  {};
	\draw[thick] (3,2.5) node[label={[label distance=-0.1cm]135:$1$}]  {} -- (3,3.5) node[label={[label distance=-0.1cm]225:$5$}]  {};
	\draw[thick] (3.5,3.5) node[label={[label distance=-0.1cm]135:$1$}]  {} -- (3.5,4.5) node[label={[label distance=-0.1cm]225:$6$}]  {};

	\draw[thick,spec] (2.5,-0.5) node[label={[label distance=-0.1cm]135:$2$}]  {} -- (2.5,0.5) node[label={[label distance=-0.1cm]225:$3$}]  {};
	\draw[thick] (3,0.5) node[label={[label distance=-0.1cm]135:$2$}]  {} -- (3,1.5) node[label={[label distance=-0.1cm]225:$4$}]  {};
	\draw[thick] (3.5,1.5) node[label={[label distance=-0.1cm]135:$2$}]  {} -- (3.5,2.5) node[label={[label distance=-0.1cm]225:$5$}]  {};
	\draw[thick] (4,2.5) node[label={[label distance=-0.1cm]135:$2$}]  {} -- (4,3.5) node[label={[label distance=-0.1cm]225:$6$}]  {};

	\draw[thick,spec] (3.5,-0.5) node[label={[label distance=-0.1cm]135:$3$}]  {} -- (3.5,0.5) node[label={[label distance=-0.1cm]225:$4$}]  {};
	\draw[thick] (4,0.5) node[label={[label distance=-0.1cm]135:$3$}]  {} -- (4,1.5) node[label={[label distance=-0.1cm]225:$5$}]  {};
	\draw[thick] (4.5,1.5) node[label={[label distance=-0.1cm]135:$3$}]  {} -- (4.5,2.5) node[label={[label distance=-0.1cm]225:$6$}]  {};

	\draw[thick,spec] (4.5,-0.5) node[label={[label distance=-0.1cm]135:$4$}]  {} -- (4.5,0.5) node[label={[label distance=-0.1cm]225:$5$}]  {};
	\draw[thick] (5,0.5) node[label={[label distance=-0.1cm]135:$4$}]  {} -- (5,1.5) node[label={[label distance=-0.1cm]225:$6$}]  {};

	\draw[thick,spec] (5.5,-0.5) node[label={[label distance=-0.1cm]135:$5$}]  {} -- (5.5,0.5) node[label={[label distance=-0.1cm]225:$6$}]  {};
		
	\draw[very thick,->] (0,-1) -> (0,6) {};
	\draw[very thick,<-] (6,-1) -> (6,6) {};
		
	\end{tikzpicture}
	}
	
	&
	
	\scalebox{0.6}{
	\begin{tikzpicture}[spec/.style={dotted}]
	
	\draw[thick,spec] (0,-0.5) -- (6,-0.5);
	\draw[thick] (0,0.5) -- (6,0.5);
	\draw[thick] (0,1.5) -- (6,1.5);
	\draw[thick] (0,2.5) -- (6,2.5);
	\draw[thick] (0,3.5) -- (6,3.5);
	\draw[thick] (0,4.5) -- (6,4.5);
	\draw[thick,spec] (0,5.5) -- (6,5.5);

	\node at (-0.5,-0.5) {0};	
	\node at (-0.5,0.5) {1};
	\node at (-0.5,1.5) {2};
	\node at (-0.5,2.5) {3};
	\node at (-0.5,3.5) {4};
	\node at (-0.5,4.5) {5};
	\node at (-0.5,5.5) {6};

	\node at (6.5,-0.5) {6};
	\node at (6.5,0.5) {5};
	\node at (6.5,1.5) {4};
	\node at (6.5,2.5) {3};
	\node at (6.5,3.5) {2};
	\node at (6.5,4.5) {1};
	\node at (6.5,5.5) {0};

	\draw[thick,spec] (1.5,-0.5) node[label={[label distance=-0.1cm]135:$0$}]  {} -- (1.5,0.5) node[label={[label distance=-0.1cm]225:$1$}]  {};
	\draw[thick] (1.5,1.5) node[label={[label distance=-0.1cm]135:$2$}]  {} -- (1.5,2.5) node[label={[label distance=-0.1cm]225:$3$}]  {};
	\draw[thick] (1.5,3.5) node[label={[label distance=-0.1cm]135:$4$}]  {} -- (1.5,4.5) node[label={[label distance=-0.1cm]225:$5$}]  {};

	\draw[thick] (2,0.5) node[label={[label distance=-0.1cm]135:$0$}]  {} -- (2,1.5) node[label={[label distance=-0.1cm]225:$3$}]  {};
	\draw[thick] (2,2.5) node[label={[label distance=-0.1cm]135:$2$}]  {} -- (2,3.5) node[label={[label distance=-0.1cm]225:$5$}]  {};
	\draw[thick,spec] (2,4.5) node[label={[label distance=-0.1cm]135:$4$}]  {} -- (2,5.5) node[label={[label distance=-0.1cm]225:$6$}]  {};

	\draw[thick,spec] (2.5,-0.5) node[label={[label distance=-0.1cm]135:$1$}]  {} -- (2.5,0.5) node[label={[label distance=-0.1cm]225:$3$}]  {};
	\draw[thick] (2.5,1.5) node[label={[label distance=-0.1cm]135:$0$}]  {} -- (2.5,2.5) node[label={[label distance=-0.1cm]225:$5$}]  {};
	\draw[thick] (2.5,3.5) node[label={[label distance=-0.1cm]135:$2$}]  {} -- (2.5,4.5) node[label={[label distance=-0.1cm]225:$6$}]  {};

	\draw[thick] (3,0.5) node[label={[label distance=-0.1cm]135:$1$}]  {} -- (3,1.5) node[label={[label distance=-0.1cm]225:$5$}]  {};
	\draw[thick] (3,2.5) node[label={[label distance=-0.1cm]135:$0$}]  {} -- (3,3.5) node[label={[label distance=-0.1cm]225:$6$}]  {};
	\draw[thick,spec] (3,4.5) node[label={[label distance=-0.1cm]135:$2$}]  {} -- (3,5.5) node[label={[label distance=-0.1cm]225:$4$}]  {};
			
	\draw[thick,spec] (3.5,-0.5) node[label={[label distance=-0.1cm]135:$3$}]  {} -- (3.5,0.5) node[label={[label distance=-0.1cm]225:$5$}]  {};
	\draw[thick] (3.5,1.5) node[label={[label distance=-0.1cm]135:$1$}]  {} -- (3.5,2.5) node[label={[label distance=-0.1cm]225:$6$}]  {};
	\draw[thick] (3.5,3.5) node[label={[label distance=-0.1cm]135:$0$}]  {} -- (3.5,4.5) node[label={[label distance=-0.1cm]225:$4$}]  {};

	\draw[thick] (4,0.5) node[label={[label distance=-0.1cm]135:$3$}]  {} -- (4,1.5) node[label={[label distance=-0.1cm]225:$6$}]  {};
	\draw[thick] (4,2.5) node[label={[label distance=-0.1cm]135:$1$}]  {} -- (4,3.5) node[label={[label distance=-0.1cm]225:$4$}]  {};
	\draw[thick,spec] (4,4.5) node[label={[label distance=-0.1cm]135:$0$}]  {} -- (4,5.5) node[label={[label distance=-0.1cm]225:$2$}]  {};

	\draw[thick,spec] (4.5,-0.5) node[label={[label distance=-0.1cm]135:$5$}]  {} -- (4.5,0.5) node[label={[label distance=-0.1cm]225:$6$}]  {};
	\draw[thick] (4.5,1.5) node[label={[label distance=-0.1cm]135:$3$}]  {} -- (4.5,2.5) node[label={[label distance=-0.1cm]225:$4$}]  {};
	\draw[thick] (4.5,3.5) node[label={[label distance=-0.1cm]135:$1$}]  {} -- (4.5,4.5) node[label={[label distance=-0.1cm]225:$2$}]  {};

	\draw[very thick,->] (0,-1) -> (0,6) {};
	\draw[very thick,<-] (6,-1) -> (6,6) {};
		
	\end{tikzpicture}
	}
\end{tabular}
	\caption{\label{fig:networks} Two networks for pseudoline arrangements with labeled commutators:~$\mathcal N_{Lod}$ (left) and~$\mathcal N_{alt}$ (right) correspond to the labeled heptagons of Figure~\ref{fig:downup}. The $1$-kernels~$\mathcal K_{Lod}$ and~$\mathcal K_{alt}$ are obtained by deletion of the dotted line segments.}
\end{center}
\end{figure}

Now, pseudoline~$i$ (supported by~$\mathcal N_c$) is an abscissa monotone path on $\mathcal N_c$ starting on the left at label~$i$ and ending 
right at label~$i$ and a pseudoline arrangement on $\mathcal N_c$ is a collection of pseudolines such that any pair intersects precisely along 
one commutator called \Dfn{crossing}. A commutator that is touched by two pseudolines and traversed by none is called \Dfn{contact}. There is a 
unique pseudoline arrangement with $n+3$ pseudolines on $\mathcal N_c$ which induces a labeling of all commutators by the two unique pseudolines 
that traverse along it, see  Figure~\ref{fig:networks}. The reader may prove the following facts:
\begin{compactenum}[i)]
	\item labeled commutators of~$\mathcal N_c$ at levels~$0$ and~$n+1$ are in bijection to boundary diagonals of~$P_c$.
	\item $\mathcal N_c$ is determined by~$\mathcal N_{Lod}$ and~$\pi_c$. The inversions of~$\pi_c$ 
		label commutators in the bottom right of~$\mathcal N_{Lod}$ which can be moved to the upper left part 
		if we temporarily consider~$\mathcal N_{Lod}$ as M\"obius strip by identifying its sides. Relabeling 
		the commutators by $\pi_c$ yields~$\mathcal N_c$.
\end{compactenum}

The \Dfn{$1$-kernel}~$\mathcal K_c$ of the network~$\mathcal N_c$ is the network obtained from~$\mathcal N_c$ by deletion of the horizontal 
lines~$0$ and~$n+2$ as well as all commutators touching these lines. On~$\mathcal K_c$, we use notions induced by~$\mathcal N_c$, for example, 
the level of a commutator or its label are inherited from~$\mathcal N_c$. Triangulations of~$P_c$ are now in bijection to pseudoline arrangements 
with $n+1$ pseudolines supported by~$\mathcal K_c$: diagonals of a triangulation correspond to the contacts of a unique pseudoline arrangement 
on~$\mathcal K_c$ (\cite[Theorem~23]{pilaud_multitriangulations_2012}). The simple fact that a commutator labeled by the endpoints of a proper 
diagonal~$\delta$ of~$P_c$ is at level~$|B_\delta|$ extends~\cite[Proposition~1.4]{hohlweg_realizations_2007} and~\cite[Proposition~20]{lange_using_2013} 
by statement~iii) below:

\begin{proposition}
	\label{prop:singleton}
	Let $v$ be a vertex of $\Asso_c$ with corresponding triangulation~$T_v$ of~$P_c$ and let~$C_v$ be the set of commutators of~$\mathcal N_c$ labeled by proper diagonals of $T_v$. The following statements are equivalent:
	\begin{compactenum}[i)]
		\item $v$ is a vertex of $\Perm_n$.
		\item The proper diagonals $\delta_i$ of $T_v$ can be ordered such that 
				$\varnothing \subset B_{\delta_1} \subset \ldots \subset B_{\delta_{n}} \subset [n+1]$.
		\item $C_v$ contains one commutator from each level of~$\mathcal K_c$ and commutators from consecutive levels are adjacent.
	\end{compactenum}
\end{proposition}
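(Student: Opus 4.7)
The plan is to prove a cycle of implications, leveraging two facts already established in the text: $\Asso_c$ is obtained from $\Perm_n$ by discarding facet-defining inequalities (it is a generalized permutahedron in the sense of Postnikov), and the commutator of $\mathcal{N}_c$ labeled by a proper diagonal $\delta$ lies at level $|B_\delta|$.

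For (i) $\Leftrightarrow$ (ii), if $v$ is a common vertex of $\Perm_n$ and $\Asso_c$, then the $n$ tight facet-inequalities of $\Asso_c$ at $v$ (one per proper diagonal of $T_v$, since $v$ is a simple vertex of the $n$-dimensional $\Asso_c$) are also tight inequalities of $\Perm_n$. Because the tight inequalities at any vertex of $\Perm_n$ are indexed by a complete flag $\varnothing \subsetneq I_1 \subsetneq \cdots \subsetneq I_n \subsetneq [n+1]$, the $n$ sets $B_{\delta_i}$ must constitute such a flag, yielding (ii). Conversely, given a chain $\varnothing \subsetneq B_{\delta_1} \subsetneq \cdots \subsetneq B_{\delta_n} \subsetneq [n+1]$, the unique point of $\R^{n+1}$ satisfying the affine equation for $\Perm_n$ together with equality on these $n$ facet-inequalities is a permutation vertex of $\Perm_n$ indexed by the flag, and this point coincides with $v$ by uniqueness of the vertex of $\Asso_c$ cut out by $T_v$.

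The new content lies in (ii) $\Leftrightarrow$ (iii). Since the commutator labeled by $\delta$ sits at level $|B_\delta|$, and $\mathcal{K}_c$ has exactly the levels $1, 2, \dots, n$, the condition that $C_v$ contains one commutator per level translates to $\{|B_{\delta_1}|, \dots, |B_{\delta_n}|\} = \{1, \dots, n\}$, matching the size sequence forced by a flag in (ii). Under this size condition I would prove that adjacency of the commutators at levels $k$ and $k+1$ is equivalent to $B_{\delta_k} \subsetneq B_{\delta_{k+1}}$. In the forward direction, adjacency means that the two commutators share a segment of a pseudoline on horizontal line $k+1$, forcing $\delta_k$ and $\delta_{k+1}$ to share a vertex of $P_c$; combined with $|B_{\delta_{k+1}}| = |B_{\delta_k}| + 1$, this pins down $B_{\delta_{k+1}} = B_{\delta_k} \cup \{x\}$ for the unique additional vertex $x$ of $P_c$. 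For the converse, the single extra vertex trapped between $\delta_k$ and $\delta_{k+1}$ forces them to bound a common triangle of $T_v$ with a shared endpoint, and the two commutators then meet on horizontal line $k+1$ in $\mathcal{K}_c$.

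The main obstacle is this last translation, transferring set containment into network adjacency. The subtlety is the asymmetric convention defining $B_\delta$ (vertices strictly below $\delta$, together with endpoints of $\delta$ lying in $U_c$), which compels a short case analysis depending on whether the extra vertex and the shared endpoint belong to $U_c$ or $D_c$; tracking how the two pseudolines labeling a commutator enter and leave horizontal line $k+1$ resolves each case uniformly.
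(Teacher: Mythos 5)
Your proposal is correct and takes essentially the same route as the paper, which in fact prints no proof of Proposition~\ref{prop:singleton}: it defers the equivalence of i) and ii) to \cite[Proposition~1.4]{hohlweg_realizations_2007} and \cite[Proposition~20]{lange_using_2013} (whose content is precisely your flag argument for the common vertices of $\Perm_n$ and $\Asso_c$) and obtains iii) from the fact that the commutator labeled by $\delta$ sits at level $|B_\delta|$, together with the adjacency observation you spell out. One caution for the write-up: a shared endpoint plus $|B_{\delta_{k+1}}|=|B_{\delta_k}|+1$ does not by itself force $B_{\delta_k}\subset B_{\delta_{k+1}}$ (two diagonals can share an endpoint with consecutive levels yet lie on opposite sides of it), so your case analysis must genuinely use the relative position of the two commutators along line $k+1$ — i.e. which pseudoline is shared and on which side the ascent occurs — exactly as you indicate in your final paragraph.
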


Proposition~\ref{prop:singleton} shows that a $c$-singleton for~$\Asso_c$ corresponds to a path which traverses the $1$-kernel~$\mathcal K_c$ 
from bottom to top and ascents whenever possible. We call such a path a \Dfn{greedy ordinate monotone path} on~$\mathcal K_c$. In the theory of 
pseudoline arrangements, a greedy ordinate monotone path is known as a pseudoline from the south pole to the north pole which extends the original 
arrangement by a new pseudoline. 

\subsection{Order ideals and $c$-singletons}\label{sec:counting_c_singletons_type_A}

It is possible to give another description of $c$-singletons that uses neighbouring transpositions $s_i=(i\ \ i+1)$, $1\leq i\leq n$, if we 
combine Proposition~\ref{prop:singleton} with~\cite[Theorem~2.2]{hohlweg_permutahedra_2011}: a permutation $\pi\in\Sigma_{n+1}$ is a $c$-singleton 
of~$\Asso_c$ if and only if there is a reduced word for~$\pi$ in $s_1,\ldots,s_n$ that is a prefix up to commutation of a particular reduced 
expression~$\bwoc$ of the reverse permutation~$\wo=[n+1, n, \ldots, 1]$ (given here in one-line notation). Although this point of view will be 
used in Section~\ref{sec:general} to define $c$-singletons for arbitrary irreducible finite Coxeter systems~$(W,S)$, we  directly describe prefixes 
up to commutations of~$\bwoc$ in type~$A$ using order ideals of a poset $(\mathcal S,\prec_c)$ associated to~$\mathcal K_c$. 

Let~$\mathcal S$ be the set that contains one copy of the transposition $s_i$ for each bounded region of~$\mathcal K_c$ at level~$i$ and distinguish 
copies of the same transposition by their associated region. Define the partial order~$\prec_{c}$ on $\mathcal S$ as the transitive closure of the 
covering relation $s_i\rightarrow s_{j}$ that satisfies
\begin{compactenum}[i)]
	\item $|i-j|=1$;
	\item the bounded regions associated to~$s_i$ and~$s_j$ intersect in a (nonempty) horizontal line segment;
	\item the commutator bounding the region of~$s_i$ to the left is left of the region associated to~$s_{j}$.
\end{compactenum} 

Two Hasse diagrams for~$(\mathcal S, \prec_{c})$ associated to~$\mathcal K_{alt}$ are illustrated in Figure~\ref{fig:dualpicture}. Each greedy ordinate 
monotone path~$\frak p$ in~$\mathcal K_c$ is a cut of the Hasse diagram~$(\mathcal S, \prec_{c})$ that partitions the vertex set~$\mathcal S$ of this 
oriented graph in two sets and the set~$\mathcal S_{\frak p}\subseteq \mathcal S$ below~$\frak p$ is an order ideal of~$(\mathcal S, \prec_{c})$. 
Therefore~$\mathcal S_{\frak p}$ corresponds to a $c$-singleton.
 
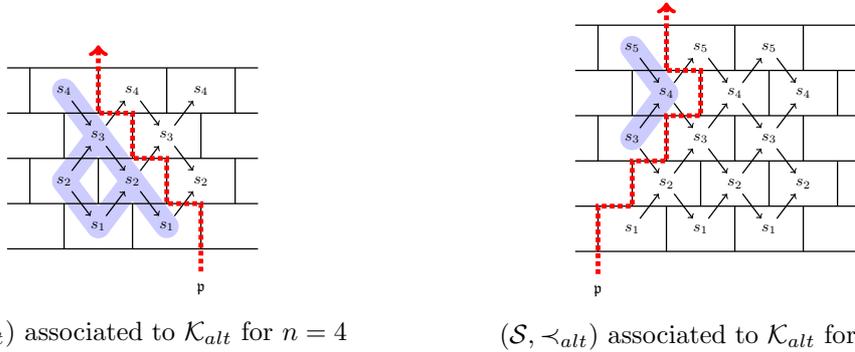
\begin{figure}[!ht]
\begin{center}
	\begin{tabular}{c@{\hspace{2cm}}c}
	\begin{minipage}{0.38\linewidth}
	\rule{0cm}{5mm}
	
	\noindent
	$ $\hspace{10mm}
	\scalebox{0.6}{
	\begin{tikzpicture}[spec/.style={dotted},
				decoration={snake,amplitude=.4mm,segment length=1mm},
				copies/.style={line width=15pt,color=blue!20,draw opacity=1,line cap=round,line join=round}]
	
	\draw[copies] (1.5,3) -- (3.75,0);
	\draw[copies] (1.5,1) -- (2.25,2);
	\draw[copies] (1.5,1) -- (2.25,0);
	\draw[copies] (2.25,0) -- (3.0,1);
		
	\draw[thick] ( 0.25,-0.5) -- (5.75,-0.5);
	\draw[thick] ( 0.25, 0.5) -- (5.75, 0.5);
	\draw[thick] ( 0.25, 1.5) -- (5.75, 1.5);
	\draw[thick] ( 0.25, 2.5) -- (5.75, 2.5);
	\draw[thick] ( 0.25, 3.5) -- (5.75, 3.5);

	\draw[thick]			 	(0.75, 0.5)	{} -- (0.75,1.5)   {};
	\draw[thick] 				(0.75, 2.5)	{} -- (0.75,3.5)   {};
	
	\draw[thick] 				(1.5,-0.5)	{} -- (1.5,0.5) 	  {};
	\draw[thick] 				(1.5, 1.5)	{} -- (1.5,2.5) 	  {};
	
	\draw[thick]	 			(2.25,0.5)	{} -- (2.25,1.5)   {};
	\draw[thick] 				(2.25,2.5) 	{} -- (2.25,3.5)   {};

	\draw[thick] 				(3,-0.5) 	{} -- (3,0.5) 	  {};
	\draw[thick] 				(3, 1.5) 	{} -- (3,2.5) 	  {};

	\draw[thick]		 		(3.75,0.5)	{} -- (3.75,1.5)   {};
	\draw[thick] 				(3.75,2.5) 	{} -- (3.75,3.5)   {};

	\draw[thick] 				(4.5,-0.5) 	{} -- (4.5,0.5) 	  {};
	\draw[thick] 				(4.5, 1.5) 	{} -- (4.5,2.5) 	  {};

	\draw[thick]				(5.25,0.5)	{} -- (5.25,1.5)   {};
	\draw[thick] 				(5.25,2.5)	{} -- (5.25,3.5)   {};

	\node (s4-1) at (1.5,3) {$s_4$};
	\node (s4-2) at (3.0,3) {$s_4$};
	\node (s4-3) at (4.5,3) {$s_4$};
	
	\node (s3-1) at (2.25,2) {$s_3$} edge[thick,<-] (s4-1) edge[thick,->] (s4-2);
	\node (s3-2) at (3.75,2) {$s_3$} edge[thick,<-] (s4-2) edge[thick,->] (s4-3);

	\node (s2-1) at (1.5,1) {$s_2$} edge[thick,->] (s3-1);
	\node (s2-2) at (3.0,1) {$s_2$} edge[thick,<-] (s3-1) edge[thick,->] (s3-2);
	\node (s2-3) at (4.5,1) {$s_2$} edge[thick,<-] (s3-2);

	\node (s1-1) at (2.25,0) {$s_1$} edge[thick,<-] (s2-1) edge[thick,->] (s2-2);
	\node (s1-2) at (3.75,0) {$s_1$} edge[thick,<-] (s2-2) edge[thick,->] (s2-3);
	
	\draw[line width=3pt, densely dashed,red,->] (4.5,-1) -- (4.5,0.5) -- (3.75,0.5) -- (3.75,1.5) -- (3.0,1.5) -- (3.0,2.5) -- (2.25,2.5) -- (2.25,4);
	
	\node[label=below:$\frak p$] at (4.5,-1) {};
	
	\end{tikzpicture}
	}\\[-3mm]
	
	$(\mathcal S,\prec_{alt})$ associated to $\mathcal K_{alt}$ for $n=4$
	
	\end{minipage}
	&
	
	\begin{minipage}{0.38\linewidth}
	\hspace{8mm}
	\scalebox{0.6}{
	\begin{tikzpicture}[spec/.style={dotted},
				decoration={snake,amplitude=.4mm,segment length=1mm},
				copies/.style={line width=15pt,color=blue!20,draw opacity=1,line cap=round,line join=round}]
	
	\draw[copies] (1.5,2) -- (2.25,3) -- (1.5,4);
	
	\draw[thick] ( 0.25,-0.5) -- (6.50,-0.5);
	\draw[thick] ( 0.25, 0.5) -- (6.50,0.5);
	\draw[thick] ( 0.25, 1.5) -- (6.50,1.5);
	\draw[thick] ( 0.25, 2.5) -- (6.50,2.5);
	\draw[thick] ( 0.25, 3.5) -- (6.50,3.5);
	\draw[thick] ( 0.25, 4.5) -- (6.50,4.5);

	\draw[thick] (0.75,-0.5)	 {} -- (0.75,0.5)   {};
	\draw[thick] (0.75, 1.5)	 {} -- (0.75,2.5)   {};
	\draw[thick] (0.75, 3.5)	 {} -- (0.75,4.5)   {};

	\draw[thick] (1.5,0.5)		{} -- (1.5,1.5)   {};
	\draw[thick] (1.5,2.5)		{} -- (1.5,3.5)   {};

	\draw[thick] (2.25,-0.5)	{} -- (2.25,0.5)  {};
	\draw[thick] (2.25, 1.5)	{} -- (2.25,2.5)  {};
	\draw[thick] (2.25, 3.5)	{} -- (2.25,4.5)  {};

	\draw[thick] (3.00,0.5)     {} -- (3.00,1.5)    {};
	\draw[thick] (3.00,2.5)     {} -- (3.00,3.5)    {};
			
	\draw[thick] (3.75,-0.5)	{} -- (3.75,0.5)  {};
	\draw[thick] (3.75, 1.5)	{} -- (3.75,2.5)  {};
	\draw[thick] (3.75, 3.5)	{} -- (3.75,4.5)  {};

	\draw[thick] (4.50,0.5)		{} -- (4.50,1.5)    {};
	\draw[thick] (4.50,2.5)		{} -- (4.50,3.5)    {};

	\draw[thick] (5.25,-0.5)	{} -- (5.25,0.5)  {};
	\draw[thick] (5.25, 1.5)	{} -- (5.25,2.5)  {};
	\draw[thick] (5.25, 3.5)	{} -- (5.25,4.5)  {};

	\draw[thick] (6.00, 0.5) 	-- (6.00,1.5);
	\draw[thick] (6.00, 2.5) 	-- (6.00,3.5);

	\node (s5-1) at (1.5,4) {$s_5$};
	\node (s5-2) at (3.0,4) {$s_5$};
	\node (s5-3) at (4.5,4) {$s_5$};
	
	\node (s4-1) at (2.25,3) {$s_4$} edge[thick,<-] (s5-1) edge[thick,->] (s5-2);
	\node (s4-2) at (3.75,3) {$s_4$} edge[thick,<-] (s5-2) edge[thick,->] (s5-3);
	\node (s4-3) at (5.25,3) {$s_4$} edge[thick,<-] (s5-3);

	\node (s3-1) at (1.5,2) {$s_3$} edge[thick,->] (s4-1);
	\node (s3-2) at (3.0,2) {$s_3$} edge[thick,<-] (s4-1) edge[thick,->] (s4-2);
	\node (s3-3) at (4.5,2) {$s_3$} edge[thick,<-] (s4-2) edge[thick,->] (s4-3);

	\node (s2-1) at (2.25,1) {$s_2$} edge[thick,<-] (s3-1) edge[thick,->] (s3-2);
	\node (s2-2) at (3.75,1) {$s_2$} edge[thick,<-] (s3-2) edge[thick,->] (s3-3);
	\node (s2-3) at (5.25,1) {$s_2$} edge[thick,<-] (s3-3);

	\node (s1-1) at (1.5,0) {$s_1$} edge[thick,->] (s2-1);
	\node (s1-2) at (3.0,0) {$s_1$} edge[thick,<-] (s2-1) edge[thick,->] (s2-2);
	\node (s1-3) at (4.5,0) {$s_1$} edge[thick,<-] (s2-2) edge[thick,->] (s2-3);;

	\draw[line width=3pt, densely dashed,red,->] (0.75,-1) -- (0.75,0.5) -- (1.5,0.5) -- (1.5,1.5) -- (2.25,1.5) -- (2.25,2.5) -- (3.0,2.5) -- (3.0,3.5) -- (2.25,3.5) -- (2.25,5);
	
	\node[label=below:$\frak p$] at (0.75,-1) {};
			
	\end{tikzpicture}
	}\\[-3mm]
	
	$(\mathcal S,\prec_{alt})$ associated to $\mathcal K_{alt}$ for $n=5$
	\end{minipage}
\end{tabular}
	\caption{\label{fig:dualpicture} The Hasse diagram of $(\mathcal S,\prec_{alt})$. An ordinate monotone path~$\frak p$ (dashed red line) determines an order ideal~$\mathcal S_{\mathfrak p}$ of~$(\mathcal S,\prec_{alt})$ (shaded region).}
\end{center}
\end{figure}

\medskip
We briefly indicate the relation to the work of Galambos and Reiner as well as Manin and Schechtmann. For a given Coxeter element~$c$, each 
$c$-singleton~$w$ determines a unique greedy ordinate monotone path~$\frak p_w$ in the $1$-kernel~$\mathcal K_c$ as well as a unique pseudoline
arrangement~$\mathcal A_w$ supported by~$\mathcal K_c$ (where contacts are the commutators traversed by $\frak p_w$). The arrangements~$\mathcal A_w$ 
differ only by their embedding in $\mathcal K_c$. They describe a unique pseudoline arrangement~$\mathcal A_c$ that depends only on~$c$. Galambos 
and Reiner consider the ``natural partial order''~$\mathcal{P_A}$ on the crossings of~$\mathcal A_c$ which coincides with~$(\mathcal S,\prec_{c})$ 
described above. Moreover, they show that the order ideals of~$\mathcal{P_A}$ encode an equivalence class of elementarily equivalent maximal chains 
in the weak order on~$\sym_{n+1}$ as defined by Manin and Schechtmann~\cite{manin_HigherBraidGroups_1989}. Hence the set of $c$-singletons for a 
given Coxeter element~$c$ corresponds to an equivalence class of elementarily equivalent admissible permutations of ${[n+1]\choose 2}$ also studied 
by Ziegler~\cite{ziegler_higherBruhat_1993}.

\subsection{Counting $c$-singletons}\label{sec:counting_in_type_a}

We first derive a general formula for the number~${\mathsf S}_c$ of $c$-singletons by enumeration of greedy ordinate monotone paths of~$\mathcal K_c$ 
with $n+1$ horizontal lines. To this respect, we define the \Dfn{trapeze network}~$\mathcal{T}_c$ associated to~$\mathcal K_c$ as the maximal sorting 
network with $n+1$ horizontal lines with the following properties:
\begin{compactenum}[i)]
	\item $\mathcal{T}_c$ contains~$\mathcal{K}_c$,
	\item $\mathcal{T}_c$ is alternating,
	\item every commutator of~$\mathcal{T}_c$ is included in a greedy ordinate monotone path in $\mathcal{T}_c$ that contains a commutator 
		of~$\mathcal{K}_c$ at level~$n$.
\end{compactenum}
A commutator in~$\mathcal{T}_c$ which is not in~$\mathcal{K}_c$ is called a \Dfn{trapeze commutator}. If $\mathcal K_c=\mathcal K_{Lod}$ then 
$\mathcal T_{Lod}=\mathcal K_{Lod}$, so $\mathcal T_{Lod}$ contains no trapeze commutators. When~$\mathcal K_c=\mathcal K_{alt}$, the trapeze 
network~$\mathcal T_{alt}$ contains~$8$ trapeze commutators for $n=4$ and contains $10$ trapeze commutators for $n=5$, see Figure~\ref{fig:trapeze_A4}.

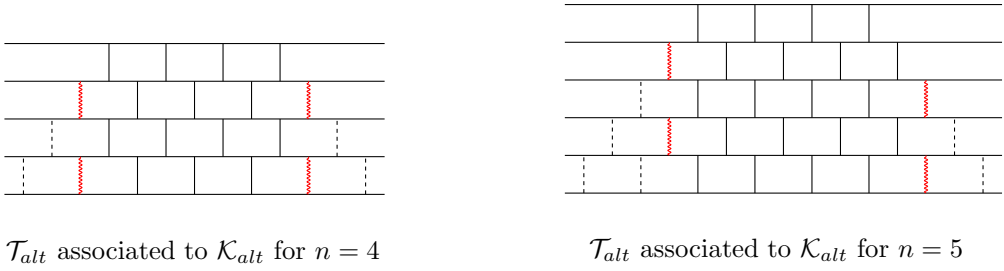
\begin{figure}[!ht]
\begin{center}
	\begin{tabular}{c@{\hspace{2cm}}c}
	\begin{minipage}{0.38\linewidth}
	\rule{0cm}{5mm}
	
	\noindent
	$ $\hspace{-2mm}
	\scalebox{0.5}{
	\begin{tikzpicture}[spec/.style={dotted},
				decoration={snake,amplitude=.4mm,segment length=1mm}]
	
	\draw[thick] (-3.0,-0.5) -- (7.0,-0.5);
	\draw[thick] (-3.0, 0.5) -- (7.0,0.5);
	\draw[thick] (-3.0, 1.5) -- (7.0,1.5);
	\draw[thick] (-3.0, 2.5) -- (7.0,2.5);
	\draw[thick] (-3.0, 3.5) -- (7.0,3.5);

	\draw[thick,dashed] 		(-2.5,-0.5) -- (-2.5,0.5);

	\draw[thick,dashed] 		(-1.75,0.5) -- (-1.75,1.5);

	\draw[thick,decorate,red]	(-1.00,-0.5) -- (-1.0,0.5);
	\draw[thick,decorate,red]	(-1.00, 1.5) -- (-1.0,2.5);

	\draw[thick]			 	(-0.25, 0.5)	{} -- (-0.25,1.5)   {};
	\draw[thick] 				(-0.25, 2.5)	{} -- (-0.25,3.5)   {};
	
	\draw[thick] 				(0.5,-0.5)	{} -- (0.5,0.5) 	  {};
	\draw[thick] 				(0.5, 1.5)	{} -- (0.5,2.5) 	  {};
	
	\draw[thick]	 			(1.25,0.5)	{} -- (1.25,1.5)   {};
	\draw[thick] 				(1.25,2.5) 	{} -- (1.25,3.5)   {};

	\draw[thick] 				(2,-0.5) 	{} -- (2,0.5) 	  {};
	\draw[thick] 				(2, 1.5) 	{} -- (2,2.5) 	  {};

	\draw[thick]		 		(2.75,0.5)	{} -- (2.75,1.5)   {};
	\draw[thick] 				(2.75,2.5) 	{} -- (2.75,3.5)   {};

	\draw[thick] 				(3.5,-0.5) 	{} -- (3.5,0.5) 	  {};
	\draw[thick] 				(3.5, 1.5) 	{} -- (3.5,2.5) 	  {};

	\draw[thick]				(4.25,0.5)	{} -- (4.25,1.5)   {};
	\draw[thick] 				(4.25,2.5)	{} -- (4.25,3.5)   {};

	\draw[thick,decorate,red]	(5.00,-0.5) -- (5.0,0.5);
	\draw[thick,decorate,red]	(5.00, 1.5) -- (5.0,2.5);

	\draw[thick,dashed] 		(5.75,0.5) -- (5.75,1.5);

	\draw[thick,dashed] 		(6.5,-0.5) -- (6.5,0.5);
	
	\end{tikzpicture}
	}\\
	
	$\mathcal T_{alt}$ associated to $\mathcal K_{alt}$ for $n=4$
	\end{minipage}
	
	&
	\begin{minipage}{0.4\textwidth}
	\hspace{-5mm}
	\scalebox{0.5}{
	\begin{tikzpicture}[spec/.style={dotted},
				decoration={snake,amplitude=.4mm,segment length=1mm}]
	
	\draw[thick] (-2.75,-0.5) -- (8.75,-0.5);
	\draw[thick] (-2.75, 0.5) -- (8.75,0.5);
	\draw[thick] (-2.75, 1.5) -- (8.75,1.5);
	\draw[thick] (-2.75, 2.5) -- (8.75,2.5);
	\draw[thick] (-2.75, 3.5) -- (8.75,3.5);
	\draw[thick] (-2.75, 4.5) -- (8.75,4.5);

	\draw[thick,dashed] 		(-2.25,-0.5) -- (-2.25,0.5);

	\draw[thick,dashed] 		(-1.5, 0.5) -- (-1.5,1.5);

	\draw[thick,dashed] 		(-0.75,-0.5) -- (-0.75,0.5);
	\draw[thick,dashed] 		(-0.75, 1.5) -- (-0.75,2.5);

	\draw[thick,decorate,red]	(0.00, 0.5) -- (0.00,1.5);
	\draw[thick,decorate,red]	(0.00, 2.5) -- (0.00,3.5);

	\draw[thick] (0.75,-0.5)	 {} -- (0.75,0.5)   {};
	\draw[thick] (0.75, 1.5)	 {} -- (0.75,2.5)   {};
	\draw[thick] (0.75, 3.5)	 {} -- (0.75,4.5)   {};

	\draw[thick] (1.5,0.5)		{} -- (1.5,1.5)   {};
	\draw[thick] (1.5,2.5)		{} -- (1.5,3.5)   {};

	\draw[thick] (2.25,-0.5)	{} -- (2.25,0.5)  {};
	\draw[thick] (2.25, 1.5)	{} -- (2.25,2.5)  {};
	\draw[thick] (2.25, 3.5)	{} -- (2.25,4.5)  {};

	\draw[thick] (3.00,0.5)     {} -- (3.00,1.5)    {};
	\draw[thick] (3.00,2.5)     {} -- (3.00,3.5)    {};
			
	\draw[thick] (3.75,-0.5)	{} -- (3.75,0.5)  {};
	\draw[thick] (3.75, 1.5)	{} -- (3.75,2.5)  {};
	\draw[thick] (3.75, 3.5)	{} -- (3.75,4.5)  {};

	\draw[thick] (4.50,0.5)		{} -- (4.50,1.5)    {};
	\draw[thick] (4.50,2.5)		{} -- (4.50,3.5)    {};

	\draw[thick] (5.25,-0.5)	{} -- (5.25,0.5)  {};
	\draw[thick] (5.25, 1.5)	{} -- (5.25,2.5)  {};
	\draw[thick] (5.25, 3.5)	{} -- (5.25,4.5)  {};

	\draw[thick] (6.00, 0.5) 	-- (6.00,1.5);
	\draw[thick] (6.00, 2.5) 	-- (6.00,3.5);

	\draw[thick,decorate,red]	(6.75,-0.5) -- (6.75,0.5);
	\draw[thick,decorate,red]	(6.75, 1.5) -- (6.75,2.5);

	\draw[thick,dashed] 		(7.5, 0.5) -- (7.5,1.5);
	
	\draw[thick,dashed] 		(8.25,-0.5) -- (8.25,0.5);
		
	\end{tikzpicture}
	}\\
	
	$\mathcal T_{alt}$ associated to $\mathcal K_{alt}$ for $n=5$
	\end{minipage}
\end{tabular}
	\caption{\label{fig:trapeze_A4} The trapeze network~$\mathcal T_{alt}$ with trapeze commutators drawn dashed or zig-zag. The trapeze commutators in $\Theta_{alt}$ are drawn zig-zag.}
\end{center}
\end{figure}

Clearly, any greedy ordinate monotone path in~$\mathcal{T}_c$ is either a greedy ordinate monotone path that traverses only commutators of~$\mathcal{K}_c$ 
or a greedy ordinate monotone path in~$\mathcal T_c$ that traverses at least one trapeze commutator. As there are~$|U_c|+2$ commutators of~$\mathcal K_c$ 
at level~$n$ and since there are~$2^{n-1}$ distinct ordinate monotone paths in~$\mathcal T_c$ that end at each commutator at level~$n$, we conclude that 
the number of greedy ordinate monotone paths in~$\mathcal{T}_c$ equals $(|U_c|+2)2^{n-1}$. It remains to count the ordinate monotone paths that traverse 
at least one trapeze commutator. Clearly, these paths are naturally partitioned by the last trapeze commutator traversed. Let~$\Theta_c$ denote the set 
of trapeze commutators that appear as last trapeze commutator of some greedy ordinate monotone path in~$\mathcal{T}_c$ and let~$\gamma_t$ denote the 
number of ordinate monotone paths in~$\mathcal{T}_c$ that stay in~$\mathcal K_c$ after traversing~$t\in \Theta_c$ at level~$\ell_t$. Then there
are~$\gamma_t2^{\ell_t-1}$ greedy ordinate monotone paths in~$\mathcal T_c$ with~$t\in\Theta_c$ as last trapeze commutator. We conclude
\[
	{\mathsf S}_c
	=
	(|U_c|+2)2^{n-1} - \sum_{t\in\Theta_c}\gamma_t2^{\ell_t-1}.
\]
A trivial consequence is ${\mathsf S}_{Lod}=2^n$ as $\Theta_{Lod}=U_{Lod}=\varnothing$ and $D_{Lod}=[n+1]$.

\medskip
\noindent
For a less trivial example, we prove ${\mathsf S}_{alt}=\mathsf{fb}(n+1)$. Assume first that~$n=2k$. Then
\[
	|U_{alt}|=\frac{n}{2} 
	\qquad\text{and}\qquad
	(|U_{alt}|+2)2^{n-1}=(n+4)2^{n-2}.
\]
For $0\leq r \leq k-1$ there are exactly two distinct commutators in~$\Theta_{alt}$ at odd level $\ell_t = 2r+1$ and no commutator at even 
level $\ell_t=2r+2$, see Figure~\ref{fig:trapeze_A4}. For $t\in \Theta_{alt}$ to the left of $\mathcal K_{alt}$ with $\ell_t = 2r+1$, any 
greedy ordinate monotone path~$\mathfrak p$ with last trapeze commutator~$t$ contains a greedy ordinate monotone path~$\tilde{\mathfrak p}$ 
from~$t$ to a commutator of $\mathcal K_{alt}$ at level~$n$ that uses only commutators in $\mathcal K_{alt}$. The path~$\tilde{\mathfrak p}$ 
traverses $n - \ell_t$ commutators, where the first one is determined since~$\tilde{\mathfrak p}$ must take an ``east'' step after~$t$. Moreover, 
at any position~$\tilde{\mathfrak p}$ must have taken strictly more ``east'' steps than ``west'' steps. The number of such paths~$\tilde{\mathfrak p}$ 
is $\binom{2(k-r-1)}{k-r-1}$, see \cite[Corollary~6]{nilsson_enumeration_2012} for details. 

A similar argument applies if $t\in \Theta_{alt}$ with $\ell_t = 2r+1$ is located the right of $\mathcal K_{alt}$, so 
$\gamma_{t} = \binom{2(k-r-1)}{k-r-1}=\binom{n-\ell_t-1}{(n-\ell_t-1)/2}$ for all $t\in \Theta_{alt}$ relates to the sequence of central binomial 
coefficients~{\tt{A000984}} of~\cite{oeis}. Setting $p:=k-r-1$ and using Gosper's algorithm to obtain a closed form for the hypergeometric sum 
\cite[Chapter~5]{petkovsek_ab_1996}, we conclude
\[
	\sum_{t\in\Theta_{alt}}
		\gamma_t2^{\ell_t-1}
	=
	2\sum_{r=0}^{k-1}
		\binom{2(k-r-1)}{k-r-1}
		2^{2r}
	=2^{n-1}
	\sum_{p=0}^{k-1}
		\binom{2p}{p}
		2^{-2p}
	=
	k\binom{2k}{k}
	=
	\frac{n}{2}\binom{n}{\frac{n}{2}}.
\]
If we now assume $n=2k-1$ then
\[
	|U_{alt}|=\frac{n-1}{2}
	\qquad\text{and}\qquad
	(|U_{alt}|+2)2^{n-1}=(n+3)2^{n-2}.
\]
For $0\leq r \leq k-2$ there is precisely one commutator in~$\Theta_{alt}$ at even level $\ell_t = 2r+2$. Since $n-\ell_t$ is odd, we get
$\gamma_t=\binom{n-\ell_t-1}{(n-\ell_t-1)/2}=\binom{2(k-r-2)}{k-r-2}$. Further there is precisely one commutator in~$\Theta_{alt}$ at odd 
level $\ell_t = 2r+1$, a similar argument yields $\gamma_t=\binom{2(k-r-2)+1}{(k-r-2)+1}$. Thus 
$\gamma_t=\binom{n-\ell_t-1}{\lfloor\frac{n-\ell_t-1}{2}\rfloor}$ relates to sequence~{\tt{A001405}} of~\cite{oeis}. Setting $p:=k-r-2$ and 
again using Gosper's algorithm we conclude 
\[
	\sum_{t\in\Theta_{alt}}
		\gamma_t2^{\ell_t-1}
	=
	2^{n-3}\sum_{p=0}^{k-2}
			\frac{(4p+3)}{p+1}\binom{2p}{p}2^{-2p}\\
	=
	-2^{n-2}+\frac{2n-1}{2}\binom{n-1}{\frac{n-1}{2}}.
\]
We now set $n=m-1$ and this proves the claim ${\mathsf S}_{alt}=\mathsf{fb}(n+1)$. This provides a new proof of Formula~\eqref{formula:fishburn}.

\section{Enumeration of $c$-singletons -- General Case}\label{sec:general}

In order to provide formulae to enumerate singletons in the general case of arbitrary finite Coxeter systems, we first generalize the poset 
$(\mathcal S, \prec_c)$ to general type, and present a planar embedding of its Hasse diagram in Section~\ref{subsec:nat_part_order}. In 
Section~\ref{ssec:equivalence_classes}, we describe two equivalence relations on words and class representatives indexed by Coxeter elements. 
In Section~\ref{subsec:2covers_cut_paths}, we present a graph called the \emph{2-cover} that we embed on a cylinder. In Section~\ref{ssec:cutpaths}, 
we count \emph{cut paths} in this embedding and provide a correspondence to Coxeter elements. In Section~\ref{ssec:crossingcutpaths}, we define 
when two cut paths are \emph{crossing}. Finally, in Section~\ref{subsec:enumerate_c_singletons}, starting with a Coxeter element $c$, we obtain 
a formula for the cardinality~$\mathsf S_c$ of a Cambrian acyclic domain by counting certain cut path that do not cross the cut path corresponding 
to~$c$.

Consider an \Dfn{irreducible finite Coxeter system}~$(W,S)$ of rank~$n$ with  generators~$s_1, \ldots , s_n$ and length function~$\ell$. A 
\Dfn{Coxeter element}~$c\in W$ is the product of~$n$ distinct generators of~$S$ in some order and $\mathsf{Cox}(W,S)$ is the set of all Coxeter 
elements of~$(W,S)$. The \Dfn{Coxeter number}~$h$ is the smallest positive integer such that $c^h$ is the identity of~$W$ and is independent of 
the choice of~$c$. As proposed by Shi~\cite{shi_enumeration_1997}, we identify Coxeter elements $c\in W$ and \emph{orientations $\Gamma_c$ of 
the Coxeter graph}~$\Gamma$ associated to~$W$: an edge $\{ s,t\}$ of~$\Gamma$ is directed from~$s$ to~$t$ if and only if~$s,t\in S$ do not commute 
and~$s$ comes before~$t$ in (any reduced expression of)~$c$. A \Dfn{word}~$\wword$ in~$S$ is a concatenation $\sigma_1\ldots\sigma_k$ for some 
nonnegative integer~$k$ and $\sigma_i\in S$, a \Dfn{subword} of $\wword=\sigma_1\ldots \sigma_k$ is a word $\sigma_{i_1}\ldots \sigma_{i_r}$ 
with $1\leq i_1 < \ldots < i_{r} \leq k$ and the \Dfn{support}~$\operatorname{supp}(\wword)$ of~$\wword$ is the set of generators that appear 
in~$\wword$. Of particular interest is the unique element~$\wo\in W$ of maximum length $\ell(\wo) = N := \tfrac{nh}{2}$ which is called 
\Dfn{longest element}. A reduced expression $\woword=\sigma_1\ldots\sigma_N$ is called \Dfn{longest word}.

\subsection{Natural partial order}\label{subsec:nat_part_order}

A \Dfn{Coxeter triple} $(W,S,\wword)$ is an irreducible finite Coxeter system~$(W,S)$ together with a word~$\wword$ in~$S$ such that 
$\operatorname{supp}(\wword)=S$. Any Coxeter triple~$(W,S,\wword)$ induces a unique reduced expression~$\c_{\wword}$ of a Coxeter 
element~$c_{\wword}$ where the elements of~$S$ appear according to their first appearance in~$\wword$. In particular, $\wword$ induces 
a canonical orientation on~$\Gamma$. We first define the natural partial order~$\prec_{\wword}$ on the disjoint union 
$\mathcal L_\wword := \{\sigma_1, \ldots, \sigma_k\}$ of letters of the word~$\wword=\sigma_1\ldots\sigma_k$. The map 
$\mathsf{g}:\mathcal L_\wword \rightarrow S$ assigns to each letter~$\sigma_i$ at position~$i$ the corresponding generator. The natural 
partial order appeared in \cite[Section~2]{viennot_1986}, \cite[Section~2.2]{stembridge_1996}, \cite[Definition~6]{galambos_acyclic_2008}.

\begin{definition}[Natural partial order]\label{def:infinite_natural_partial_order}
	\hfill\break
	The \Dfn{natural partial order}~$\prec_{\wword}$ on~$\mathcal L_\wword$ is defined for any Coxeter triple~$(W,S,\wword)$
	as follows: $\sigma_r \prec_{\wword} \sigma_s$ if and only if there is a subword $\sigma_{i_1}\ldots \sigma_{i_k}$
	of~$\wword$ such that $\sigma_{i_1}=\sigma_r$, $\sigma_{i_k}=\sigma_s$ and the elements $\mathsf{g}(\sigma_{i_j})$ 
	and $\mathsf{g}(\sigma_{i_{j+1}})$ do not commute for all $1\leq j\leq k-1$. The Hasse diagram 
	of~$(\mathcal L_\wword,\prec_\wword)$ is an oriented graph denoted by~$\mathcal G_\wword$.
\end{definition}

\begin{example}\label{expl:hasse_graph}
	Let~$(\Sigma_5,S,\wword)$ be the Coxeter triple with generators $s_i = (i\ \ i+1)$ and 
	\[
		\wword := s_3s_2s_1s_2s_3s_4s_2s_3s_2s_1s_2s_3s_4s_3s_2s_1s_3s_2s_3s_4.
	\]
	The induced reduced word for the Coxeter element~$c_\wword$ is $\c_\wword=s_3s_2s_1s_4$ and
	we have $|\mathcal L_\wword|=20$. Moreover, $(\mathcal L_\wword,\prec_\wword)$ consists of the following 
	$21$~covering relations:
	\begin{align*}
		\sigma_1 \precdot_\wword \sigma_2,		& &	\sigma_2 \precdot_\wword \sigma_3,		& &	\sigma_3 \precdot_\wword \sigma_4,		& &	\sigma_4 \precdot_\wword \sigma_5,		& &	\sigma_5    \precdot_\wword \sigma_6,	& &	\sigma_5    \precdot_\wword \sigma_7,	& &	\sigma_6 \precdot_\wword \sigma_8, \\
		\sigma_7 \precdot_\wword \sigma_8,		& &	\sigma_8 \precdot_\wword \sigma_9,		& &	\sigma_9 \precdot_\wword \sigma_{10},	& &	\sigma_{10} \precdot_\wword \sigma_{11},	& &	\sigma_{11} \precdot_\wword \sigma_{12},	& &	\sigma_{12} \precdot_\wword \sigma_{13},	& & \sigma_{13} \precdot_\wword \sigma_{14}, \\
		\sigma_{14} \precdot_\wword \sigma_{15},	& &	\sigma_{15} \precdot_\wword \sigma_{16},	& &	\sigma_{15}	\precdot_\wword \sigma_{17},	& &	\sigma_{16}	\precdot_\wword \sigma_{18},	& & \sigma_{17} \precdot_\wword \sigma_{18},	& &	\sigma_{18} \precdot_\wword \sigma_{19},	& &	\sigma_{19} \precdot_\wword \sigma_{20}.
	\end{align*}
\end{example}

\begin{example}
	Let $(W,S,\wword_1)$ and $(W,S,\wword_2)$ be Coxeter triples such that $\wword_1\neq\wword_2$, but $\wword_1$ is obtained from $\wword_2$ 
	by a sequence of braid relations of length~$2$ (and no deletions). Then $\c_{\wword_1}\neq\c_{\wword_2}$ are reduced expressions for the 
	same Coxeter element~$c\in W$ and the posets $(\mathcal L_{\wword_1},\prec_{\wword_1})$ and $(\mathcal L_{\wword_2},\prec_{\wword_2})$ are 
	isomorphic.
\end{example}

The next result gives a crossing-free straight-line planar embedding of the Hasse diagram~$\mathcal G_\wword$ for~$(W,S,\wword)$. 

\begin{proposition}\label{prop:drawing}
	Let~$k$ be a positive integer and~$(W,S,\c)$ be a Coxeter triple where~$\c$ is a reduced expression for $c\in\mathsf{Cox}(W,S)$. The graph 
	$\mathcal G_{\c^k}$ is connected, planar, and has a crossing-free straight-line planar embedding using integer vertex coordinates such that 
	the $x$-coordinate is strictly increasing in direction of every oriented edge.
\end{proposition}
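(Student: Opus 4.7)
The plan is to build the embedding explicitly, guided by a plane drawing of the Coxeter diagram~$\Gamma$. The key preliminary observation is that every covering relation $\sigma_r \precdot_{\c^k} \sigma_s$ forces $\mathsf{g}(\sigma_r)$ and $\mathsf{g}(\sigma_s)$ to be adjacent in~$\Gamma$: by Definition~\ref{def:infinite_natural_partial_order}, a witness subword of length $t\geq 3$ would contribute an intermediate letter $\sigma_{i_2}$ with $\sigma_r \prec_{\c^k} \sigma_{i_2} \prec_{\c^k} \sigma_s$, contradicting coverness. Specializing to $k=1$, this identifies the undirected graph underlying $\mathcal G_\c$ with~$\Gamma$, and since $(W,S)$ is irreducible~$\Gamma$ is connected, hence so is $\mathcal G_\c$. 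The general case follows by induction on~$k$: appending a new copy of~$\c$ adds covering edges linking the new letters to the preceding block via any $\Gamma$-adjacent pair of generators.

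For the construction, I would exploit that $\Gamma$ is a tree (true for every irreducible finite Coxeter system). Fix a plane embedding of~$\Gamma$; it prescribes a cyclic (or linear, at leaves) order of the neighbours of each vertex. Choose distinct integer $y$-coordinates $y_s$ for each generator $s\in S$ that are compatible with this embedding, in the sense that, around every vertex, the vertical order of its neighbours matches the prescribed cyclic order. Then place each letter $\sigma_j$ of $\c^k$ at $(Mj,\,y_{\mathsf{g}(\sigma_j)})$, where $M$ is a positive integer to be fixed at the end. Strict $x$-monotonicity along every oriented edge is automatic since $r<s$ whenever $\sigma_r \precdot_{\c^k} \sigma_s$.

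Crossing-freeness reduces to two checks. First, the edges between a fixed pair of $\Gamma$-adjacent horizontal levels do not cross: two interleaving coverings $\sigma_r \precdot_{\c^k} \sigma_s$ and $\sigma_{r'} \precdot_{\c^k} \sigma_{s'}$ with $r<r'<s<s'$ would place $\sigma_{r'}$ strictly between the endpoints of the first covering in the partial order, violating coverness. Second, edges incident to distinct $\Gamma$-edges cannot cross, because the chosen plane embedding of $\Gamma$ already prescribes a consistent vertical order around every vertex. Finally, taking $M$ large enough rules out the finitely many rational coincidences that could place a vertex in the interior of some edge.

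The main obstacle I anticipate is the second non-crossing check for non-path Coxeter diagrams (types $D_n$, $E_6$, $E_7$, $E_8$), where at a vertex of valence three or more the edges to different neighbours must fan out consistently in the plane. Committing from the start to a plane embedding of $\Gamma$ and matching the assignment $s\mapsto y_s$ to it turns this into bookkeeping rather than a combinatorial obstruction; the scaling factor~$M$ then absorbs any residual vertex-on-edge incidences.
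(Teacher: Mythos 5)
Your preliminary observations are fine: covering relations of $\prec_{\c^k}$ only join letters whose generators are adjacent in $\Gamma$, connectivity follows, $x$-monotonicity is automatic, and your first non-crossing check (two edges over the same pair of levels) is a correct coverness argument. The gap is the second non-crossing check, which is asserted rather than proved and is in fact false under your stated hypotheses. Concretely, take $(W,S)$ of type $D_4$ with branch vertex $s_2$ and $\c=s_2s_3s_1s_4$, and choose levels $y_{s_1}=-1$, $y_{s_2}=0$, $y_{s_3}=1$, $y_{s_4}=2$; these are distinct integers and the vertical order of the neighbours of $s_2$ matches a plane embedding of the star, so your compatibility condition is satisfied. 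The first five letters of $\c^2$ are placed at $(M,0)$, $(2M,1)$, $(3M,-1)$, $(4M,2)$, $(5M,0)$, and both $\sigma_1\precdot\sigma_4$ and $\sigma_2\precdot\sigma_5$ are covering relations (the leaves commute pairwise and no $s_2$ occurs between positions $2$ and $4$). The segments from $(M,0)$ to $(4M,2)$ and from $(2M,1)$ to $(5M,0)$ cross transversally: the first lies below the second at $x=2M$ and above it at $x=3M$, for every $M$. The same failure already occurs in type $A_3$ with $\c=s_1s_2s_3$ and levels $(y_{s_1},y_{s_2},y_{s_3})=(0,3,1)$, so ``compatibility with a plane embedding of $\Gamma$'' cannot be the right hypothesis: it only constrains the order of neighbours around each vertex, whereas whether the ``tent'' of edges over one $\Gamma$-edge pokes through the tent over another depends on arithmetic inequalities relating the level differences to the positions of the letters inside $\c$, which your conditions do not control. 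Moreover, the closing appeal to ``$M$ large enough'' is vacuous: replacing $x=j$ by $x=Mj$ is an affine change of coordinates, so it preserves every collinearity and every crossing (with $\c=s_1s_2s_3s_4$ in $D_4$ and levels $(-1,0,1,2)$ the vertex $\sigma_3$ lies on the covering edge $\sigma_2\sigma_4$ for every $M$).

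This is exactly the difficulty the paper's coordinates are built to avoid: there the height of a letter is its position along the longest path of $\Gamma$, the branch generator $s_n$ is \emph{not} given its own level but is placed at the same height as its neighbour $s_r$ with a horizontal offset of $\pm 1$, and the $x$-coordinate is a $\pm 1$ walk within each copy of $\c$, shifted by $2$ from one copy to the next; with these choices every horizontal strip between consecutive levels contains only the edges coming from a single $\Gamma$-edge, and crossing-freeness reduces to the local check you already know how to do. Your one-level-per-generator scheme with $x$ equal to the position in the word can probably be repaired, but only by imposing explicit nesting inequalities on the levels (e.g.\ placing the level of $s_n$ immediately next to that of $s_r$ and spreading the path levels by a large factor), and these inequalities depend on $\c$, not merely on a plane embedding of $\Gamma$; supplying and verifying them is precisely what is missing from your argument.
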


\begin{proof}
	If $k=1$ then $\mathcal G_{\c}$ is isomorphic (as oriented graph) to the Coxeter graph~$\Gamma$ oriented
	according to~$\c$. By the classification of finite Coxeter groups, $\Gamma$ is a tree which is connected and 
	planar. Now label the vertices of~$\Gamma$ such that $s_1,\ldots,s_p$ are successive vertices of~$\Gamma$ 
	along a path of maximum length. We have $p=n-1$ if $(W,S)$ is of type~$D_n$, $E_6$, $E_7$ or~$E_8$ and
	$p=n$ otherwise. If $p=n-1$ we label the path such that the remaining vertex $s_n$ is connected to~$s_r$ 
	where $r=n-2$ (type~$D_n$) or $r=n-3$ (otherwise). To obtain the claimed drawing, locate $s_1$ at $(0,0)$ 
	and determine coordinates~$(x_j,y_j)$ for~$s_j$ with $j\leq p$ inductively from $(x_{j-1},y_{j-1})$ 
	for~$s_{j-1}$ via $x_j:=x_{j-1}\pm 1$ and $y_j:=y_{j-1}+1$ where the sign depends on the orientation 
	of~$\{ s_{j-1},s_j\}$. If $p=n-1$ then coordinates for the remaining point~$s_n$ are $x_n:=x_r\pm 1$ 
	and $y_n:=y_r$.
	
	\medskip
	\noindent
	We now inductively construct a planar drawing of~$\mathcal G_{\c^{k+1}}$ from a drawing of~$\mathcal G_{\c^k}$. 
	Observe that
	\begin{compactenum}[i)]
		\item $\mathcal L_{\c^k}$ and $\mathcal L_{\c^{k+1}}$ differ by a copy of~$S$. Denote the vertices 
			of~$\mathcal L_{\c^{k+1}}\setminus\mathcal L_{\c^k}$ by $s_i^{k+1}$ for $1\leq i \leq n$.
		\item The new covering relations of~$(\mathcal L_{\c^{k+1}},\prec_{\c^{k+1}})$ are of two types:
			\begin{compactitem}
				\item $s_j^{k+1}\precdot_{\c^{k+1}}s_{j'}^{k+1}$ if and only if $s_{j'}\rightarrow s_j$ in~$\Gamma$;
				\item $s_j^{k+1}\precdot_{\c^{k+1}}s_{j'}^{k}$ if and only if $s_j\rightarrow s_j$ in~$\Gamma$.
			\end{compactitem}
	\end{compactenum}
	Now set $x_j^{k+1}:=x_j^k+2$ and $y_j^{k+1}=y_j^k$ to obtain valid coordinates for~$s_j^{k+1}$ and include 
	oriented edges according to the covering relation of~$\prec_{\c^{k+1}}$.
\end{proof}
  
\begin{definition}[Tiles and their boundary]
	Let $(W,S,\wword)$ be a Coxeter triple.
	An (open) \Dfn{tile} of~$\mathcal G_\wword$ is a bounded connected component of~$\R^2\setminus \mathcal G_{\wword}$. 
	The \Dfn{boundary} of the closure of~$T$ is denoted by~$\partial T$.
\end{definition}

To simplify notation, the closure of a tile~$T$ is also denoted by~$T$.

\begin{corollary}\label{cor:Gw_in_Gck}
	Let~$T$ be a tile of~$\mathcal G_{\c^k}$ for a Coxeter triple $(W,S,\c^k)$. The boundary~$\partial T$ defines an 
	induced subgraph of~$\mathcal G_{\c^k}$ with four vertices; one vertex is a~source of out-degree~$2$ and one vertex
	is a~sink of in-degree~$2$. In particular, the source and sink of this subgraph are letters of~$\c^k$ of consecutive 
	copies of~$\c$ that represent the same generator of~$S$. 
\end{corollary}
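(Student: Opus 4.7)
My plan is to leverage the planar embedding constructed in Proposition~\ref{prop:drawing} and analyse a tile $T$ through the directed structure of its boundary. Since the $x$-coordinate is strictly increasing along every oriented edge of $\mathcal G_{\c^k}$, the bounded face $T$ possesses a unique leftmost vertex $\sigma$ on its boundary (from which two boundary edges leave, so $\sigma$ is a source of out-degree $2$ in $\partial T$) and a unique rightmost vertex $\tau$ (into which two boundary edges enter, so $\tau$ is a sink of in-degree $2$ in $\partial T$). Consequently $\partial T$ decomposes into two directed paths from $\sigma$ to $\tau$. Writing $\sigma = s^l$ for some generator $s \in S$ and copy $l$, the two outgoing boundary edges at $\sigma$ point to letters of two distinct neighbors $s_1, s_2$ of $s$ in $\Gamma$; since $\Gamma$ is a tree, $s_1$ and $s_2$ are not adjacent to each other.

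Next I would use the covering relations listed in the inductive step of the proof of Proposition~\ref{prop:drawing} to exhibit an explicit length-$2$ directed path from $s^l$ to $s^{l+1}$ for each neighbor $s'$ of $s$ in $\Gamma$: namely $s^l \precdot_{\c^k} s'^l \precdot_{\c^k} s^{l+1}$ when $s \to s'$ in $\Gamma_c$, and $s^l \precdot_{\c^k} s'^{l+1} \precdot_{\c^k} s^{l+1}$ when $s' \to s$. Arranged in the cyclic order inherited from the planar embedding, the resulting $\deg_\Gamma(s)$ length-$2$ paths form a theta-like configuration from $s^l$ to $s^{l+1}$ that cuts out exactly $\deg_\Gamma(s) - 1$ quadrilateral bounded faces, each one bounded by two such paths, each having four vertices on its boundary, each with source $s^l$ and sink $s^{l+1}$, and middle vertices corresponding to two consecutive neighbors of $s$ in $\Gamma$.

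The main obstacle, I expect, is to show that every tile arises in this way, ruling out tiles with boundary paths of length $>2$ or with source and sink representing different generators. For this I would exploit the explicit integer coordinates $x_j^l = x_j + 2(l-1)$ and $y_j^l = y_j$ from the construction of the embedding: since any two generators adjacent in $\Gamma$ have $x$-coordinates differing by exactly one, a short case analysis shows that the interior of each proposed quadrilateral contains no vertex of $\mathcal G_{\c^k}$, so each such 4-cycle genuinely bounds a tile. Conversely, a tile with a longer boundary path or with source and sink from different generators would, by planarity and the above theta-configurations around each vertex $s^l$ of degree $\ge 2$ in $\Gamma$, have to contain in its interior some letter $s^l$ of $\c^k$, yielding a contradiction. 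This forces the source and sink of any tile to be letters of $\c^k$ representing the same generator $s$, in consecutive copies $l$ and $l+1$ of $\c$, as required.
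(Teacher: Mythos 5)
The paper offers no separate proof of Corollary~\ref{cor:Gw_in_Gck}: it is meant to follow from the covering relations and the explicit integer coordinates recorded in the proof of Proposition~\ref{prop:drawing}, and that is exactly the route you take, so in substance your argument is the intended one. The letters covering $s^{(l)}$ are one occurrence of each $\Gamma$-neighbour of $s$, the same letters are covered by $s^{(l+1)}$, and the resulting $\deg_\Gamma(s)$ parallel length-two paths cut out $\deg_\Gamma(s)-1$ quadrilaterals with source $s^{(l)}$ and sink $s^{(l+1)}$; your non-adjacency remark (no two neighbours of $s$ are themselves adjacent, since $\Gamma$ is a tree) is what makes the boundary an \emph{induced} four-vertex subgraph.

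Two steps deserve more care than your write-up gives them. First, in your opening paragraph the deduction ``consequently $\partial T$ decomposes into two directed paths from $\sigma$ to $\tau$'' is not a consequence of planarity and $x$-monotonicity alone: a bounded face of an $x$-monotone planar drawing can in general have several boundary sources and sinks, so uniqueness of the source and sink is precisely the content to be proved, not an ambient fact. It does follow from the programme of your last paragraph, provided you phrase the converse as a local statement rather than a containment heuristic: the two boundary edges of an arbitrary tile $T$ leaving its leftmost vertex $\sigma=s^{(l)}$ are both outgoing, hence consecutive outgoing edges in the rotation at $\sigma$; once the coordinate check shows each proposed quadrilateral has empty interior and is therefore a face, the face sitting between two consecutive outgoing edges at $\sigma$ is one of these quadrilaterals, so $T$ equals it. Second, your theta-configuration needs $s^{(l+1)}$ to exist, so the last copy of $\c$ requires a word: if $\sigma=s^{(k)}$ were the leftmost vertex of a tile, the boundary would have to join the two neighbouring branches while avoiding $\sigma$, and since $s$ separates those branches in the tree $\Gamma$, any such connection passes through an earlier occurrence $s^{(m)}$, $m<k$, contradicting leftmostness; hence no spurious tiles arise at the truncation boundary. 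With these repairs your proposal is complete and agrees with the justification the paper leaves implicit.
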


\subsection{Equivalence classes and $\c$-sorting words}\label{ssec:equivalence_classes}

As in \cite{stembridge_1996}, we now define the equivalence relations $\sameelement$ and $\equptocomm$ on words in~$S$ as well as representatives 
for the equivalence classes $[\wword]_\sameelement$ and $[\wword]_\equptocomm$ that are determined by a reduced expression~$\c$ 
for $c\in\mathsf{Cox}(W,S)$.

First, we write $\uword\sameelement\vword$ if and only if $\uword,\vword$ are reduced words that represent the same element~$w\in~W$.
The equivalence class~$[\uword]_\sameelement$ depends only on $w$, so we often write $[w]_\sameelement$ instead of $[\wword]_\sameelement$.
Following Reading~\cite{reading_clusters_2007}, we define the $\c$-sorting word~$\wc$ of~$w$ as the lexicographically first subword of the infinite word $\c^\infty=\c\c\c\ldots$ (as a 
sequence of positions) which belongs to~$[w]_\sameelement$.

Second, $\uword\equptocomm\vword$ if and only if $\uword,\vword$ are  words that \Dfn{coincide up to commutations}, 
that is, one is obtained from the other by a sequence of braid relations of length~$2$ (and no deletions).
The \Dfn{$\c$-sorting word}~$\wword^\c$ of~$\wword$ is defined
as the element of~$[\wword]_\equptocomm$ that appears first lexicographically as a subword of the infinite word $\c^\infty=\c\c\c\ldots$ 
(as a sequence of positions).

Due to the similar definition,~$\wc$ and~$\wword^\c$ are both called $\c$-sorting word. We emphasize that~$\wc$ 
represents $[w]_\sameelement$ while $\wword^\c$ represents $[\wword]_\equptocomm$ and, by definition,  
$\ell(\uword)=\ell(\vword)$ if $\uword\equptocomm\vword$ but $\uword$ and $\vword$ are not necessarily reduced. 
Although the definition of~$\wc$ depends on a reduced expression~$\c$ for~$c$, we have $\pmb{w}^{\c_1}\equptocomm \pmb{w}^{\c_2}$
if $\c_1 \equptocomm \c_2$. 

 \begin{example}
	The words $\uword=s_1s_2s_1$ and $\vword=s_2s_1s_2$ are reduced words for the longest element $w_\circ\in\Sigma_3$
	of the Coxeter system~$(\Sigma_3,S)$ with generators $s_i = (i\ \ i+1)$ for $i\in\{1,2\}$. Thus $\uword\sameelement \vword$.
	As both words do not coincide up to commutations, we have $\uword\not\equptocomm\vword$. More precisely, we have
	\[
		\bwoc=s_1s_2|s_1,\qquad
		\uword^\c=s_1s_2|s_1\qquad\text{and}\qquad
		\vword^\c=s_2|s_1s_2
	\]
	if $\c=s_1s_2$ and
	\[
		\bwoc=s_2s_1|s_2,\qquad
		\uword^\c=s_1|s_2s_1,\qquad\text{and}\qquad 
		\vword^\c=s_2s_1|s_2,
	\]
	if $\c=s_2s_1$. We write~$|$ to distinguish between copies of~$\c$ in~$\c^\infty$.
\end{example}

\begin{example}[Example~\ref{expl:hasse_graph} continued]
	\hfill\break
	The $\c_\wword$-sorting word of~$\wword$ is
	$\wword^{\c_\wword}=s_3s_2s_1|s_2|s_3s_2s_4|s_3s_2s_1|s_2|s_3s_4|s_3s_2s_1|s_3s_2|s_3s_4$.
\end{example}

\begin{lemma}\label{lem:drawing_Gw}
	Let~$(W,S,\wword)$ be a Coxeter triple. The oriented graph~$\mathcal G_\wword$ is an induced oriented subgraph 
	of~$\mathcal G_{\c_\wword^m}$ for some positive integer~$m$.
\end{lemma}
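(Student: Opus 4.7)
The plan is to reduce the statement to the case where $\wword$ is its own $\c_\wword$-sorting word, realise that word as a subword of a sufficiently high power $\c_\wword^m$, and verify that this positional inclusion exhibits $\mathcal G_\wword$ as an induced oriented subgraph of $\mathcal G_{\c_\wword^m}$.

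First I would use that the natural partial order is invariant under commutation equivalence: a length-$2$ braid relation merely swaps two adjacent commuting letters, which does not affect which consecutive pairs in subwords are non-commuting and hence preserves the chains defining $\prec$. This is the content of the example immediately following Example~\ref{expl:hasse_graph}, and it yields a label-preserving isomorphism $\mathcal G_\wword \cong \mathcal G_{\wword^{\c_\wword}}$ of oriented graphs. So I may replace $\wword$ by its $\c_\wword$-sorting form. By definition, $\wword^{\c_\wword}$ is a subword of $\c_\wword^\infty$ and hence of $\c_\wword^m = \tau_1\cdots\tau_{mn}$ for every sufficiently large~$m$; let $i_1<\cdots<i_k$ be the positions it occupies and define $\iota(\sigma_r):=\tau_{i_r}$, an injective generator-preserving map $\mathcal L_{\wword^{\c_\wword}}\hookrightarrow \mathcal L_{\c_\wword^m}$.

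Second I would check the induced-subgraph property in both directions. The easy direction is that any non-commuting chain in $\wword^{\c_\wword}$ remains a chain in $\c_\wword^m$ after applying~$\iota$, so chains lift; combined with the definition of covering, any cover $\iota(\sigma_r)\precdot_{\c_\wword^m}\iota(\sigma_s)$ between embedded letters must already be a cover of $\mathcal G_{\wword^{\c_\wword}}$, for otherwise an intermediate in $\wword^{\c_\wword}$ would lift through~$\iota$ to an intermediate in $\c_\wword^m$ contradicting the cover. The harder direction is that every cover of $\mathcal G_{\wword^{\c_\wword}}$ must itself be a cover of $\mathcal G_{\c_\wword^m}$: I must exclude intermediates $\tau_t\in\mathcal L_{\c_\wword^m}\setminus\iota(\mathcal L_{\wword^{\c_\wword}})$ sitting between two embedded covering letters.

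The main obstacle is ruling out these external intermediates, and the key tool is the lex-first property defining the $\c$-sorting word together with the planar and tile structure of $\mathcal G_{\c_\wword^k}$ from Proposition~\ref{prop:drawing} and Corollary~\ref{cor:Gw_in_Gck}. Concretely, if an external $\tau_t$ produced a chain between $\iota(\sigma_r)$ and $\iota(\sigma_s)$, one should be able to use the non-commuting moves inherited from the Coxeter graph~$\Gamma$ to exchange a letter of $\wword^{\c_\wword}$ into a strictly earlier position of $\c_\wword^\infty$, contradicting the lex-minimality of $\wword^{\c_\wword}$. Alternatively, one can appeal to the inductive construction of $\mathcal G_{\c_\wword^{k+1}}$ from $\mathcal G_{\c_\wword^k}$ given in the proof of Proposition~\ref{prop:drawing}: each newly added vertex generates only the two types of covers listed there, so, provided $m$ is chosen large enough that every covering pair of $\wword^{\c_\wword}$ is realised between two boundary vertices of a single tile in $\mathcal G_{\c_\wword^m}$, no new intermediate can be inserted between covering pairs of the image. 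Pinning down this combinatorial separation---namely bookkeeping tile-level covers and quantifying how large $m$ must be to isolate every would-be external intermediate from the covering pairs of $\wword^{\c_\wword}$---is the crux of the argument.
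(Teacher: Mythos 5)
Your construction is the same as the paper's: replace $\wword$ by the lexicographically first member of $[\wword]_\equptocomm$ realized as a subword of $\c_\wword^\infty$, take $m$ large enough that this subword lies in $\c_\wword^m$, and use commutation-invariance of the natural partial order to identify $\mathcal G_\wword$ with the graph on the embedded letters. Your ``easy direction'' is fine: a cover of $\mathcal G_{\c_\wword^m}$ between two embedded letters is a single non-commuting pair, hence the two letters are comparable already in the subword heap, and any intermediate inside $\wword^{\c_\wword}$ would lift to an intermediate in $\c_\wword^m$, contradicting the cover.

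The gap is the step you yourself defer: showing that every covering pair of the embedded word is still an edge of $\mathcal G_{\c_\wword^m}$, i.e.\ that no letter of $\c_\wword^m$ lies strictly between the two embedded letters in $\prec_{\c_\wword^m}$. Of your two proposed repairs, the second cannot work as stated: enlarging $m$ is irrelevant, since the potential intermediates sit between two fixed embedded positions and appending further copies of $\c_\wword$ does not affect them, and ``realising each covering pair inside a single tile'' is merely a restatement of what must be proved. The first suggestion (exploit lex-minimality) is the right lever, but it is only a hint, not an argument: one has to use that the lex-first choice places each letter at the earliest occurrence of its generator compatible with its predecessors in the heap, and deduce that for a covering pair $\sigma\precdot\tau$ no occurrence of $\mathsf{g}(\sigma)$ or $\mathsf{g}(\tau)$ --- and hence no intermediate at all --- separates the two embedded positions, which is exactly the description of the edges of $\mathcal G_{\c_\wword^m}$. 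To be fair, the paper's own proof is equally laconic at this point (it simply asserts that the embedded Hasse diagram is induced), so you have correctly located where the content lies; but since your proposal explicitly leaves that verification open, it is not yet a proof.
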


\begin{proof}
	Let~$\mathbf {\widetilde w}$ be the subword of $\c_\wword\c_\wword\c_\wword\ldots$ that is lexicographically first
	(as a sequence of positions) among all subword of $\c_\wword\c_\wword\c_\wword\ldots$ that coincide with~$\wword$ 
	up to commutations and let~$m$ be the minimum integer such that~$\mathbf {\widetilde w}$ is a subword 
	of~$\c_\wword^m=(\c_\wword)^m$. Then $(\mathcal L_{\mathbf {\widetilde w}},\prec_{\mathbf {\widetilde w}})$ and 
	$(\mathcal L_{\wword},\prec_{\wword})$ are isomorphic, so their Hasse diagrams coincide and $\mathcal G_\wword$ 
	is an induced subgraph of~$\mathcal G_{\c_\wword^m}$. 
\end{proof}

We often write~$\mathcal G_{\wword}$ for the graph~$\mathcal G_{\wword}$ embedded according to Lemma~\ref{lem:drawing_Gw}.

\begin{example}[Example~\ref{expl:hasse_graph} continued]
	\hfill\break
	As $\wword$ is a subword of~$\c_\wword^9$, we obtain a planar drawing of~$\mathcal G_\wword$ induced 
	from the planar drawing of~$\mathcal G_{\c_{\wword}^9}$ as shown in Figure~\ref{fig:plane_drawing}.
\end{example}

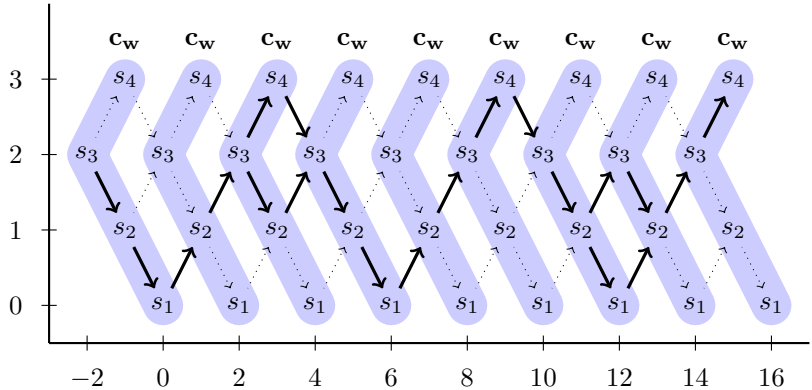
\begin{figure}[!ht]
		\begin{tikzpicture}[spec/.style={gray},
                    epais/.style={very thick,<-},
                    mince/.style={<-,dotted},
                    copies/.style={line width=15pt,color=blue,draw opacity=0.20,line cap=round,line join=round}]
	
	\draw[copies] (2,0) -- (1,2) -- (1.5,3);
	\draw[copies] (3,0) -- (2,2) -- (2.5,3);
	\draw[copies] (4,0) -- (3,2) -- (3.5,3);
	\draw[copies] (5,0) -- (4,2) -- (4.5,3);
	\draw[copies] (6,0) -- (5,2) -- (5.5,3);
	\draw[copies] (7,0) -- (6,2) -- (6.5,3);
	\draw[copies] (8,0) -- (7,2) -- (7.5,3);
	\draw[copies] (9,0) -- (8,2) -- (8.5,3);
	\draw[copies] (10,0) -- (9,2) -- (9.5,3);
	
	\node (s3-0) at (1,2) {$s_3$};
	\node (s4-0) at (1.5,3) {$s_4$} edge[mince] (s3-0);
	
	\node (s2-1) at (1.5,1) {$s_2$} edge[epais] (s3-0);
	\node (s3-1) at (2,2) {$s_3$} edge[mince] (s2-1) edge[mince] (s4-0);
	\node (s4-1) at (2.5,3) {$s_4$} edge[mince] (s3-1);
	
	\node (s1-2) at (2,0) {$s_1$} edge[epais] (s2-1);
	\node (s2-2) at (2.5,1) {$s_2$} edge[epais] (s1-2) edge[mince] (s3-1);
	\node (s3-2) at (3,2) {$s_3$} edge[epais] (s2-2) edge[mince] (s4-1);
	\node (s4-2) at (3.5,3) {$s_4$} edge[epais] (s3-2);
		
	\node (s1-3) at (3,0) {$s_1$} edge[mince] (s2-2);
	\node (s2-3) at (3.5,1) {$s_2$} edge[mince] (s1-3) edge[epais] (s3-2);
	\node (s3-3) at (4,2) {$s_3$} edge[epais] (s2-3) edge[epais] (s4-2);
	\node (s4-3) at (4.5,3) {$s_4$} edge[mince] (s3-3);

	\node (s1-4) at (4,0) {$s_1$} edge[mince] (s2-3);
	\node (s2-4) at (4.5,1) {$s_2$} edge[mince] (s1-4) edge[epais] (s3-3);
	\node (s3-4) at (5,2) {$s_3$} edge[mince] (s2-4) edge[mince] (s4-3);
	\node (s4-4) at (5.5,3) {$s_4$} edge[mince] (s3-4);

	\node (s1-5) at (5,0) {$s_1$} edge[epais] (s2-4);
	\node (s2-5) at (5.5,1) {$s_2$} edge[epais] (s1-5) edge[mince] (s3-4);
	\node (s3-5) at (6,2) {$s_3$} edge[epais] (s2-5) edge[mince] (s4-4);
	\node (s4-5) at (6.5,3) {$s_4$} edge[epais] (s3-5);

	\node (s1-6) at (6,0) {$s_1$} edge[mince] (s2-5);
	\node (s2-6) at (6.5,1) {$s_2$} edge[mince] (s1-6) edge[mince] (s3-5);
	\node (s3-6) at (7,2) {$s_3$} edge[mince] (s2-6) edge[epais] (s4-5);
	\node (s4-6) at (7.5,3) {$s_4$} edge[mince] (s3-6);

	\node (s1-7) at (7,0) {$s_1$} edge[mince] (s2-6);
	\node (s2-7) at (7.5,1) {$s_2$} edge[mince] (s1-7) edge[epais] (s3-6);
	\node (s3-7) at (8,2) {$s_3$} edge[epais] (s2-7) edge[mince] (s4-6);
	\node (s4-7) at (8.5,3) {$s_4$} edge[mince] (s3-7);

	\node (s1-8) at (8,0) {$s_1$} edge[epais] (s2-7);
	\node (s2-8) at (8.5,1) {$s_2$} edge[epais] (s1-8) edge[epais] (s3-7);
	\node (s3-8) at (9,2) {$s_3$} edge[epais] (s2-8) edge[mince] (s4-7);
	\node (s4-8) at (9.5,3) {$s_4$} edge[epais] (s3-8);

	\node (s1-9) at (9,0) {$s_1$} edge[mince] (s2-8);
	\node (s2-9) at (9.5,1) {$s_2$} edge[mince] (s1-9) edge[mince] (s3-8);

	\node (s1-10) at (10,0) {$s_1$} edge[mince] (s2-9);

	\draw (0.6,0) -- (0.4,0) node[label=left:$0$] {};
	\draw (0.6,1) -- (0.4,1) node[label=left:$1$] {};
	\draw (0.6,2) -- (0.4,2) node[label=left:$2$] {};
	\draw (0.6,3) -- (0.4,3) node[label=left:$3$] {};
	
	\draw[thick] (0.5,-0.5) -- (0.5,4);
	\draw[thick] (0.5,-0.5) -- (10.5,-0.5);
	
	\draw (1,-0.4) -- (1,-0.6) node[label=below:$-2$] {};
	\draw (2,-0.4) -- (2,-0.6) node[label=below:$0$] {};
	\draw (3,-0.4) -- (3,-0.6) node[label=below:$2$] {};
	\draw (4,-0.4) -- (4,-0.6) node[label=below:$4$] {};
	\draw (5,-0.4) -- (5,-0.6) node[label=below:$6$] {};
	\draw (6,-0.4) -- (6,-0.6) node[label=below:$8$] {};
	\draw (7,-0.4) -- (7,-0.6) node[label=below:$10$] {};
	\draw (8,-0.4) -- (8,-0.6) node[label=below:$12$] {};
	\draw (9,-0.4) -- (9,-0.6) node[label=below:$14$] {};
	\draw (10,-0.4) -- (10,-0.6) node[label=below:$16$] {};	

	\node at (1.5,3.5) {$\c_\wword$};
	\node at (2.5,3.5) {$\c_\wword$};
	\node at (3.5,3.5) {$\c_\wword$};
	\node at (4.5,3.5) {$\c_\wword$};
	\node at (5.5,3.5) {$\c_\wword$};
	\node at (6.5,3.5) {$\c_\wword$};
	\node at (7.5,3.5) {$\c_\wword$};
	\node at (8.5,3.5) {$\c_\wword$};
	\node at (9.5,3.5) {$\c_\wword$};
	
\end{tikzpicture}
		\caption{The crossing-free straight-line embedding of $\mathcal G_{\c_\wword^9}$ described in 
			Proposition~\ref{prop:drawing} together with $\mathcal G_\wword$ as subgraph 
			according to Lemma~\ref{lem:drawing_Gw}.}
		\label{fig:plane_drawing} 
\end{figure}

\begin{remark}\label{rem:remarks_on_equivalences}\hfill
	\begin{compactenum}[a)]
		\item If~$\uword\equptocomm\vword$ then $(\mathcal L_\uword,\prec_\uword)$ 
			and~$(\mathcal L_{\vword},\prec_{\vword})$ are isomorphic posets and $\mathcal G_\uword$ 
			and~$\mathcal G_{\vword}$ are isomorphic directed graphs.
		\item Let $(W,S,\c)$ be a Coxeter triple of type~$A$ and $\wword\in[\bwoc]_\equptocomm$. 
			Then~$(\mathcal L_\wword,\prec_\wword)$ is isomorphic to~$(\mathcal S,\prec_c)$ described in 
			Section~\ref{sec:counting_c_singletons_type_A}.
		\item The graph~$\mathcal G_{\bwoc}$ is isomorphic to the Auslander--Reiten quiver associated to~$c\in\mathsf{Cox}(W,S)$ 
			and~$\mathcal G_{\c^k}$ is a finite truncation of the repetition quiver described by 
			Keller~\cite[Section~2.2]{keller_cluster_2010} for all positive integers~$k$.
	\end{compactenum}
\end{remark}

A word~$\sigma_1\ldots\sigma_r$ is a \Dfn{prefix up to commutations} of a word~$\mathbf w$ if and only if there is a word~$\mathbf w'\equptocomm\mathbf w$ such that the first~$r$ letters of~$\mathbf w'$ are $\sigma_1\ldots \sigma_r$. 
The following characterization of~\Dfn{$c$-singletons} serves as definition and does not depend on~$\c$ but on $c\in\mathsf{Cox}(W,S)$.

\begin{definition}[{$\c$-singletons \cite[Theorem~2.2]{hohlweg_permutahedra_2011}}]\label{defn:c_singleton}
	\hfill\break
	Let~$(W,S,\c)$ be a Coxeter triple. An element~$w\in W$ is a $\c$-singleton if and only if some reduced expression 
	of~$w$ is a prefix of $\pmb w_{\pmb \circ}^\c$ up to commutations. The number of $\c$-singletons is denoted by~$\mathsf S_\c$.
\end{definition}

\begin{definition}[Cambrian acyclic domains]\label{defn:cambrian_acyclic_domain}
	\hfill\break
	Let~$(W,S,\c)$ be a Coxeter triple. The set $\mathsf{Acyc}_\c$ of $c$-singletons is called \Dfn{Cambrian acyclic domain}
	and its cardinality is~$\mathsf S_\c$.
\end{definition}
 
The set of $\c$-singletons, endowed with the weak order inherited from~$(W,S)$, forms a distributive lattice~$L_\c$,  
\cite[Proposition~2.5]{hohlweg_permutahedra_2011}. Any distributive lattice~$L$ is isomorphic to the lattice of order ideals of 
a poset~$(P,\leq)$ which is unique up to isomorphism, \cite[Theorem~3.4.1]{stanley_enumerative_2012}. Before we show that 
$(\mathcal L_{\pmb w_{\pmb \circ}^\c}, \prec_{\pmb w_{\pmb \circ}^\c})$ is such a poset~$(P,\leq)$ for~$L_\c$, we recall 
that an \Dfn{order ideal} (or down-set or semi-ideal) of~$(P,\leq)$ is a subset $I\subseteq P$ such that $t\in I$ 
and $s\leq t$ implies $s\in I$ and that antichains of a finite poset~$P$ are in bijection with order ideals of~$P$ \cite[Section~3.1]{stanley_enumerative_2012}. A generator $s \in S$ is called \Dfn{initial} (resp. \Dfn{final}) in~$w\in W$ if and only 
if $\ell(sw) < \ell(w)$ (resp. $\ell(ws) < \ell(w)$). 

\begin{proposition}\label{lem:ideals}
	Let $(W,S,\c)$ be Coxeter triple. The lattice of order ideals of $(\mathcal L_{\bwoc},\prec_{\bwoc})$ 
	is isomorphic to the poset of $c$-singletons ordered by the weak order.
\end{proposition}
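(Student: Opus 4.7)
The plan is to construct an explicit order isomorphism $\phi$ from the lattice of order ideals of $(\mathcal L_{\bwoc}, \prec_{\bwoc})$ to $L_\c$. Given an ideal $I$, define $\phi(I)\in W$ to be the element whose reduced expression is read off (via the position-to-generator map $\mathsf{g}$) along any linear extension of the restriction of $\prec_{\bwoc}$ to $I$. The crucial observation making this well-defined is the following: \emph{if $\sigma_r, \sigma_s \in \mathcal L_{\bwoc}$ are incomparable under $\prec_{\bwoc}$, then $\mathsf{g}(\sigma_r)$ and $\mathsf{g}(\sigma_s)$ commute in $W$}---otherwise, taking without loss of generality $r<s$, the length-two subword $\sigma_r\sigma_s$ of $\bwoc$ directly witnesses $\sigma_r\prec_{\bwoc}\sigma_s$ via Definition~\ref{def:infinite_natural_partial_order}. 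Since any two linear extensions of $I$ are related by transpositions of adjacent incomparable elements, the resulting words in $S$ coincide up to commutations and determine the same element of $W$, so $\phi(I)$ is well-defined.

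To see $\phi(I)\in L_\c$, extend a linear extension of $I$ to one of the whole poset $\mathcal L_{\bwoc}$; the associated word lies in $[\bwoc]_{\equptocomm}$, and its length-$|I|$ prefix is a prefix of $\bwoc$ up to commutations that represents $\phi(I)$. In particular this prefix is reduced (so $\ell(\phi(I))=|I|$), and $\phi(I)$ is a $c$-singleton. Order-preservation is then immediate: for $I_1\subseteq I_2$ one extends a linear extension of $I_1$ to one of $I_2$, obtaining a reduced expression for $\phi(I_1)$ that is a prefix of one for $\phi(I_2)$, hence $\phi(I_1)\leq_{\mathrm{weak}}\phi(I_2)$. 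Surjectivity is direct: a $c$-singleton $w$ has a reduced expression $\sigma_{i_1}\cdots\sigma_{i_r}$ that is a prefix of some $\mathbf w'\equptocomm\bwoc$, and $I=\{\sigma_{i_1},\ldots,\sigma_{i_r}\}\subseteq\mathcal L_{\bwoc}$ is an order ideal---if $\sigma\prec_{\bwoc}\sigma_{i_j}$ via a chain $\sigma=\tau_1\prec_{\bwoc}\cdots\prec_{\bwoc}\tau_k=\sigma_{i_j}$ of consecutive non-commuting pairs, then no sequence of commutations can reverse any of these pairs, so $\sigma$ precedes $\sigma_{i_j}$ in $\mathbf w'$ and thus lies in the prefix---and clearly $\phi(I)=w$.

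The hard part is injectivity, since a priori two distinct subsets of positions of $\bwoc$ could yield reduced expressions for the same element of $W$ (connected by braid rather than merely commutation moves). I plan to establish injectivity by building an explicit inverse $\psi:L_\c\to J(\mathcal L_{\bwoc},\prec_{\bwoc})$ that sends $w$ to the set of positions of $\bwoc$ occupied by its $\c$-sorting word $\wc$; since $\wc$ is canonically attached to $w$ (as the lexicographically first subword of $\c^{\infty}$ in $[w]_{\sameelement}$) and, for a $c$-singleton, is itself a prefix of $\bwoc$ up to commutations, the map $\psi$ is well-defined, and the same chain argument as above shows $\psi(w)$ is an order ideal. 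A routine verification then yields $\psi\circ\phi=\mathrm{id}$, which combined with surjectivity and order-preservation of $\phi$ upgrades $\phi$ to the claimed order isomorphism (order-reflection is automatic, since $\phi(I_1)\leq_{\mathrm{weak}}\phi(I_2)$ combined with $\psi\circ\phi=\mathrm{id}$ and order-preservation of $\psi$ gives $I_1\subseteq I_2$).
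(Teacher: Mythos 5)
Your overall route is genuinely different from the paper's (which leans on the distributivity of the lattice of $c$-singletons from \cite[Proposition~2.5]{hohlweg_permutahedra_2011}, Birkhoff's theorem, and a correspondence between $c$-singletons and antichains of final letters), and most of it is sound: the well-definedness of $\phi$ via the observation that incomparable letters carry commuting generators, the fact that $\phi(I)$ is a $c$-singleton, surjectivity, and order-preservation are all correct. The genuine gap is injectivity, which you yourself flag as the hard part. Your inverse $\psi$ rests on the unproved assertion that for a $c$-singleton $w$ the $\c$-sorting word $\wc$ is itself a prefix of $\bwoc$ up to commutations. Definition~\ref{defn:c_singleton} only guarantees that \emph{some} reduced expression of $w$ is such a prefix, whereas $\wc$ is chosen lexicographically among \emph{all} of $[w]_\sameelement$, which may meet several commutation classes; that $\wc$ lies in the correct class (and that its letters actually sit among the letters of $\bwoc$, so that $\psi(w)\subseteq\mathcal L_{\bwoc}$ even makes sense) is exactly the stronger form of \cite[Theorem~2.2]{hohlweg_permutahedra_2011} and requires proof or an explicit citation. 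Worse, even granting it, calling $\psi\circ\phi=\mathrm{id}$ a ``routine verification'' is circular: if two distinct ideals $I_1\neq I_2$ had $\phi(I_1)=\phi(I_2)=w$, then $\psi(w)$ is a single ideal and $\psi\circ\phi=\mathrm{id}$ would fail for at least one of them, so checking this identity is precisely the injectivity you are trying to establish. Likewise, the order-preservation of $\psi$, which your order-reflection argument invokes, is asserted but nowhere established.

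A self-contained repair inside your framework avoids $\psi$ altogether. To the letter $\sigma_i$ of $\bwoc=\sigma_1\ldots\sigma_N$ attach the reflection $t_i=\mathsf g(\sigma_1)\cdots\mathsf g(\sigma_{i-1})\,\mathsf g(\sigma_i)\,\mathsf g(\sigma_{i-1})\cdots\mathsf g(\sigma_1)$. Since $\bwoc$ is reduced, the $t_i$ are pairwise distinct, and a single transposition of adjacent commuting letters does not change the reflection attached to either letter; hence for any ideal $I$, read along any linear extension as in your construction, the inversion set of $\phi(I)$ is $\{\,t_\sigma : \sigma\in I\,\}$. Because an element of $W$ is determined by its inversion set and $\sigma\mapsto t_\sigma$ is injective on $\mathcal L_{\bwoc}$, the map $\phi$ is injective; and since the weak order is containment of inversion sets, the same computation gives order-reflection directly, completing the isomorphism without any appeal to sorting-word properties. (Alternatively, one can argue as the paper does, using the known distributivity of the lattice of $c$-singletons and the uniqueness of the poset in Birkhoff's representation.)
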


\begin{proof}
If $w\in W$ is a $\c$-singleton then~$w$ is represented by a prefix~$\sigma_{i_1}\ldots \sigma_{i_k}$ of $\bwoc=\sigma_1\ldots \sigma_N$ 
up to commutations and the set~$F_w\subseteq\{\sigma_{i_1},\ldots, \sigma_{i_k}\}$ of final letters for~$w$ is an 
antichain of~$(\mathcal L_\bwoc,\prec_\bwoc)$. Conversely, if~$\{a_{1},\ldots, a_{k}\}$ is an antichain 
of~$(\mathcal L_\bwoc,\prec_\bwoc)$ then let~$I_j$ be the order ideal of~$(\mathcal L_\bwoc,\prec_\bwoc)$ generated 
by~$a_j$ for $1\leq j\leq k$ and consider the order ideal~$I:=\bigcup_{j=1}^{k}I_j$ of~$(\mathcal L_\bwoc,\prec_\bwoc)$ 
together with some linear extension of~$\prec_\bwoc$ on~$I$. The product of elements of~$I$ with respect to this 
linear order yields a prefix up to commutations of $\bwoc = \sigma_1\ldots \sigma_N$ with final letters~$\{a_1,\ldots,a_k\}$.  
\end{proof}

\subsection{$2$-covers}\label{subsec:2covers_cut_paths}

The longest element~$\wo$ of~$(W,S)$ defines an automorphism 
$\psi:W\rightarrow W$ defined via $w\mapsto\wo^{-1}w\wo$ that preserves length and adapts to words~$\wword=\sigma_1\ldots\sigma_r$ 
via $\psi(\wword):=\psi(\sigma_1)\ldots\psi(\sigma_r)$. Let $\operatorname{rev}(\wword):=\sigma_r\ldots\sigma_1$ 
denote the \Dfn{reverse word} of~$\wword$ and let~$\bwoc\psi(\bwoc)$ denote the concatenation of~$\bwoc$ and~$\psi(\bwoc)$. 
By Remark~7.6 of \cite{ceballos_subword_2014},

\[
	\pmb{w}_{\pmb{\circ}}^{\psi(\c)}\equptocomm\operatorname{rev}\big(\pmb{w}_{\pmb{\circ}}^{\operatorname{rev}(\c)}\big)
	\qquad\text{and}\qquad
	\c^h\equptocomm\bwoc\pmb{w}_{\pmb{\circ}}^{\psi(\c)}=\bwoc\psi(\bwoc).
\]

In combination with Remark~\ref{rem:remarks_on_equivalences}, we obtain the next lemma.

\begin{lemma}\label{lem:isomorphic_hasse}
	Let~$(W,S,\c)$ be a Coxeter triple. 
	The graphs $\mathcal G_{\c^h}$ and $\mathcal G_{\bwoc\psi(\bwoc)}$ are isomorphic as oriented graphs. 
	In particular, $\mathcal G_{\bwoc\psi(\bwoc)}$ depends on~$c\in\mathsf{Cox}(W,S)$ but not on the reduced expression~$\c$.
\end{lemma}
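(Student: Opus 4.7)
The plan is to combine the two ingredients stated immediately before the lemma. The first is the commutation equivalence
\[
	\c^h \equptocomm \bwoc\psi(\bwoc),
\]
quoted from Remark~7.6 of~\cite{ceballos_subword_2014}. The second is Remark~\ref{rem:remarks_on_equivalences}(a), which asserts that if two words coincide up to commutations, their natural partial orders are isomorphic posets, hence their Hasse diagrams are isomorphic as oriented graphs. Applying the latter directly to the displayed equivalence gives the first assertion: $\mathcal G_{\c^h}$ and $\mathcal G_{\bwoc\psi(\bwoc)}$ are isomorphic oriented graphs.

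For the \emph{in particular} claim, I would argue that any two reduced expressions $\c_1, \c_2$ of a fixed Coxeter element~$c\in\mathsf{Cox}(W,S)$ satisfy $\c_1 \equptocomm \c_2$. This is standard: a Coxeter element admits a reduced expression in which each generator of~$S$ occurs exactly once, so the only braid relations available between two such expressions are length-two commutations between commuting generators. Consequently $\c_1^h \equptocomm \c_2^h$, since the commutations can be performed independently within each of the $h$ consecutive copies of $\c$. Invoking Remark~\ref{rem:remarks_on_equivalences}(a) once more yields $\mathcal G_{\c_1^h}\cong \mathcal G_{\c_2^h}$, and chaining with the first assertion of the lemma shows that $\mathcal G_{\bwoc\psi(\bwoc)}$ depends only on~$c$ and not on the chosen reduced expression.

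Since both ingredients are already available in the paper, no real obstacle arises; the lemma is a bookkeeping step packaging the two commutation facts into the form needed for the $2$-cover construction in Section~\ref{subsec:2covers_cut_paths}. The only minor point to verify in writing the proof cleanly is that Remark~\ref{rem:remarks_on_equivalences}(a) is indeed invoked for an isomorphism of \emph{oriented} graphs rather than merely unoriented ones; this is immediate from the definition of $\prec_\wword$, because commuting two adjacent letters in~$\wword$ cannot change whether two letters are joined by a subword of pairwise non-commuting consecutive entries, nor the direction of such a subword.
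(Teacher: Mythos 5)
Your proposal is correct and follows essentially the paper's own route: the lemma is stated right after the displayed equivalence $\c^h\equptocomm\bwoc\psi(\bwoc)$ with the remark that it follows ``in combination with Remark~\ref{rem:remarks_on_equivalences}'', which is exactly your combination of the two ingredients, and your handling of the \emph{in particular} clause via $\c_1\equptocomm\c_2$ for reduced expressions of the same Coxeter element matches the paper's intent. Nothing further is needed.
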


\begin{example}[Example~\ref{expl:hasse_graph} continued]
	\hfill\break
	Consider the longest word~$\widetilde{\woword} := s_3s_2s_1s_2s_3s_4s_2s_3s_2s_1$ of $(\Sigma_5,S)$. 
	Then~$\wword$ is the concatenation~$\widetilde{\woword}\psi(\widetilde{\woword})$ and a direct computation yields 
	\[
		\widetilde{\wword}_{\pmb \circ}^{\c_\wword}=s_3s_2s_1|s_2|s_3s_2s_4|s_3s_2s_1
		\qquad\text{and}\qquad 
		\psi(\widetilde{\wword}^{\c_\wword})=s_2s_3s_4|s_3|s_2s_3s_1|s_2s_3s_4.
	\] 
	Hence, $\mathcal G_{\c^h}$ and $\mathcal G_{\widetilde{\wword}_{\pmb \circ}^{\c_\wword}\psi(\widetilde{\wword}_{\pmb \circ}^{\c_\wword})}$ 
	are not isomorphic as oriented graphs.
	On the other hand,
	\[ 
		\pmb{w}_{\pmb \circ}^{\c_\wword}=s_3s_2s_1s_4|s_3s_2s_1s_4|s_3s_4
		\qquad\text{and}\qquad
		\psi(\pmb{w}_{\pmb \circ}^{\c_\wword}) = s_2s_3s_4s_1|s_2s_3s_4s_1|s_2s_1
	\]
	show that $\c^5 \equptocomm \pmb{w}_{\pmb \circ}^{\c_\wword}\psi(\pmb{w}_{\pmb \circ}^{\c_\wword})$.
	Thus $\mathcal G_{\c^5}$ and $\mathcal G_{\pmb{w}_{\pmb \circ}^{\c_\wword}\psi(\pmb{w}_{\pmb \circ}^{\c_\wword})}$ are 
	isomorphic as oriented graphs.
\end{example}

\begin{remark}\label{rem:bi-infinite_graph}\hfill
	\begin{compactenum}[a)]
		\item The constructions of $(\mathcal L_\wword,\prec_\wword)$ and of~$\mathcal G_{\c^k}$ 
			extend to the infinite word~$\c^\infty$. Thus, there is a canonical `left-most' embedding 
			of~$\mathcal G_{\pmb{w}_{\pmb \circ}^\c\psi(\pmb{w}_{\pmb \circ}^\c)}$ inside~$\mathcal G_{\c^\infty}$ 
			by Lemma~\ref{lem:isomorphic_hasse} (using~$\mathcal G_{\c^h}$).
			Moreover, there is a natural projection $pr_{\c^h}:\mathcal L_{\c^\infty}\rightarrow\mathcal L_{\c^h}$ 
			that maps a letter of copy~$\c^{hk+i}$ in~$\c^\infty$ to the corresponding letter of copy~$\c^i$ in~$\c^h$ 
			for $0\leq i\leq h-1$ and non-negative integers~$k$.
		\item There are obvious `bi-infinite' versions $\mathcal L_{\c^\Z}$ of $\mathcal L_{\c^\infty}$ and $\mathcal G_{\c^\Z}$ 
			of $\mathcal G_{\c^\infty}$ with infinitely many copies of $\c$ in the two possible 
			directions and a projection $pr_{\c^h}:\mathcal L_{\c^\Z}\rightarrow\mathcal L_{\c^h}$ that we use in  
			Section~\ref{sec:bounds}.
	\end{compactenum}
\end{remark}

\begin{definition}[2-cover]\label{dfn:2Cover_CutPaths}
	\hfill\break
	Let~$(W,S,\c)$ be a Coxeter triple and $\pmb{w}_{\pmb \circ}^\c$ the $\c$-sorting word of~$\wo$. 
	\begin{compactenum}[a)]
		\item The \Dfn{$2$-cover}~$\TwoCov$ is a graph with 
			vertices~$\mathcal L_{\pmb{w}_{\pmb \circ}^\c\psi(\pmb{w}_{\pmb \circ}^\c)}$ 
			and directed edges induced by~$pr_{\c^h}$ from $\mathcal G_{\c^\infty}$.
		\item Let~$V_{\pmb{w}_{\pmb \circ}^\c}$ be the vertices of~$\TwoCov$ that correspond 
			to the first~$\frac{nh}{2}$ letters of~${\pmb w}_{\pmb \circ}^\c\psi(\pmb{w}_{\pmb \circ}^\c)$ 
			and~$V_{\psi(\pmb{w}_{\pmb \circ}^\c)}$ be the vertices of~$\TwoCov$ that correspond 
			to the last~$\frac{nh}{2}$ letters of~$\pmb{w}_{\pmb \circ}^\c\psi(\pmb{w}_{\pmb \circ}^\c)$. 
			Then~$\mathcal C_{\pmb{w}_{\pmb \circ}^\c}$ is the subgraph	of~$\TwoCov$ induced 
			by~$V_{\pmb{w}_{\pmb \circ}^\c}$ and~$\mathcal C_{\psi(\pmb{w}_{\pmb \circ}^\c)}$ 
			is the subgraph of~$\TwoCov$ induced by~$V_{\psi(\pmb{w}_{\pmb \circ}^\c)}$.
	\end{compactenum}
\end{definition}

The planar drawing for $\mathcal G_{\c^h}$ described in Proposition~\ref{prop:drawing} and the simple 
observation that $\mathcal G_{\c^h}$ is isomorphic to some induced subgraph of $\mathcal G_{\tc^\infty}$ 
for all $\c, \tc \in  \mathsf{Cox}(W,S)$ imply the following lemma.

\begin{lemma}\label{lem:drawing_twocov_and_iso}
	Let~$(W,S,\c)$ and~$(W,S,\tc)$ be Coxeter triples. 
	\begin{compactenum}[i)]
		\item The $2$-cover~$\TwoCov$ has a crossing-free drawing on the open cylinder~$S^1\times \R$. 
		\item The $2$-covers~$\TwoCov$ and~$\tTwoCov$ are isomorphic as directed graphs.
	\end{compactenum}
\end{lemma}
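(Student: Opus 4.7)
The plan is to prove the two parts separately, using the planar embedding of Proposition~\ref{prop:drawing} for part~(i) and the induced-subgraph observation highlighted in the paper's hint for part~(ii).

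For part~(i), I would extend the crossing-free straight-line planar embedding of $\mathcal G_{\c^k}$ from Proposition~\ref{prop:drawing} to the bi-infinite graph $\mathcal G_{\c^{\Z}}$ of Remark~\ref{rem:bi-infinite_graph}: inductively the $(k{+}1)$-st copy of $\c$ is a horizontal $(+2,0)$-translate of the $k$-th copy, so the union is a crossing-free planar drawing of $\mathcal G_{\c^{\Z}}$ invariant under the translation $T\colon (x,y)\mapsto (x+2,y)$, and a fortiori under $T^h$ of amplitude $2h$. Since the projection $pr_{\c^h}\colon\mathcal L_{\c^{\Z}}\to\mathcal L_{\c^h}$ identifies exactly the vertices of $\mathcal L_{\c^{\Z}}$ lying in the same $T^h$-orbit, quotienting the drawing by $T^h$ yields a drawing of $\TwoCov$ on the cylinder $(\R/2h\Z)\times\R\cong S^1\times\R$. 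Crossing-freeness is preserved because distinct edges of $\mathcal G_{\c^{\Z}}$ do not cross in $\R^2$, so their $T^h$-orbit images do not cross on the cylinder either.

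For part~(ii), I would invoke Lemma~\ref{lem:drawing_Gw}: for any reduced expression $\tc$ of another Coxeter element, some word $\widetilde{\mathbf{w}}\equptocomm\c^h$ appears as a subword of a sufficiently high power $\tc^M$, and hence $\mathcal G_{\c^h}\cong\mathcal G_{\widetilde{\mathbf{w}}}$ is an induced oriented subgraph of $\mathcal G_{\tc^M}\subseteq\mathcal G_{\tc^{\Z}}$. Composing this embedding $\iota$ with the projection $pr_{\tc^h}$ produces a directed graph morphism $\Phi\colon \mathcal G_{\c^h}\to\tTwoCov$. Since $\TwoCov$ has vertex set $\mathcal L_{\bwoc\psi(\bwoc)}\cong\mathcal L_{\c^h}$ (Lemma~\ref{lem:isomorphic_hasse}) enriched by wrap-around edges induced by $pr_{\c^h}$ from $\mathcal G_{\c^{\Z}}$, I would argue that $\Phi$ extends to a bijective directed graph morphism $\TwoCov\to\tTwoCov$. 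Both sides carry $nh$ vertices, so bijectivity reduces to either injectivity or surjectivity, and swapping the roles of $\c$ and $\tc$ yields an inverse morphism, hence the claimed isomorphism of directed graphs.

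The main obstacle is verifying that $\Phi$ respects the wrap-around edges of the two 2-covers. When $\iota(\mathcal L_{\c^h})$ spans strictly more than $h$ consecutive copies of $\tc$ inside $\mathcal G_{\tc^{\Z}}$ (the generic case, since in general $\c^h\not\equptocomm\tc^h$), the projection $pr_{\tc^h}$ identifies certain vertex pairs in the image; these identifications must coincide with the wrap-around identifications of $\TwoCov$ coming from $pr_{\c^h}$. Establishing this compatibility amounts precisely to the intrinsic nature of the cylindrical drawing constructed in part~(i): every vertical cut of that cylinder produces a strip realization of the form $\mathcal G_{\c'^{h}}$ for some Coxeter element $c'$, while the cylinder itself is independent of~$c'$. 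This is why the lemma follows ``simply'' from the planar embedding of Proposition~\ref{prop:drawing} together with the subword observation, rather than requiring structural input from Auslander--Reiten theory.
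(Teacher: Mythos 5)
Your part~(i) is correct and is exactly the paper's intended argument: the straight-line drawing of Proposition~\ref{prop:drawing} is invariant under the translation $(x,y)\mapsto(x+2,y)$, so quotienting the drawing of $\mathcal G_{\c^{\Z}}$ by the $h$-th power of this translation realizes $\TwoCov$ on $S^1\times\R$ without crossings; this is precisely the ``wrapping around a cylinder'' the paper describes.

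Part~(ii) has a genuine gap, and it sits exactly where you locate your ``main obstacle''. Embedding $\mathcal G_{\c^h}\cong\mathcal G_{\bwoc\psi(\bwoc)}$ into $\mathcal G_{\tc^{M}}$ and composing with $pr_{\tc^h}$ only defines a homomorphism on the subgraph $\mathcal G_{\bwoc\psi(\bwoc)}$ of $\TwoCov$; generically the image occupies more than $h$ consecutive copies of $\tc$ (already in type $A_2$ with $\c=s_1s_2$, $\tc=s_2s_1$ the lexicographically first copy of $\c^h$ in $\tc^\infty$ spans $h+1$ copies), so neither injectivity of the vertex map nor the fact that the seam edges of $\TwoCov$ (the edges of $\TwoCov$ that are not edges of $\mathcal G_{\bwoc\psi(\bwoc)}$, i.e.\ the primary cut path) are sent to edges of $\tTwoCov$ is automatic: both amount to showing that the identifications made by $pr_{\tc^h}$ on the image agree with those made by $pr_{\c^h}$, i.e.\ that the embedding of the (bi-)infinite graphs is equivariant for the two shift actions. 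Your justification of this compatibility is circular: ``the cylinder itself is independent of $c'$'' is precisely statement~(ii), and the auxiliary claim that every vertical cut of the cylinder yields a strip of the form $\mathcal G_{\c'^{h}}$ is itself nontrivial --- it is essentially the content of Corollaries~\ref{cor:tiles_for_cut_path}--\ref{cor:c_and kappa_c} together with $\c'^h\equptocomm\pmb{w}_{\pmb\circ}^{\c'}\psi(\pmb{w}_{\pmb\circ}^{\c'})$, results that come after this lemma and are not available here. One honest way to close the gap without this circularity: any two Coxeter elements of a tree-type Coxeter graph are connected by commutations and by rotations $s\c'\mapsto\c' s$ with $s$ initial; commutations do not change the heap (Remark~\ref{rem:remarks_on_equivalences}), while a rotation leaves the bi-infinite periodic word $\c^{\Z}$ unchanged except for relocating the origin, hence leaves the quotient of $\mathcal G_{\c^{\Z}}$ by the shift through $nh$ letters --- which is $\TwoCov$ --- unchanged. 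This gives $\TwoCov\cong\tTwoCov$ directly, and in particular supplies the shift-equivariance your map $\Phi$ would need.
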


We refer to a particular embedding of $\TwoCov\subset S^1\times\R\subset \R^2\times\R$ induced by~$pr_{\c^h}$ which can be  
visualized as `wrapping the plane drawing of~$\mathcal G_{\c^k}$ around a cylinder'. Without loss of generality, we assume 
that the $y$-direction for the plane drawing of~$\mathcal G_{\c^k}$ is parallel to the $z$-axis of $\R^3=\R^2\times\R$ which
we assume to coincide with the axis of the cylinder $S^1\times\R\subset \R^3$. For this reason, we identify $y$-coordinates
in the plane drawing of~$\mathcal G_{\c^k}$ with third coordinates in $\R^3$. This embedding $\TwoCov\subset \R^3$ 
has the following obvious properties:
\begin{compactenum}[i)]
	\item copies of~$s_i$ have strictly smaller third coordinate than copies of~$s_j$ for $1\leq i < j \leq p$ (where~$p$ is defined in the proof of Proposition~\ref{prop:drawing});
	\item for fixed $i \in[n]$, the third coordinate of all copies of~$s_i$ coincide;
	\item if $p<n$ then the third coordinate of~$s_r$ and~$s_n$ coincide. 
\end{compactenum}
We say that copies of the generator~$s_1$ are located at the bottom of~$\TwoCov$ while the copies of the generator~$s_p$ 
are located at the top of~$\TwoCov$. The definition of a tile as well as the proof of Corollary~\ref{cor:Gw_in_Gck} extend 
to this embedding of~$\TwoCov$. 
  
\subsection{Cut paths}\label{ssec:cutpaths}

\begin{definition}[Cut paths, primary and secondary cut paths]\label{def:cut_paths}\hfill\break
	Let~$(W,S,\c)$ be a Coxeter triple and~$\TwoCov$ the associated $2$-cover.
  	\begin{compactenum}[i)]
  		\item A \Dfn{cut path}~$\kappa$ of~$\TwoCov$ is a set of edges of~$\TwoCov$ such that every directed 
			cycle of~$\TwoCov$ contains precisely one edge of~$\kappa$. The set of all cut paths of $\TwoCov$
			is denoted by~$\mathsf{CP}(\TwoCov)$.
		\item The \Dfn{primary cut path}~$\kappa_\c$ of~$\TwoCov$ is the cut path that consists of all edges 
			of~$\TwoCov$ that are not (projections of) edges of~$\mathcal G_{\bwoc\psi(\bwoc)}$. The 
			\Dfn{secondary cut path}~$\kappa_\c^*$ is the cut path that consists of all edges 
			of~$\TwoCov\setminus\kappa_c$ that are neither edges of~$\mathcal C_{\bwoc}$ 
			nor edges of~$\mathcal C_{\psi(\bwoc)}$.
  	\end{compactenum}
\end{definition}

\noindent
The primary and secondary cut paths are disjoint because $\operatorname{supp}(\woword^\c)=\operatorname{supp}(\psi(\woword^\c))=S$.  
Primary and secondary cut paths relate to cuts of a graph as $\kappa_\c\cup\kappa_\c^*$ partitions 
the $2$-cover~$\TwoCov$ into two connected components. Every cut path~$\kappa$ induces a sequence 
(`path') of tiles that cuts the $2$-cover~$\TwoCov$ from bottom to top: consider the set of tiles 
such that consecutive tiles have at least one common edge in~$\kappa$. 

\begin{example}\label{expl:cut_paths}
	The notions of Definition~\ref{def:cut_paths} are illustrated in Figure~\ref{fig:cut_paths} for $\c_{Lod}$ 
	and $\c_{alt}$ in $(\Sigma_5,S)$. The $2$-covers~$\TwoCov$ coincide in both situations and are shown in a 
	planar drawing where vertices~$\sigma\in\mathcal L_{\pmb{w}_{\pmb \circ}^\c\psi(\pmb{w}_{\pmb \circ}^\c)}$ 
	of~$\TwoCov$ are labeled by corresponding generators $\mathsf{g}(\sigma)\in S$ and oriented edges of~$\TwoCov$ 
	contained in a cut path are indicated by~$\rightsquigarrow$. The primary and secondary cut paths~$\kappa_i$ 
	and $\kappa_i^*$ for $i\in\{1,2\}$ are edges~$\rightsquigarrow$ intersected by a dashed red line.
\end{example}

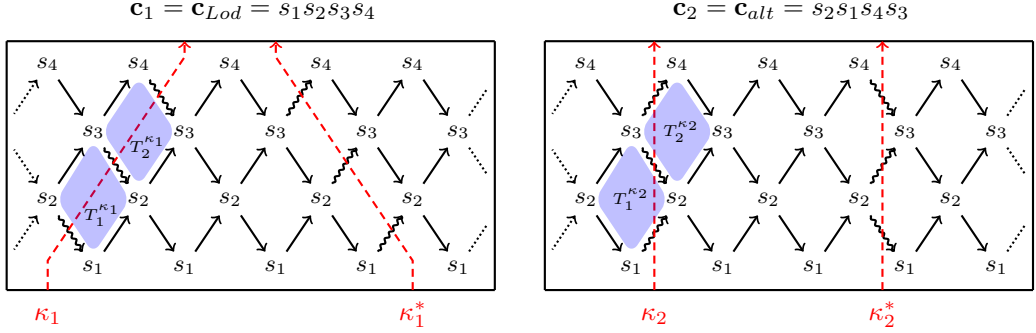
\begin{figure}[!htbp]
\begin{center}
	\begin{tabular}{cc}
	\begin{tikzpicture}[scale=0.6,decoration={snake,amplitude=.2mm,segment length=1mm},]

  \node (left1) at (4.125,0.1875) {};
  \node (left2a) at (4.125,3.1875) {};
  \node (left2b) at (4.125,2.8125) {};
  
  \draw[thick] (4.1,-0.5) -- (4.1,5);
  \draw[thick] (4.1,-0.5) -- (14.8,-0.5);
  \draw[thick] (4.1,5) -- (14.8,5);
  \draw[thick] (14.8,-0.5) -- (14.8,5);
  

  \node (s2-1) at (5,1.5) {\small$s_2$} edge[<-,thick,densely dotted] (left1) edge[<-,thick,densely dotted] (left2b);
  \node (s4-1) at (5,4.5) {\small$s_4$} edge[<-,thick,densely dotted] (left2a);
  \node (s1-1) at (6,0) {\small$s_1$} edge[<-,thick,decorate] (s2-1);
  \node (s3-1) at (6,3) {\small$s_3$} edge[<-,thick] (s2-1)  edge[<-,thick] (s4-1);
    
  \node (s2-2) at (7,1.5) {\small$s_2$} edge[<-,thick,decorate] (s3-1)  edge[<-,thick] (s1-1);
  \node (s4-2) at (7,4.5) {\small$s_4$} edge[<-,thick] (s3-1);
  \node (s1-2) at (8,0) {\small$s_1$} edge[<-,thick] (s2-2);
  \node (s3-2) at (8,3) {\small$s_3$} edge[<-,thick] (s2-2)  edge[<-,thick,decorate] (s4-2);
  
  \node (s2-3) at (9,1.5) {\small$s_2$} edge[<-,thick] (s3-2)  edge[<-,thick] (s1-2);
  \node (s4-3) at (9,4.5) {\small$s_4$} edge[<-,thick] (s3-2);


  \node (s1-3) at (10,0) {\small$s_1$} edge[<-,thick] (s2-3);
  \node (s3-3) at (10,3) {\small$s_3$} edge[<-,thick] (s4-3)  edge[<-,thick] (s2-3);
  \node (s2-4) at (11,1.5) {\small$s_2$} edge[<-,thick] (s1-3)  edge[<-,thick] (s3-3);
  \node (s4-4) at (11,4.5) {\small$s_4$} edge[<-,thick,decorate] (s3-3);

  \node (s1-4) at (12,0) {\small$s_1$} edge[<-,thick] (s2-4);
  \node (s3-4) at (12,3) {\small$s_3$} edge[<-,thick,decorate] (s2-4)  edge[<-,thick] (s4-4);
  \node (s2-5) at (13,1.5) {\small$s_2$} edge[<-,thick,decorate] (s1-4)  edge[<-,thick] (s3-4);
  \node (s4-5) at (13,4.5) {\small$s_4$} edge[<-,thick] (s3-4);
  
  \node (s1-5) at (14,0) {\small$s_1$} edge[<-,thick] (s2-5);
  \node (s3-5) at (14,3) {\small$s_3$} edge[<-,thick] (s2-5)  edge[<-,thick] (s4-5);

  \node (s2-6b) at (14.75,1.125) {} edge[thick,densely dotted] (s1-5);
  \node (s2-6a) at (14.75,1.875) {} edge[thick,densely dotted] (s3-5);
  \node (s4-6) at (14.75,4.125) {} edge[thick,densely dotted] (s3-5);

  \node[red,label=south:$\color{red}{\kappa_1}$] at (5,-0.5) {};
  \draw[red,thick,->,densely dashed] (5,-0.5) -- (5,0.15) -- (8,4.65) -> (8,5);
  \fill[blue,fill opacity=0.25,rounded corners] (5.2,1.5) -- (6,0.3) -- (6.8,1.5) -- (6,2.8) -- cycle;
  \fill[blue,fill opacity=0.25,rounded corners] (6.2,3) -- (7,1.8) -- (7.8,3) -- (7,4.2) -- cycle;
  \node (tile1) at (6.2,1.15) {\tiny{$T^{\kappa_1}_1$}};
  \node (tile1) at (7.2,2.65) {\tiny{$T^{\kappa_1}_2$}};

  \node[red,label=south:$\color{red}{\kappa^*_1}$] at (13,-0.35) {};
  \draw[red,thick,->,densely dashed] (13,-0.5) -- (13,0.15) -- (10,4.65) -> (10,5);
  \node[black,label=south:{$\c_1=\c_{Lod}=s_1s_2s_3s_4$}] at (9.5,6.25) {}; 
\end{tikzpicture} & \begin{tikzpicture}[scale=0.6,decoration={snake,amplitude=.2mm,segment length=1mm},]

  \node (left1) at (4.125,0.1875) {};
  \node (left2a) at (4.125,3.1875) {};
  \node (left2b) at (4.125,2.8125) {};
  
  \draw[thick] (4.1,-0.5) -- (4.1,5);
  \draw[thick] (4.1,-0.5) -- (14.8,-0.5);
  \draw[thick] (4.1,5) -- (14.8,5);
  \draw[thick] (14.8,-0.5) -- (14.8,5);
  

  \node (s2-1) at (5,1.5) {\small$s_2$} edge[<-,thick,densely dotted] (left1) edge[<-,thick,densely dotted] (left2b);
  \node (s4-1) at (5,4.5) {\small$s_4$} edge[<-,thick,densely dotted] (left2a);
  \node (s1-1) at (6,0) {\small$s_1$} edge[<-,thick] (s2-1);
  \node (s3-1) at (6,3) {\small$s_3$} edge[<-,thick] (s2-1)  edge[<-,thick] (s4-1);
  
  \node (s2-2) at (7,1.5) {\small$s_2$} edge[<-,thick,decorate] (s3-1)  edge[<-,thick,decorate] (s1-1);
  \node (s4-2) at (7,4.5) {\small$s_4$} edge[<-,thick,decorate] (s3-1);
  \node (s1-2) at (8,0) {\small$s_1$} edge[<-,thick] (s2-2);
  \node (s3-2) at (8,3) {\small$s_3$} edge[<-,thick] (s2-2)  edge[<-,thick] (s4-2);
  
  \node (s2-3) at (9,1.5) {\small$s_2$} edge[<-,thick] (s3-2)  edge[<-,thick] (s1-2);
  \node (s4-3) at (9,4.5) {\small$s_4$} edge[<-,thick] (s3-2);


  \node (s1-3) at (10,0) {\small$s_1$} edge[<-,thick] (s2-3);
  \node (s3-3) at (10,3) {\small$s_3$} edge[<-,thick] (s4-3)  edge[<-,thick] (s2-3);
  \node (s2-4) at (11,1.5) {\small$s_2$} edge[<-,thick] (s1-3)  edge[<-,thick] (s3-3);
  \node (s4-4) at (11,4.5) {\small$s_4$} edge[<-,thick] (s3-3);

  \node (s1-4) at (12,0) {\small$s_1$} edge[<-,thick,decorate] (s2-4);
  \node (s3-4) at (12,3) {\small$s_3$} edge[<-,thick,decorate] (s2-4)  edge[<-,thick,decorate] (s4-4);
  \node (s2-5) at (13,1.5) {\small$s_2$} edge[<-,thick] (s1-4)  edge[<-,thick] (s3-4);
  \node (s4-5) at (13,4.5) {\small$s_4$} edge[<-,thick] (s3-4);
  
  \node (s1-5) at (14,0) {\small$s_1$} edge[<-,thick] (s2-5);
  \node (s3-5) at (14,3) {\small$s_3$} edge[<-,thick] (s2-5)  edge[<-,thick] (s4-5);

  \node (s2-6b) at (14.75,1.125) {} edge[thick,densely dotted] (s1-5);
  \node (s2-6a) at (14.75,1.875) {} edge[thick,densely dotted] (s3-5);
  \node (s4-6) at (14.75,4.125) {} edge[thick,densely dotted] (s3-5);

  \node[red,label=south:$\color{red}{\kappa_2}$] at (6.5,-0.5) {};
  \draw[red,thick,->,densely dashed] (6.5,-0.5) -> (6.5,5);
  \fill[blue,fill opacity=0.25,rounded corners] (5.2,1.5) -- (6,0.3) -- (6.8,1.5) -- (6,2.8) -- cycle;
  \fill[blue,fill opacity=0.25,rounded corners] (6.2,3) -- (7,1.8) -- (7.8,3) -- (7,4.2) -- cycle;
  \node (tile1) at (6.0,1.5) {\tiny{$T^{\kappa_2}_1$}};
  \node (tile1) at (7.1,3.0) {\tiny{$T^{\kappa_2}_2$}};

  \node[red,label=south:$\color{red}{\kappa^*_2}$] at (11.5,-0.35) {};
  \draw[red,thick,->,densely dashed] (11.5,-0.5) -> (11.5,5);
  \node[black,label=south:{$\c_2=\c_{alt}=s_2s_1s_4s_3$}] at (9.5,6.25) {};
\end{tikzpicture}
	\end{tabular}
	\caption{\label{fig:cut_paths} The primary and secondary cut paths~$\kappa_i,\kappa_i^*$ depicted in a planar 
		drawing of the 2-cover~$\TwoCov$ for $\c_1=s_1s_2s_3s_4$ and $\c_2=s_2s_4s_1s_3$.}
\end{center}
\end{figure}

Since every cut path of~$\TwoCov$ that avoids edges of~$\mathcal C_{\psi(\bwoc)}$ (or, equivalently, every cut 
path of~$\TwoCov$ that avoids edges of the Hasse diagram of~$(\mathcal L_{\psi(\bwoc)},\prec_{\psi(\bwoc)})$) 
defines an antichain of~$(\mathcal L_\bwoc,\prec_\bwoc)$, we obtain the following characterization of order 
ideals of~$(\mathcal L_\bwoc,\prec_\bwoc)$.

\begin{lemma}\label{lem:c-singletons_as_cut_paths}
	Let $(W,S,\c)$ be a Coxeter triple. The set of $c$-singletons is in bijection with the set of cut paths 
	of~$\TwoCov$ that avoid edges of~$\mathcal C_{\psi({\pmb w_{\pmb \circ}^\c})}$.
\end{lemma}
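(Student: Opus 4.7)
The plan is to combine Proposition~\ref{lem:ideals}, which identifies $c$-singletons with order ideals of $(\mathcal L_{\bwoc},\prec_{\bwoc})$, with a direct bijection between these order ideals and cut paths of $\TwoCov$ that avoid edges of $\mathcal C_{\psi(\bwoc)}$. Once the bijection is established, the lemma follows by composing with this identification.

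For an order ideal $I \subseteq \mathcal L_{\bwoc}$, I would define $\kappa_I$ to be the set of edges of $\TwoCov$ of one of the following three forms: (a) edges of $\mathcal C_{\bwoc}$ with tail in $I$ and head in $\mathcal L_{\bwoc}\setminus I$, (b) edges in $\kappa_\c^*$ with tail in $I$, and (c) edges in $\kappa_\c$ with head in $\mathcal L_{\bwoc}\setminus I$. By construction, every edge of $\kappa_I$ has at least one endpoint in $V_{\bwoc}$, so $\kappa_I$ contains no edge of $\mathcal C_{\psi(\bwoc)}$. To check the cut path axiom I would take an arbitrary directed cycle $\gamma$ of $\TwoCov$: by the cylindrical embedding of Lemma~\ref{lem:drawing_twocov_and_iso}, $\gamma$ wraps exactly once around the cylinder, so it crosses the $V_{\bwoc}$-region along one directed sub-arc whose consecutive vertices are related by $\prec_{\bwoc}$; the order-ideal property then ensures that this sub-arc transitions from $I$ to $\mathcal L_{\bwoc}\setminus I$ exactly once, and accounting for the entries/exits of $V_{\psi(\bwoc)}$ via $\kappa_\c$ and $\kappa_\c^*$ gives exactly one edge of $\gamma$ in $\kappa_I$.

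Conversely, given a cut path $\kappa$ of $\TwoCov$ avoiding $\mathcal C_{\psi(\bwoc)}$, I would let $I_\kappa \subseteq V_{\bwoc}$ consist of the vertices lying on the $\kappa_\c$-side of $\kappa$ in the cylindrical embedding (the two sides being well-defined because $\kappa$ together with any meridional curve through $V_{\psi(\bwoc)}$ separates the cylinder). To verify that $I_\kappa$ is an order ideal, I would argue by contradiction: a covering relation $u \precdot_{\bwoc} v$ with $v \in I_\kappa$ and $u \notin I_\kappa$ corresponds to a directed edge $u \to v$ of $\mathcal C_{\bwoc}$ crossing $\kappa$ in the wrong direction; any directed cycle of $\TwoCov$ through this edge (obtained from the projection $pr_{\c^h}$ of a suitable monotone path in $\mathcal G_{\c^\infty}$) would then meet $\kappa$ at least twice, contradicting the cut path axiom. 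The two constructions $I \mapsto \kappa_I$ and $\kappa \mapsto I_\kappa$ are inverse to each other, as both are determined by the same side-structure of the cylindrical drawing.

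The main obstacle is formalizing the ``sides'' of $\kappa$ in the cylindrical embedding and tying the global cut path axiom to the local down-closure property along each covering relation. This is handled by using $\kappa_\c$ and $\kappa_\c^*$ as reference meridians that bound the $\mathcal C_{\bwoc}$-region of the cylinder, and by invoking Corollary~\ref{cor:Gw_in_Gck} together with Proposition~\ref{prop:drawing} to control how directed cycles of $\TwoCov$ interact with the tiling of the cylindrical embedding. With these ingredients, down-closedness in $(\mathcal L_{\bwoc},\prec_{\bwoc})$ becomes equivalent to the ``exactly one crossing per directed cycle'' condition.
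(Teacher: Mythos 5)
Your overall route coincides with the paper's: both reduce to Proposition~\ref{lem:ideals} and identify cut paths of $\TwoCov$ avoiding $\mathcal C_{\psi(\bwoc)}$ with the ideal/antichain data of $(\mathcal L_{\bwoc},\prec_{\bwoc})$ (the paper phrases this via antichains, which is an immaterial repackaging). Your forward map $I\mapsto\kappa_I$ is correct, and its verification is completable: since $\kappa_\c$ and $\kappa_\c^*$ are themselves cut paths, every directed cycle of $\TwoCov$ consists of exactly one edge of $\kappa_\c$, one edge of $\kappa_\c^*$, and two monotone arcs, one in $\mathcal C_{\bwoc}$ and one in $\mathcal C_{\psi(\bwoc)}$; down-closedness of $I$ then yields exactly one edge of $\kappa_I$ on the cycle in each of the three possible configurations of the endpoints of the $\mathcal C_{\bwoc}$-arc (your ``transitions from $I$ to $\mathcal L_{\bwoc}\setminus I$ exactly once'' should read ``at most once'', the edges of types (b) and (c) supplying the unique crossing in the remaining cases).

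The genuine gap is in the converse. A single bottom-to-top cut curve does not separate the cylinder, and your repair --- adding a second bottom-to-top curve through the $\mathcal C_{\psi(\bwoc)}$ region (note it cannot be a meridian circle) --- does not deliver the contradiction you invoke. If $(u,v)$ is a covering edge of $\mathcal C_{\bwoc}$ lying in $\kappa$ with $u\notin I_\kappa$ and $v\in I_\kappa$, a directed cycle through $(u,v)$ can perfectly well return from $v$ to $u$ by leaving through a $\kappa_\c^*$-edge not in $\kappa$, traversing $\mathcal C_{\psi(\bwoc)}$, and re-entering through a $\kappa_\c$-edge not in $\kappa$: its crossings of the auxiliary curve are edges of $\mathcal C_{\psi(\bwoc)}$, which $\kappa$ avoids, so they contribute no $\kappa$-edges, and the parity of crossings of the two-curve complex is satisfied with $(u,v)$ as the only $\kappa$-edge. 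Hence ``any directed cycle through this edge meets $\kappa$ at least twice'' does not follow from the separation argument, and this is exactly the statement that needs proof (the paper, too, only asserts the corresponding antichain property). To close it, use the same cycle decomposition as above: if some $\kappa$-edge has head $y\in V_{\bwoc}$ and another $\kappa$-edge has tail $u$ with $y\preceq_{\bwoc}u$, concatenate the first edge, a saturated chain from $y$ to $u$ in $\mathcal C_{\bwoc}$, and the second edge, and close this directed path into a simple directed cycle by routing through $\mathcal C_{\psi(\bwoc)}$ (possible, but it requires an argument, e.g.\ strong connectivity of $\TwoCov$ together with the monotone drawing); such a cycle meets $\kappa$ twice, contradicting the cut-path axiom, and down-closure of $I_\kappa$ and the identity $\kappa=\kappa_{I_\kappa}$ follow. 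As written, your proposal leaves this step unproved.
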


In particular, if the number of all cut paths and the number of cut paths that contain edges 
of~$\mathcal C_{\psi({\pmb w_{\pmb \circ}^\c})}$ are known, Lemma~\ref{lem:c-singletons_as_cut_paths}
implies a formula for the cardinality~$\mathsf S_c$. A formula for $|\mathsf {CP}(\TwoCov)|$ is obtained from
the next theorem and a formula for~$\mathsf S_c$ will be derived in Section~\ref{ssec:crossingcutpaths}.

\begin{theorem}\label{thm:cut_path_and_tiles}
	Let~$(W,S,\c)$ be a Coxeter triple, with Coxeter graph $\Gamma$, and~$\kappa \in \mathsf{CP}(\TwoCov)$.
	\begin{compactenum}[i)]
		\item For each edge~$e=\{s_i,s_j\}$ of~$\Gamma$, there exists a unique directed
			edge~$e_\kappa=(\sigma,\tau)\in \kappa$ with $\{\mathsf g(\sigma), \mathsf g(\tau)\}=\{s_i,s_j\}$.
		\item Let~$s_j$ be a vertex of degree~$2$ of~$\Gamma$ with incident edges $e=\{s_i,s_j\}$, 
			$\widetilde e=\{s_j, s_k\}$. There is a unique tile~$T$ that contains the corresponding directed edges~$e_\kappa=(\sigma,\tau)$, 
			and $\widetilde e_\kappa=(\widetilde\sigma,\widetilde\tau)$ from~i).
		\item 
			Let~$s_{r}$ be a vertex of degree~$3$ of~$\Gamma$ with incident edges\\[1mm]
			\centerline{$
				e=\{s_{r-1},s_{r}\},\qquad 
				\widetilde e=\{s_{r},s_{r+1}\}\text{ and}\qquad 
				\overline e=\{s_{r},s_n\}
			$}\\[1mm]
			and corresponding directed edges of~$\kappa$ from~i)\\[1mm]
			\centerline{$
				e_\kappa=(\sigma,\tau),\qquad
				\widetilde e_\kappa=(\widetilde \sigma,\widetilde\tau)\text{ and}\qquad 
				\overline e_\kappa=(\overline\sigma,\overline\tau).
			$}\\[1mm] 
			There are two unique tiles~$T_1,T_2$ such that $e_\kappa,\overline e_\kappa\in \partial T_1$,
			$\widetilde e_\kappa,\overline e_\kappa\in \partial T_2$ and~$\partial T_1\cap\partial T_2$
			consists of two edges of~$\TwoCov$.
	\end{compactenum}
\end{theorem}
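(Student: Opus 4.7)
My plan is to identify each edge of~$\Gamma$ with a canonical directed cycle of~$\TwoCov$ and then exploit the cylindrical geometry to locate the unique cut edge of each cycle inside a tile. For~(i), given an edge $e=\{s_i,s_j\}$ of~$\Gamma$, I would construct the directed cycle~$C_e$ supported on the $h$ copies of~$s_i$ and $h$ copies of~$s_j$ in~$\TwoCov$. Since $\Gamma$ is a tree (so no third generator is simultaneously non-commuting with both~$s_i$ and~$s_j$), Definition~\ref{def:infinite_natural_partial_order} provides a direct covering relation between consecutive copies of~$s_i$ and~$s_j$ in~$\c^\infty$; projecting by~$pr_{\c^h}$ then wraps this alternating chain into a directed cycle of length~$2h$ on the cylinder. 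Definition~\ref{def:cut_paths} forces $|\kappa\cap C_e|=1$, producing the unique edge~$e_\kappa$.

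For~(ii), I would combine~(i) with the local tile geometry around the degree-$2$ vertex~$s_j$. By Corollary~\ref{cor:Gw_in_Gck}, for each index~$a$ there is a unique tile $T_j^{(a)}$ with source $s_j^{(a)}$ and sink~$s_j^{(a+1)}$, whose four boundary edges consist of two consecutive edges of $C_{\{s_i,s_j\}}$ (from source through an intermediate copy of~$s_i$ back to sink) together with two consecutive edges of $C_{\{s_j,s_k\}}$. The $h$ tiles $\{T_j^{(a)}\}$ then tile an annular region of the cylinder whose lower and upper boundaries are~$C_{\{s_i,s_j\}}$ and~$C_{\{s_j,s_k\}}$. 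Viewing~$\kappa$ as a simple curve cutting the cylinder from bottom to top, part~(i) tells us that $\kappa$ crosses each of these boundary cycles exactly once, at~$e_\kappa$ and~$\widetilde e_\kappa$; a topological argument then confines the passage of~$\kappa$ through this annular layer to a single tile, placing $e_\kappa,\widetilde e_\kappa\in\partial T_j^{(a)}$ for a common index~$a$.

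For~(iii), the new ingredient is the planar embedding from Proposition~\ref{prop:drawing}, which places~$s_n$ at the same $y$-coordinate as~$s_r$. The length-$2$ path from $s_r^{(a)}$ through a copy of~$s_n$ to $s_r^{(a+1)}$ is therefore horizontal, and it splits the region between consecutive copies of~$s_r$ into a lower tile $T_1^{(a)}$ (bounded additionally by the length-$2$ path through a copy of~$s_{r-1}$) and an upper tile $T_2^{(a)}$ (bounded additionally by the length-$2$ path through a copy of~$s_{r+1}$), the two tiles sharing the two edges of the horizontal path. The unique cut edge~$\overline e_\kappa$ lies on one such horizontal path, say at index~$a_0$, hence on both $\partial T_1^{(a_0)}$ and $\partial T_2^{(a_0)}$, whose intersection then contains these two edges. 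Applying the annular argument of~(ii) to the sub-region between~$C_e$ and~$C_{\overline e}$ (tiled by the $T_1^{(a)}$) will yield $e_\kappa\in\partial T_1^{(a_0)}$, and symmetrically to the sub-region between~$C_{\overline e}$ and~$C_{\widetilde e}$ (tiled by the $T_2^{(a)}$) will yield $\widetilde e_\kappa\in\partial T_2^{(a_0)}$.

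The main obstacle will be making the simple-curve argument of~(ii) combinatorially rigorous: as an edge set,~$\kappa$ must meet each annular layer of same-source/sink tiles on the boundary of exactly one tile. I plan to handle this by interpreting Definition~\ref{def:cut_paths} as a cocycle condition---so that $\kappa\cup\kappa_\c^*$ separates~$\TwoCov$ into two connected components---combined with the explicit two-from-each-cycle structure of tile boundaries established above. Once this annular-passage lemma is in place, both~(ii) and~(iii) follow, with the planar embedding of Proposition~\ref{prop:drawing} supplying the local geometry for the degree-$3$ case.
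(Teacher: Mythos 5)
Your part~i) and your description of the local tile geometry (one tile per pair of consecutive copies of a degree-$2$ generator; two tiles sharing the two $\{s_r,s_n\}$-edges at the branch vertex) agree with the paper. The genuine gap is the step you yourself flag and then defer: the claim that the unique edges $e_\kappa$ and $\widetilde e_\kappa$ from~i) lie on the boundary of one \emph{common} tile. By Definition~\ref{def:cut_paths} a cut path is merely a set of edges meeting every directed cycle exactly once; inside the layer of tiles around the copies of~$s_j$ it consists of just the two isolated edges $e_\kappa$ and $\widetilde e_\kappa$, so there is no ``passage of $\kappa$ through this annular layer'' to confine until one already knows these edges share a tile. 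Treating $\kappa$ as a simple bottom-to-top curve presupposes exactly the tile-path structure that Theorem~\ref{thm:cut_path_and_tiles} and Corollary~\ref{cor:tiles_for_cut_path} are meant to establish, so that reading is circular. The separation statement you propose to use is also misstated: $\kappa_\c^*$ is defined relative to the primary cut path $\kappa_\c$, and the remark that $\kappa_\c\cup\kappa_\c^*$ disconnects $\TwoCov$ says nothing about an arbitrary $\kappa\in\mathsf{CP}(\TwoCov)$; you would first need to construct and justify a companion cut path for $\kappa$, and even granted a separation, the single-tile confinement still does not follow without a further argument.

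The paper closes this gap with a short combinatorial contradiction that your outline is missing. Suppose no tile contains both $e_\kappa$ and $\widetilde e_\kappa$, and let $T$ be the tile containing $e_\kappa$ whose source and sink are copies of $s_j$, so that by Corollary~\ref{cor:Gw_in_Gck} its other two boundary edges are labelled $\{s_j,s_k\}$. Modify the cycle through the copies of $s_i$ and $s_j$ by replacing its two edges on $\partial T$ with the other two edges of $\partial T$. The result is again a directed cycle of $\TwoCov$: it avoids $e_\kappa$, its remaining $\{s_i,s_j\}$-labelled edges are not in $\kappa$ by~i), and its two new $\{s_j,s_k\}$-labelled edges are not in $\kappa$ because the only such edge of $\kappa$ is $\widetilde e_\kappa\notin\partial T$. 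Hence this cycle contains no edge of $\kappa$, contradicting Definition~\ref{def:cut_paths}. The same detour argument applied at the branch vertex to the pairs $(e,\overline e)$ and $(\widetilde e,\overline e)$ produces the two tiles of~iii), whose boundaries then share $\overline e_\kappa$ and one further $\{s_r,s_n\}$-edge, exactly as you describe. If you replace your topological ``annular-passage'' step by this detour-cycle contradiction (or prove an equivalent combinatorial lemma), the remainder of your outline goes through.
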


\begin{proof}
	\hfill
	\begin{compactenum}[i)]
		\item \label{first_case}
			For every edge $\{s_i,s_j\}$ of the Coxeter graph~$\Gamma$ there exists a directed 
			cycle in~$\TwoCov$ that visits only vertices corresponding to~$s_i$ and~$s_j$. This implies for 
			every edge $\{s_i,s_j\}$ of~$\Gamma$ that a cut path~$\kappa$ must contain precisely one oriented 
			edge~$(\sigma,\tau)$ of~$\TwoCov$ such that $\{\mathsf{g}(\sigma),\mathsf{g}(\tau)\}=\{s_i,s_j\}$.
		\item \label{second_case} 
			By \ref{first_case}), there are unique oriented edges~$e_\kappa=(\sigma,\tau)$,
			$\widetilde e_\kappa=(\widetilde \sigma,\widetilde\tau) \in \kappa$ 
			with $\{\mathsf g(\sigma),\mathsf g(\tau)\}=\{s_i,s_j\}$ 
			and $\{\mathsf g(\widetilde \sigma),\mathsf g(\widetilde \tau)\}=\{s_j,s_k\}$.
			Suppose there is no tile~$T$ with~$e_\kappa,\widetilde e_\kappa \in \partial T$. Without loss 
			of generality, let~$T$ be the unique tile such that~$e_\kappa\in \partial T$ and the two other
			vertices of~$T$ correspond to the generators~$s_j$ and~$s_k$. Then consider the directed cycle 
			that only uses edges~$(\sigma,\tau)$ of~$\TwoCov$ with
			$\{\mathsf g(\sigma),\mathsf g(\tau)\}=\{s_i,s_j\}$ where the two edges of~$\partial T$ are 
			replaced by the other two edges of~$\partial T$. Clearly, no edge of this cycle is an edge 
			of~$\kappa$. This contradicts the assumption that~$\kappa$ is a cut path.
		\item The argument to prove \ref{second_case}) can be used to show that there are unique tiles~$T_1$ 
			and~$T_2$ such that $e_\kappa,\overline e_\kappa\in \partial T_1$ and 
			$\widetilde e_\kappa,\overline e_\kappa\in \partial T_2$. But $\partial T_1$ and $\partial T_2$ 
			clearly share a directed edge $(\alpha, \beta)$ of~$\TwoCov$ with 
			$\{\mathsf g(\alpha),\mathsf g(\beta)\}=\{s_{r},s_n\}$ that is distinct from $\overline e_\kappa$.
		\qedhere
	\end{compactenum}
\end{proof}

\begin{corollary}\label{cor:tiles_for_cut_path}
	Each cut path~$\kappa\in \mathsf{CP}(\TwoCov)$ determines a unique set of~$n-2$ tiles:\\[2mm]
	\centerline{$
		\mathsf{tile}(\kappa)=
		 \set{T_1,\ldots,T_k}
		 	 {\begin{smallmatrix}
				 \partial T_i\text{ contains}\\[0.7mm]
				 \text{$2$ edges of $\kappa$}
			 \end{smallmatrix}}.$}
\end{corollary}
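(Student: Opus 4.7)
The plan is to establish a map from $\mathsf{tile}(\kappa)$ to vertices of the Coxeter graph $\Gamma$ and count preimages via Theorem~\ref{thm:cut_path_and_tiles}, finishing with the handshake lemma on the tree $\Gamma$.

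First, I would use Corollary~\ref{cor:Gw_in_Gck} to describe the boundary of any tile $T$ of $\TwoCov$: the four vertices of $\partial T$ consist of a source and a sink labelled by a common generator $s_j\in S$ (which I will call the \emph{axis} of $T$), together with two middle vertices labelled by generators adjacent to $s_j$ in $\Gamma$. Consequently, all four edges of $\partial T$ have types of the form $\{s_j,s_i\}$ or $\{s_j,s_k\}$, with two edges of each type. Combining this with Theorem~\ref{thm:cut_path_and_tiles}(i), which asserts that $\kappa$ contains exactly one edge of each type for every edge of $\Gamma$, I conclude that $|\partial T\cap\kappa|\le 2$, and that equality forces both $\{s_i,s_j\}$ and $\{s_j,s_k\}$ to be edges of $\Gamma$. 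Hence the axis of every $T\in\mathsf{tile}(\kappa)$ must have degree at least $2$ in $\Gamma$, and the pair of $\kappa$-edges on $\partial T$ is determined by that pair of incident edges at $s_j$.

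Next, I would count preimages of the axis map $T\mapsto s_j$. By the classification of irreducible finite Coxeter groups, $\Gamma$ is a tree whose internal vertices have degree $2$ or $3$, with at most one vertex of degree $3$. Theorem~\ref{thm:cut_path_and_tiles}(ii) produces exactly one tile in $\mathsf{tile}(\kappa)$ for each vertex of degree $2$, and Theorem~\ref{thm:cut_path_and_tiles}(iii) produces exactly two tiles for the degree-$3$ vertex (when present); the third possible pair of incident edges at the branch point does not yield a tile, since in the embedding of Proposition~\ref{prop:drawing} the generator $s_n$ shares the third coordinate of $s_r$, so no diamond-shaped tile has axis $s_r$ with both middle vertices on the spine. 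This establishes
\[
	|\mathsf{tile}(\kappa)| = d_2 + 2 d_3,
\]
where $d_i$ denotes the number of degree-$i$ vertices of~$\Gamma$.

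Finally, applying the handshake lemma to the tree $\Gamma$, which has $n$ vertices and $n-1$ edges, yields
\[
	d_1 + 2d_2 + 3d_3 = 2(n-1)
	\qquad\text{and}\qquad
	d_1 + d_2 + d_3 = n.
\]
Subtracting gives $d_2 + 2d_3 = n-2$, so $|\mathsf{tile}(\kappa)| = n-2$, which is the claimed cardinality. The main obstacle in this plan is the verification that the axis map is exactly $2$-to-$1$ over the branch vertex in types $D_n$ and $E_n$, since one must confirm that the ``excluded'' pair of incident edges in Theorem~\ref{thm:cut_path_and_tiles}(iii) never gives rise to a tile in the embedding; this is essentially geometric and hinges on the convention for placing $s_n$ in Proposition~\ref{prop:drawing}.
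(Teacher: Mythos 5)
Your argument is correct and follows the route the paper intends: the corollary is exactly the per-vertex count extracted from Theorem~\ref{thm:cut_path_and_tiles} (one tile for each degree-$2$ vertex of $\Gamma$, two for the branch vertex, none for leaves), and your handling of the excluded third pair at the branch point via the embedding of Proposition~\ref{prop:drawing} is the right justification for the completeness of that count. The handshake-lemma packaging of $d_2+2d_3=n-2$ is just a uniform way of stating what the paper reads off directly from the classification ($n-2$ interior vertices in the path-like case, $n-3$ degree-$2$ vertices plus a degree-$3$ vertex otherwise), so no substantive difference.
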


We tacitly order the tiles $T_i\in\mathsf{tile}(\kappa)$ from bottom to top in 
$\TwoCov\subset S^1\times \R$: if~$z_i$ denotes the smallest third coordinate of all points 
in~$T_i$ then $1\leq i < j \leq n-2$ implies $z_i \leq z_j$. 

Lemma~\ref{lem:drawing_twocov_and_iso} states that~$\TwoCov$ is isomorphic to~$\tTwoCov$ for all $\c,\tc$, so 
it is impossible to recover $c\in\mathsf{Cox}(W,S)$ from~$\TwoCov$. As any cut path~$\kappa$ provides one 
oriented edge for every edge of~$\Gamma$, we have an induced Coxeter element $c_\kappa$, its reduced expressions
are uniquely determined up to commutations.

\begin{corollary}\label{cor:c_and kappa_c}
	Let $(W,S,\c)$ be a Coxeter triple, $\kappa_c$ be the associated primary cut path 
	and~$c_{\kappa_\c}$ be the Coxeter element obtained from $\kappa_\c$.
	For any reduced expression $\wword$ of $c_\kappa$, we have $\wword  \equptocomm \operatorname{rev}(\c)$.
\end{corollary}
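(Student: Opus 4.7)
The plan is to verify the assertion edge by edge on the Coxeter graph $\Gamma$. Since any two reduced expressions of a Coxeter element use each generator exactly once, they differ only by commutations (no braid moves of length $\geq 3$ can be applied). Hence it suffices to show that for every edge $\{s_i,s_j\}$ of $\Gamma$ the orientation of $\{s_i,s_j\}$ in $\Gamma_{c_{\kappa_\c}}$ is the reverse of the orientation in $\Gamma_c$; one may then conclude $c_{\kappa_\c} = \operatorname{rev}(c)$, so any reduced expression $\wword$ of $c_{\kappa_\c}$ satisfies $\wword \equptocomm \operatorname{rev}(\c)$.

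Fix an edge $\{s_i,s_j\}$ of $\Gamma$ and, without loss of generality, assume $s_i$ precedes $s_j$ in $\c$. By Theorem~\ref{thm:cut_path_and_tiles}(i), $\kappa_\c$ contains a unique edge $e_{\kappa_\c}=(\sigma,\tau)$ with $\{\mathsf{g}(\sigma),\mathsf{g}(\tau)\}=\{s_i,s_j\}$, so the task is to show $\mathsf{g}(\sigma)=s_j$ and $\mathsf{g}(\tau)=s_i$. Using Lemma~\ref{lem:isomorphic_hasse}, I would identify $\mathcal{G}_{\bwoc\psi(\bwoc)}$ with $\mathcal{G}_{\c^h}$; the subword of $\c^h$ obtained by restricting to $\{s_i,s_j\}$-letters is then the alternating word of length $2h$ that starts with $s_i$ and ends with $s_j$.

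Next I would work with the directed cycle $C_{ij}$ in $\TwoCov$ visiting precisely the $\{s_i,s_j\}$-vertices, whose existence is established in the proof of Theorem~\ref{thm:cut_path_and_tiles}(i). In the planar drawing of $\mathcal{G}_{\c^h}$ underlying the cylinder, every edge of $\mathcal{G}_{\c^h}$ points in the direction of strictly increasing $x$-coordinate by Proposition~\ref{prop:drawing}. No directed cycle can therefore live entirely inside this planar drawing, so $C_{ij}$ must contain at least one edge that wraps around the seam of the cylinder, i.e., an edge of $\TwoCov$ that is not an edge of $\mathcal{G}_{\bwoc\psi(\bwoc)}$. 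By the definition of the primary cut path, these wrap-around edges of $C_{ij}$ are precisely the edges of $C_{ij}$ contained in $\kappa_\c$, and Theorem~\ref{thm:cut_path_and_tiles}(i) forces there to be exactly one. That unique wrap-around edge goes from the rightmost $\{s_i,s_j\}$-vertex of the planar drawing back to the leftmost, and under the alternating pattern above, the rightmost $\{s_i,s_j\}$-letter of $\c^h$ is labeled $s_j$ while the leftmost is labeled $s_i$. Therefore $e_{\kappa_\c}$ is oriented from an $s_j$-vertex to an $s_i$-vertex, as required.

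The step I expect to be the main obstacle is the rigorous verification that the cycle $C_{ij}$ is unique and winds around the cylinder exactly once; concretely, this reduces to showing that in $\mathcal{G}_{\c^\infty}$ the $\{s_i,s_j\}$-labeled vertices form a chain in the Hasse diagram, with each covering relation between neighbouring $\{s_i,s_j\}$-letters in the restricted subword. Once this local chain statement is in hand, the remainder of the argument is bookkeeping via Theorem~\ref{thm:cut_path_and_tiles}(i), Lemma~\ref{lem:isomorphic_hasse}, and the standard fact that two reduced expressions of a Coxeter element differ only by commutations.
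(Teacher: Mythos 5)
Your proposal is correct and is essentially the paper's argument: the paper's proof consists precisely of the observation that the edges of $\TwoCov$ not coming from $\mathcal G_{\bwoc\psi(\bwoc)}$ --- i.e.\ the seam (wrap-around) edges, which by definition are exactly $\kappa_\c$ --- carry the reversed orientation of $\Gamma_c$, and your edge-by-edge analysis (together with the remark that reduced words of a Coxeter element differ only by commutations) just makes this explicit. The step you flag does close: because $\Gamma$ is a tree, any chain in the heap joining two consecutive $\{s_i,s_j\}$-letters would have to traverse the edge $\{s_i,s_j\}$ of $\Gamma$ and hence pass through a further $\{s_i,s_j\}$-letter, so consecutive such letters are indeed in covering relation; alternatively the cycle analysis can be bypassed altogether, since every edge of $\kappa_\c$ is a seam edge by definition and the inter-copy covering relations described in the construction of Proposition~\ref{prop:drawing} already orient each such edge from the $s_j$-letter to the $s_i$-letter when $s_i$ precedes $s_j$ in $\c$.
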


\begin{proof}
	The edges of~$\TwoCov$ that are not projections of edges of $\mathcal G_{\bwoc\psi(\bwoc)}$ define Coxeter 
	element that correspond to the equivalence class~$[\operatorname{rev}(\c)]_\equptocomm$.
\end{proof}

\begin{example}[Example~\ref{expl:cut_paths} continued]
	\hfill\break
	Figure~\ref{fig:cut_paths} also illustrates Corollaries~\ref{cor:tiles_for_cut_path} and~\ref{cor:c_and kappa_c}.
	First, the set of tiles~$\mathsf{tile}(\kappa_1)$ and~$\mathsf{tile}(\kappa_2)$ associated to~$\kappa_1$ 
	and~$\kappa_2$ according to Corollary~\ref{cor:tiles_for_cut_path} are illustrated. 
	This example shows that the set of tiles can coincide even if $\kappa_1\neq\kappa_2$. 
	Moreover, the Coxeter element~$c_{\kappa_1}$ is represented by~$s_4s_3s_2s_1$ and $s_4s_3s_2s_1 \in [\operatorname{rev}(\c_{Lod})]_\equptocomm$.
	Similarly, $c_{\kappa_2}$ is represented by~$s_1s_3s_2s_4$ and $s_1s_3s_2s_4 \in [\operatorname{rev}(\c_{alt})]_\equptocomm$.
\end{example}

\begin{corollary}\label{cor:number_of_cut_paths}
	The map $\Phi:\mathsf{CP}(\TwoCov)\rightarrow\mathsf{Cox}(W,S)$ sending a cut path $\kappa$ to its 
	corresponding Coxeter element $c_\kappa$ is surjective and satisfies $|\Phi^{-1}(c)|=h$ for each $c\in\mathsf{Cox}(W,S)$. 
	In particular, $|\mathsf {CP}(\TwoCov)|=2^{n-1}h$.
\end{corollary}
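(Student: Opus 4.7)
The plan has four stages. First, observe that by the classification of finite irreducible Coxeter systems, the Coxeter graph $\Gamma$ is a tree on $n$ vertices with $n-1$ edges. Every orientation of a tree is automatically acyclic, so Coxeter elements---which are in bijection with orientations of $\Gamma$ by Shi---number $|\mathsf{Cox}(W,S)| = 2^{n-1}$. Hence the identity $|\mathsf{CP}(\TwoCov)| = 2^{n-1}h$ will follow from $\Phi$ being surjective with each fiber of size exactly $h$.

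For surjectivity, given $c' \in \mathsf{Cox}(W,S)$, pick a reduced expression $\tc$ for $(c')^{-1}$, so $\operatorname{rev}(\tc)$ represents $c'$. By Corollary~\ref{cor:c_and kappa_c}, the primary cut path $\kappa_{\tc} \in \mathsf{CP}(\tTwoCov)$ satisfies $\Phi(\kappa_{\tc}) = c'$, and transporting $\kappa_{\tc}$ through the directed graph isomorphism $\tTwoCov \cong \TwoCov$ of Lemma~\ref{lem:drawing_twocov_and_iso}(ii) produces a preimage of $c'$ in $\TwoCov$.

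For the lower bound $|\Phi^{-1}(c')| \geq h$, exploit the natural $\Z/h\Z$-action on $\TwoCov$ by cyclic rotations of the $S^1$ factor, equivalently by shifting one copy of $\c$ in the planar lift $\mathcal G_{\c^h}$. This is a directed graph automorphism that preserves edge orientations, so it permutes cut paths and leaves the induced Coxeter element invariant, placing each orbit inside a single fiber. The action is free on cut paths: by Theorem~\ref{thm:cut_path_and_tiles}(i), a cut path contains precisely one edge from the $2h$-cycle in $\TwoCov$ associated to each edge of $\Gamma$, and any non-trivial cyclic rotation shifts the $2h$ edges of such a cycle by a non-zero amount, fixing none of them. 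Hence every orbit has size exactly $h$, and combined with surjectivity each fiber contains at least $h$ cut paths.

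The main obstacle is the matching upper bound $|\Phi^{-1}(c')| \leq h$. Fix $c'$ and its orientation of $\Gamma$. The tile constraints of Theorem~\ref{thm:cut_path_and_tiles}(ii)--(iii) link the choices for neighboring edges of $\Gamma$: at a degree-$2$ vertex $s_j$, once $e_\kappa$ is chosen, the tile containing $e_\kappa$ in which $s_j$ plays the role dictated by the orientation (source or sink) is uniquely determined, and $\widetilde e_\kappa$ is forced as the boundary edge of that tile involving $s_j$'s other neighbor; the analogous propagation at a degree-$3$ vertex uses the pair of tiles of part (iii), where the two constraints are compatible regardless of which of the three incident edges is fixed first. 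Since $\Gamma$ is a tree, rooting at any edge and propagating around $\Gamma$ uniquely determines all $n-1$ edges of the cut path from the single initial choice. There are only $h$ possible initial choices (one per edge of the oriented $2h$-cycle in the direction imposed by the orientation), so $|\Phi^{-1}(c')| \leq h$. Combining with the lower bound and Step~1 yields $|\mathsf{CP}(\TwoCov)| = 2^{n-1}h$.
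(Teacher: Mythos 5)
Your proposal is correct, and its decisive step is the same as the paper's: the paper also computes $|\Phi^{-1}(c)|=h$ by fixing one of the $h$ bottom vertices labelled $s_1$ and propagating through the tiles supplied by Theorem~\ref{thm:cut_path_and_tiles}, the orientation $\Gamma_c$ forcing at each mesh which of the two candidate edges lies in the cut path. Where you genuinely differ is in separating existence from uniqueness: the paper's single constructive sweep is asserted to output, for each initial choice, an actual cut path mapping to $c$ (the existence claim is not spelled out), whereas you use the propagation only for the upper bound $|\Phi^{-1}(c)|\le h$ and obtain the lower bound from the free $\Z/h\Z$-rotation of the cylinder, freeness following from the uniqueness in Theorem~\ref{thm:cut_path_and_tiles}~i) since a nontrivial rotation fixes no vertex; you also make explicit the count $|\mathsf{Cox}(W,S)|=2^{n-1}$ that the paper leaves tacit. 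Two small points to tighten: in the surjectivity step, Lemma~\ref{lem:drawing_twocov_and_iso} gives only a directed-graph isomorphism $\tTwoCov\cong\TwoCov$, so you should note that the natural isomorphism respects the labelling $\mathsf g$ (it comes from realizing $\tc^h$ inside $\mathcal G_{\c^\infty}$ up to commutations), as otherwise the transported primary cut path need not induce $c'$; and when you say the propagation determines all $n-1$ edges of the cut path, you implicitly use that a cut path has no further edges, which holds because every edge of $\TwoCov$ joins letters of non-commuting, hence $\Gamma$-adjacent, generators and Theorem~\ref{thm:cut_path_and_tiles}~i) allows only one edge per $\Gamma$-edge.
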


\begin{proof}
	We only prove $|\Phi^{-1}(c)|=h$. There are $h$ choices in $\TwoCov$ to pick a 
	vertex~$\sigma_1$ with $\mathsf{g}(\sigma_1)=s_1$. Now~$\sigma_1$ determines a 
	unique tile $T_1$ with $\sigma_1\in\partial T_1$ and there is a unique directed 
	edge~$e_1^\kappa$ of~$\partial T_1$ that reflects the order of~$s_1$ and~$s_2$ 
	in~$c$. Now consider the unique tile~$T_2$ whose vertices map to $s_2$, $s_3$ 
	and $s_4$ under~$\mathsf g$ such that the orientation of $T_1\cap T_2$ reflects
	the order of~$s_2$ and~$s_3$ in~$c$ and proceed similarly with the following generators 
	until all generators have been considered. This process determines a unique 
	cut path~$\kappa$ with~$\Phi(\kappa)=c$ after choosing one of the~$h$ possible
	initial vertices~$\sigma$ at the bottom of $\TwoCov$.  
\end{proof}

\subsection{Crossings of cut paths}\label{ssec:crossingcutpaths}

\begin{definition}[crossing of cut paths, initial and final side, crossing tile]\label{def:crossing}
\hfill\break
Let~$(W,S,\c)$ be a Coxeter triple and~$\kappa_\c\in\mathsf{CP}(\TwoCov)$ be the associated primary cut path.
\begin{compactenum}[i)]
	\item A cut path $\kappa$ \Dfn{crosses} $\kappa_\c$ if 
		$\mathsf{tile}(\kappa)\cap\mathsf{tile}(\kappa_\c)\neq \varnothing$ and there are edges $e_1,e_2\in\kappa$ 
		with $e_1\in\mathcal C_{\wo^\c}$ and $e_2\in\mathcal C_{\psi(\wo^\c)}$.
	\item Let~$\kappa$ be a cut path that crosses~$\kappa_\c$. The \Dfn{initial side} of~$\kappa$ is the 
		connected component of $\TwoCov\setminus (\kappa_\c\cup \kappa^*_\c)$ that contains the edge of 
		$\kappa\setminus(\kappa_\c\cup\kappa_\c^*)$ whose midpoint has minimal third coordinate. The \Dfn{final side}
		of~$\kappa$ is the connected component of $\TwoCov\setminus (\kappa_\c\cup \kappa^*_\c)$ that is 
		not the initial side of~$\kappa$.
	\item Let~$\kappa$ be a cut path that crosses~$\kappa_\c$. The \Dfn{crossing tile} $T^{\kappa,\c}$ 
		of~$\kappa$ in~$\TwoCov$ is the first tile of $\mathsf{tile}(\kappa)$ (with respect to the 
		bottom-to-top order) that contains an edge of $\kappa$ in the final side of~$\kappa$.
\end{compactenum}
\end{definition}

\begin{definition}[Initial and final segments]\label{def:init_final_segment}
\hfill\break
Let~$(W,S,\c)$ be a Coxeter triple,~$\kappa_\c\in \mathsf{CP}(\TwoCov)$ be the associated primary cut path, 
$T^\c \in \mathsf{tile}(\kappa_\c)$ and~$\kappa\in\mathsf{CP}(\TwoCov)$ with $\mathsf{tile}(\kappa)=\{T_1,\ldots, T_{n-2}\}$.
\begin{compactenum}[i)]
	\item Let $i\in[n-1]$. The \Dfn{initial segment of~$\kappa$ up to $T_i$} is defined as 
		\[
			\set{e\in\kappa}{e\in\partial T_j \text{ for }j\in[i-1]^{\textcolor{white}{A\!\!\!}}}
			\cup
			\set{\{u,v\}\in\kappa}{\mathsf g(u)=s_1\text{ or }\mathsf g(v)=s_1^{\textcolor{white}{A\!\!\!}}}
		\]
		and the \Dfn{final segment of~$\kappa$ starting at~$T_i$} is defined as
		\[
			\set{e\in\kappa}{e\in\partial T_j \text{ with }j>i^{\textcolor{white}{A\!\!\!}}}
			\cup
			\set{\{u,v\}\in\kappa}{\mathsf g(u)=s_p\text{ or }\mathsf g(v)=s_p^{\textcolor{white}{A\!\!\!}}}
		\]
		where $s_1,\ldots,s_p$ are successive vertices of~$\Gamma$ along a path of maximum length.
	\item Let~$e_1 = (u_1,v_1)$ and $e_2 = (u_2,v_2)$ be the distinct edges of~$\partial T^\c \setminus \kappa_\c$ 
		such that the midpoint of~$e_1$ has smaller third coordinate than the midpoint of~$e_2$. The connected 
		component of $\TwoCov\setminus (\kappa_\c\cup \kappa^*_\c)$ that contains~$e_2$ is denoted by~$\mathsf{out}(T^\c)$ and the other component is denoted by~$\mathsf{in}(T^\c)$. 
	\item Let~$I(T^\c)$ be the number of distinct initial segments of cut paths~$\kappa$ up 
		to~$T^\c$ with edges contained in $\TwoCov\setminus \mathsf{out}(T^\c)$.
	\item Let $F(T^\c)$ be the number of distinct final segments of cut paths~$\kappa$ starting at~$T^\c$ that contain~$e_2$.
\end{compactenum}
\end{definition}

\noindent
In Definitions~\ref{def:crossing} and~\ref{def:init_final_segment}, the primary cut path $\kappa_\c$ can be replaced by 
any cut path $\kappa\in\mathsf{CP}(\TwoCov)$. Moreover, concatenation of an initial segment counting towards $I(T^\c)$ 
that differs from the initial segment of $\kappa_\c$ and a final segment counting towards~$F(T^\c)$ yields a cut path 
that crosses $\kappa_\c$.

\begin{example}[Example~\ref{expl:cut_paths} continued]\label{expl:I(T_i)}
	\hfill\break
	We illustrate Definitions~\ref{def:crossing} and~\ref{def:init_final_segment} in Figure~\ref{fig:universal}.  
	The cut path~$\widetilde \kappa_1$ crosses~$\kappa_1$ with crossing tile~$T^{\kappa_1}_2$ 
	while~$\widetilde \kappa_2$ does not cross~$\kappa_2$. A straightforward counting of crossing 
	cut paths verifies
	\[
		I(T^{\kappa_1}_1)=2
		\quad\text{and}\quad
		I(T^{\kappa_2}_2)=4
	\qquad\quad
	\text{as well as}
	\qquad\quad
		I(T^{\kappa_2}_1)=1
		\quad\text{and}\quad
		I(T^{\kappa_2}_2)=2.
	\]
	The reasoning of Section~\ref{sec:counting_in_type_a} yields formulae for all positive integers~$i$:
	\[
		I(T^{c_{Lod}}_{i}) = I(T^{\operatorname{rev}(c_{Lod})}_{i}) = 2^i
		\qquad\text{and}\qquad
		I(T^{c_{alt}}_{i}) 
		=  \begin{cases}
				\binom{2j}{j}, 				& \text{if $i=2j$,}\\[2mm]
				\frac{1}{2}\binom{2j}{j},	& \text{if $i=2j-1$.}
			\end{cases}
	\]
\end{example}

\begin{figure}[!htbp]
\begin{center}
	\centerline{\begin{tikzpicture}[scale=0.6,decoration={snake,amplitude=.2mm,segment length=1mm},]

  \node (left1) at (4.125,0.1875) {};
  \node (left2a) at (4.125,3.1875) {};
  \node (left2b) at (4.125,2.8125) {};
  
  \draw[thick] (4.1,-0.5) -- (4.1,5);
  \draw[thick] (4.1,-0.5) -- (14.8,-0.5);
  \draw[thick] (4.1,5) -- (14.8,5);
  \draw[thick] (14.8,-0.5) -- (14.8,5);
  

  \node (s2-1) at (5,1.5) {\small$s_2$} edge[<-,thick,densely dotted] (left1) edge[<-,thick,densely dotted] (left2b);
  \node (s4-1) at (5,4.5) {\small$s_4$} edge[<-,thick,densely dotted] (left2a);
  \node (s1-1) at (6,0) {\small$s_1$} edge[<-,thick,decorate] (s2-1);
  \node (s3-1) at (6,3) {\small$s_3$} edge[<-,thick] (s2-1)  edge[<-,thick] (s4-1);
    
  \node (s2-2) at (7,1.5) {\small$s_2$} edge[<-,thick,decorate] (s3-1)  edge[<-,thick] (s1-1);
  \node (s4-2) at (7,4.5) {\small$s_4$} edge[<-,thick] (s3-1);
  \node (s1-2) at (8,0) {\small$s_1$} edge[<-,thick] (s2-2);
  \node (s3-2) at (8,3) {\small$s_3$} edge[<-,thick] (s2-2)  edge[<-,thick,decorate] (s4-2);
  
  \node (s2-3) at (9,1.5) {\small$s_2$} edge[<-,thick] (s3-2)  edge[<-,thick] (s1-2);
  \node (s4-3) at (9,4.5) {\small$s_4$} edge[<-,thick] (s3-2);


  \node (s1-3) at (10,0) {\small$s_1$} edge[<-,thick] (s2-3);
  \node (s3-3) at (10,3) {\small$s_3$} edge[<-,thick] (s4-3)  edge[<-,thick] (s2-3);
  \node (s2-4) at (11,1.5) {\small$s_2$} edge[<-,thick] (s1-3)  edge[<-,thick] (s3-3);
  \node (s4-4) at (11,4.5) {\small$s_4$} edge[<-,thick,decorate] (s3-3);

  \node (s1-4) at (12,0) {\small$s_1$} edge[<-,thick] (s2-4);
  \node (s3-4) at (12,3) {\small$s_3$} edge[<-,thick,decorate] (s2-4)  edge[<-,thick] (s4-4);
  \node (s2-5) at (13,1.5) {\small$s_2$} edge[<-,thick,decorate] (s1-4)  edge[<-,thick] (s3-4);
  \node (s4-5) at (13,4.5) {\small$s_4$} edge[<-,thick] (s3-4);
  
  \node (s1-5) at (14,0) {\small$s_1$} edge[<-,thick] (s2-5);
  \node (s3-5) at (14,3) {\small$s_3$} edge[<-,thick] (s2-5)  edge[<-,thick] (s4-5);

  \node (s2-6b) at (14.75,1.125) {} edge[thick,densely dotted] (s1-5);
  \node (s2-6a) at (14.75,1.875) {} edge[thick,densely dotted] (s3-5);
  \node (s4-6) at (14.75,4.125) {} edge[thick,densely dotted] (s3-5);

  \node[red,label=south:$\color{red}{\kappa_1}$] at (5,-0.5) {};
  \draw[red,thick,->,densely dashed] (5,-0.5) -- (5,0.15) -- (8,4.65) -> (8,5);
  \fill[blue,fill opacity=0.25,rounded corners] (5.2,1.5) -- (6,0.3) -- (6.8,1.5) -- (6,2.8) -- cycle;
  \fill[blue,fill opacity=0.25,rounded corners] (6.2,3) -- (7,1.8) -- (7.8,3) -- (7,4.2) -- cycle;
  \node (tile1) at (6.2,1.15) {\tiny{$T^{\kappa_1}_1$}};
  \node (tile1) at (7.1,2.5) {\tiny{$T^{\kappa_1}_2$}};
  \node[red,label=south:$\color{red}{\widetilde \kappa_1}$] at (7,-0.33) {};
  \draw[red,thick,->,dashdotdotted] (7,-0.5) -- (7,0.15) -- (8,1.85) -- (6,4.85 )-> (6,5);

  \node[red,label=south:$\color{red}{\kappa^*_1}$] at (13,-0.35) {};
  \draw[red,thick,->,densely dashed] (13,-0.5) -- (13,0.15) -- (10,4.65) -> (10,5);
  
\end{tikzpicture}$\qquad$\begin{tikzpicture}[scale=0.6,decoration={snake,amplitude=.2mm,segment length=1mm},]

  \node (left1) at (4.125,0.1875) {};
  \node (left2a) at (4.125,3.1875) {};
  \node (left2b) at (4.125,2.8125) {};
  
  \draw[thick] (4.1,-0.5) -- (4.1,5);
  \draw[thick] (4.1,-0.5) -- (14.8,-0.5);
  \draw[thick] (4.1,5) -- (14.8,5);
  \draw[thick] (14.8,-0.5) -- (14.8,5);
  

  \node (s2-1) at (5,1.5) {\small$s_2$} edge[<-,thick,densely dotted] (left1) edge[<-,thick,densely dotted] (left2b);
  \node (s4-1) at (5,4.5) {\small$s_4$} edge[<-,thick,densely dotted] (left2a);
  \node (s1-1) at (6,0) {\small$s_1$} edge[<-,thick] (s2-1);
  \node (s3-1) at (6,3) {\small$s_3$} edge[<-,thick] (s2-1)  edge[<-,thick] (s4-1);
  
  \node (s2-2) at (7,1.5) {\small$s_2$} edge[<-,thick,decorate] (s3-1)  edge[<-,thick,decorate] (s1-1);
  \node (s4-2) at (7,4.5) {\small$s_4$} edge[<-,thick,decorate] (s3-1);
  \node (s1-2) at (8,0) {\small$s_1$} edge[<-,thick] (s2-2);
  \node (s3-2) at (8,3) {\small$s_3$} edge[<-,thick] (s2-2)  edge[<-,thick] (s4-2);
  
  \node (s2-3) at (9,1.5) {\small$s_2$} edge[<-,thick] (s3-2)  edge[<-,thick] (s1-2);
  \node (s4-3) at (9,4.5) {\small$s_4$} edge[<-,thick] (s3-2);


  \node (s1-3) at (10,0) {\small$s_1$} edge[<-,thick] (s2-3);
  \node (s3-3) at (10,3) {\small$s_3$} edge[<-,thick] (s4-3)  edge[<-,thick] (s2-3);
  \node (s2-4) at (11,1.5) {\small$s_2$} edge[<-,thick] (s1-3)  edge[<-,thick] (s3-3);
  \node (s4-4) at (11,4.5) {\small$s_4$} edge[<-,thick] (s3-3);

  \node (s1-4) at (12,0) {\small$s_1$} edge[<-,thick,decorate] (s2-4);
  \node (s3-4) at (12,3) {\small$s_3$} edge[<-,thick,decorate] (s2-4)  edge[<-,thick,decorate] (s4-4);
  \node (s2-5) at (13,1.5) {\small$s_2$} edge[<-,thick] (s1-4)  edge[<-,thick] (s3-4);
  \node (s4-5) at (13,4.5) {\small$s_4$} edge[<-,thick] (s3-4);
  
  \node (s1-5) at (14,0) {\small$s_1$} edge[<-,thick] (s2-5);
  \node (s3-5) at (14,3) {\small$s_3$} edge[<-,thick] (s2-5)  edge[<-,thick] (s4-5);

  \node (s2-6b) at (14.75,1.125) {} edge[thick,densely dotted] (s1-5);
  \node (s2-6a) at (14.75,1.875) {} edge[thick,densely dotted] (s3-5);
  \node (s4-6) at (14.75,4.125) {} edge[thick,densely dotted] (s3-5);

  \node[red,label=south:$\color{red}{\kappa_2}$] at (6.5,-0.5) {};
  \draw[red,thick,->,densely dashed] (6.5,-0.5) -> (6.5,5);
  \fill[blue,fill opacity=0.25,rounded corners] (5.2,1.5) -- (6,0.3) -- (6.8,1.5) -- (6,2.8) -- cycle;
  \fill[blue,fill opacity=0.25,rounded corners] (6.2,3) -- (7,1.8) -- (7.8,3) -- (7,4.2) -- cycle;
  \node (tile1) at (6.0,1.5) {\tiny{$T^{\kappa_2}_1$}};
  \node (tile1) at (7.1,3.0) {\tiny{$T^{\kappa_2}_2$}};
  \node[red,label=south:$\color{red}{\widetilde \kappa_2}$] at (5.5,-0.33) {};
  \draw[red,thick,->,dashdotdotted] (5.5,-0.5) -> (5.5,5);

  \node[red,label=south:$\color{red}{\kappa^*_2}$] at (11.5,-0.35) {};
  \draw[red,thick,->,densely dashed] (11.5,-0.5) -> (11.5,5);
  
\end{tikzpicture}}
	\caption{\label{fig:universal} Two planar drawings of the 2-cover for~$\Sigma_5$. The cut 
	path~$\widetilde \kappa_1$ crosses the primary cut path~$\kappa_1$ (left) and the cut 
	path~$\widetilde \kappa_2$ does not cross the primary cut path~$\kappa_2$ (right).}
\end{center}
\end{figure}
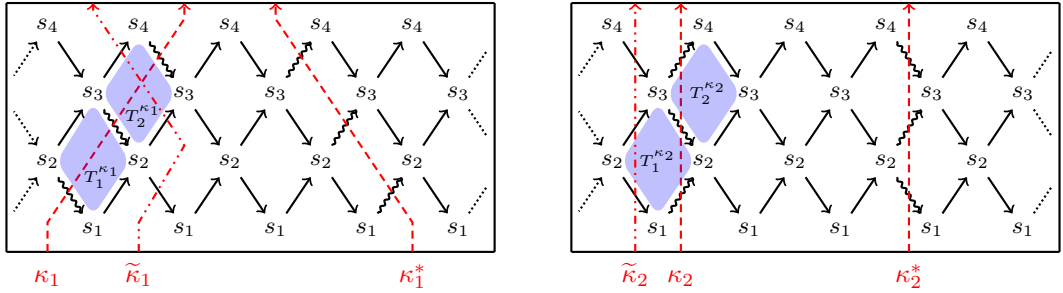

\subsection{Enumerating $c$-singletons}\label{subsec:enumerate_c_singletons}

\begin{theorem}\label{thm:singleton_cuts}
	Let $(W,S,\c)$ be Coxeter triple with associated primary cut path~$\kappa_\c$ and  set of tiles
	$\mathsf{tile}(\kappa_\c)=\{T^c_1,\ldots, T^c_{n-2}\}$. The cardinality of the Cambrian acyclic domain $\mathsf{Acyc}_\c$ is
	\[
		\mathsf S_c = 2^{n-2}(h+1)-\sum_{i\in[n-2]} 2^{n-2-i}I(T_i^c).
	\]
\end{theorem}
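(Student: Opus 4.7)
The plan is to follow the template developed for type~$A$ in Section~\ref{sec:counting_in_type_a}, generalising it via the $2$-cover framework of Sections~\ref{subsec:2covers_cut_paths}--\ref{ssec:crossingcutpaths}. By Lemma~\ref{lem:c-singletons_as_cut_paths}, the task is to enumerate cut paths of $\TwoCov$ that contain no edge of $\mathcal C_{\psi(\bwoc)}$.

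The first step is to parametrise cut paths in a bottom-to-top fashion, mimicking the proof of Corollary~\ref{cor:number_of_cut_paths}: a cut path is determined by its directed edge projecting to the edge $\{s_1,s_2\}\in\Gamma$ together with a binary choice made, one per tile, as it climbs through $\mathsf{tile}(\kappa)$. Restricting to cut paths whose orientation on $\{s_1,s_2\}$ agrees with $c$ and whose initial edge lies in the closure of the $\bwoc$-side of $\kappa_c\cup\kappa_c^*$ yields a family of $(h+1)\cdot 2^{n-2}$ candidate cut paths; the extra factor $+1$ arises because cutting the cylindrical $\TwoCov$ open along $\kappa_c\cup\kappa_c^*$ duplicates a bottom vertex, giving $h+1$ admissible initial positions. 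This enlargement plays the role of the trapeze network $\mathcal T_c$ in the type-$A$ analysis.

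Next, these candidates are sorted according to whether they ever enter $\mathcal C_{\psi(\bwoc)}$. A candidate that stays in $\TwoCov\setminus\mathcal C_{\psi(\bwoc)}$ is by Lemma~\ref{lem:c-singletons_as_cut_paths} a $c$-singleton. A candidate that enters $\mathcal C_{\psi(\bwoc)}$ must cross $\kappa_c$, hence by Definition~\ref{def:crossing} has a unique first crossing tile $T_i^c\in\mathsf{tile}(\kappa_c)$. Decomposing $\kappa$ at $T_i^c$, the initial segment lies in $\TwoCov\setminus\mathsf{out}(T_i^c)$ and is counted by $I(T_i^c)$, while the final segment past $T_i^c$ traverses the $n-2-i$ tiles of $\mathsf{tile}(\kappa)$ above $T_i^c$, which still admit independent binary choices by Theorem~\ref{thm:cut_path_and_tiles}, thus contributing a factor of $2^{n-2-i}$. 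Summing over $i\in[n-2]$ produces the correction $\sum_{i\in[n-2]} 2^{n-2-i}\,I(T_i^c)$, and subtracting from $(h+1)\cdot 2^{n-2}$ yields the announced identity.

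The main obstacle will be the rigorous justification of the count $(h+1)\cdot 2^{n-2}$ for the candidate family, in particular the correct accounting of initial positions after cutting the cylindrical $2$-cover open along $\kappa_c\cup\kappa_c^*$ and the verification that each binary tile-extension remains available. A secondary subtlety, relevant in types $D$ and $E$, is the treatment of the unique degree-$3$ vertex of $\Gamma$: by Theorem~\ref{thm:cut_path_and_tiles}(iii) a cut path uses two adjacent tiles coherently at this branching site, so a single binary decision determines both tiles simultaneously, which preserves the binary-choice factor in the tile-by-tile enumeration but must be handled with care when defining initial and final segments across such a pair.
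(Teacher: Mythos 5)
Your outline is a genuinely different route from the paper's (which double counts \emph{all} $2^{n-1}h$ cut paths of Corollary~\ref{cor:number_of_cut_paths} and uses the involution induced by $\psi$ to pair cut paths crossing $\kappa_\c$ with those crossing $\kappa_\c^*$ and to pair the two sides, arriving at $\mathsf S_c+\mathsf Q_c=2^{n-2}h+1$ before invoking Corollary~\ref{cor:number_crossing_cut_paths}), but as written it has a gap at its core: the family of $(h+1)2^{n-2}$ ``candidates'' is never constructed. The restriction you describe cannot be the right one. The $c$-singleton cut paths of Lemma~\ref{lem:c-singletons_as_cut_paths} realize \emph{both} orientations of the edge over $\{s_1,s_2\}$ (already in rank~$2$ the $h+1$ singleton cut paths are consecutive edges of the $2h$-cycle and alternate orientation; in Figure~\ref{fig:cut_paths} the cut paths for the singletons $e$ and $s_1$ differ exactly there), so fixing that orientation to agree with $c$ discards singletons and the identity ``good candidates $=\mathsf{Acyc}_\c$'' fails. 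Moreover, after ``cutting the cylinder open along $\kappa_\c\cup\kappa_\c^*$'' your objects are no longer cut paths of $\TwoCov$, so Lemma~\ref{lem:c-singletons_as_cut_paths} does not apply to them; the claimed count $(h+1)2^{n-2}$, which you yourself flag as the main obstacle, is not a technicality to be postponed --- it is the substance of the theorem.

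The second half is also internally inconsistent. The assertion that a candidate entering $\mathcal C_{\psi(\bwoc)}$ ``must cross $\kappa_\c$'' is false: a cut path may agree with $\kappa_\c$ below some tile $T_i^c$ and exit through the edge $e_2$ of Definition~\ref{def:init_final_segment} without ever containing an edge of $\mathcal C_{\bwoc}$, hence without crossing in the sense of Definition~\ref{def:crossing}. Precisely because of such paths, the number of cut paths crossing $\kappa_\c$ with crossing tile $T_i^c$ is $\bigl(I(T_i^c)-1\bigr)2^{n-2-i}$ and not $I(T_i^c)\,2^{n-2-i}$: the initial segment of $\kappa_\c$ itself is counted by $I(T_i^c)$ but never belongs to a crossing path. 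So if your bad set really consisted of the crossing cut paths, your computation would yield $\mathsf S_c=(h+1)2^{n-2}-\sum_{i}\bigl(I(T_i^c)-1\bigr)2^{n-2-i}$, which exceeds the stated formula by $2^{n-2}-1$. To make the two halves fit you must include these non-crossing ``branch-off'' paths among the bad candidates, which in turn forces them into the candidate family and changes the unproved count $(h+1)2^{n-2}$. Until the candidate family is defined precisely, shown to contain exactly the singleton cut paths plus the objects counted by $\sum_{i\in[n-2]}2^{n-2-i}I(T_i^c)$, and enumerated, the argument does not establish the theorem; the paper's double-counting with the $\psi$-involution is exactly the device that makes this bookkeeping, including the $-(2^{n-2}-1)$ of Corollary~\ref{cor:number_crossing_cut_paths}, come out correctly.
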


\begin{proof}
	We count the cut paths of $\mathsf {CP}(\TwoCov)$ twice. By Corollary~\ref{cor:number_of_cut_paths}, 
	the cardinality of~$\mathsf {CP}(\TwoCov)$ equals $2^{n-1}h$. 
	On the other hand, $\kappa \in \mathsf {CP}(\TwoCov)$ satisfies precisely one of the following statements:
	\begin{compactenum}[i)]
		\item $\kappa$ crosses~$\kappa_c$ or~$\kappa_c^*$, but not both;
		\item $\kappa\subseteq\mathcal C_{\wo^\c}\cup\kappa_c\cup\kappa_c^*$ 
			or $\kappa\subseteq\mathcal C_{\psi(\wo^\c)}\cup\kappa_c\cup\kappa_c^*$ but $\kappa\not\in\{\kappa_c,\kappa_c^*\}$;
		\item $\kappa\in\{\kappa_c,\kappa_c^*\}$.
	\end{compactenum}
	We first claim that the number~$\mathsf Q_c$ of cut paths that cross~$\kappa_c$ equals the number of cut paths 
	that cross~$\kappa_c^*$. 
	Indeed, the automorphism~$\psi$ maps~$c$ to~$\psi(c)$ and induces an involution~$\varphi$ between $\mathsf{tile}(\kappa_c)$ 
	and $\mathsf{tile}(\kappa_c^*)$ that extends to an involution between cut paths that cross~$\kappa_c$ and cut paths that 
	cross~$\kappa_c^*$ (where $\varphi(T^{\kappa, c})$ is the crossing tile of $\varphi(\kappa)$). 
	Thus, the number of cut paths satisfying~i) equals~$2\mathsf Q_c$. 
	Moreover, this involution extends to an involution on~$\mathsf{CP}(\TwoCov)$ that maps cut paths contained 
	in~$\mathcal C_{\woc}\cup\kappa_c\cup\kappa_c^*$ to cut paths contained in $\mathcal C_{\psi(\woc)}\cup\kappa_c\cup\kappa_c^*$. 
	As~$\kappa_c$ and~$\kappa_c^*$ are the only cut paths that correspond simultaneously to a $c$-singleton and a $\psi(c)$-singleton, 
	the number of cut paths satisfying~ii) equals $2\mathsf S_c-4$ by Lemma~\ref{lem:c-singletons_as_cut_paths}. 
	Thus we established

	\begin{equation}\label{eqn:double_counting}
		2\mathsf Q_c + (2\mathsf S_c - 4) + 2 = 2^{n-1}h
		\qquad\text{or equivalently}\qquad
		\mathsf S_c = 2^{n-1}h - \mathsf Q_c +1.
	\end{equation}
	
	To analyze $\mathsf Q_c$, we partition the cut paths that cross~$\kappa_c$ according to their crossing
	tile~$T_i^c$ and observe that $I(T_i^c)$ exceeds the number of cut paths with crossing tile~$T_i^c$ 
	by one (the initial segment of~$\kappa_c$ is counted by~$I(T_i^c)$ but it is not the initial segment of 
	a cut path with crossing tile~$T_i^c$). The number of final segments~$F(T_i)$ of cut paths starting 
	at~$T_i^c$ satisfies $F(T_i)=2^{n-2-i}$ because each final segment consists of $n-2-i$ tiles (not 
	counting~$T_i^c$) and there are~$2$ valid choices to exit each tile.
	This gives

	\[
		\mathsf Q_c 
			= \sum_{i\in[n-2]}\left(I(T_i^c)-1\right) F(T_i^c)
			= \sum_{i\in[n-2]}2^{n-2-i}I(T_i^c) - \left(2^{n-2}-1\right).
	\]

	Substitution in Equation~(\ref{eqn:double_counting}) yields the claim.
\end{proof}

At this point, we could characterize the Coxeter elements of any irreducible finite Coxeter system~$(W,S)$ that minimize 
the cardinality~$\mathsf S_\c$ of the Cambrian acyclic domain~$\mathsf{Acyc}_\c$. Instead, we first 
provide explicit formulae for~$\mathsf S_\c$ of two families of Cambrian acyclic domains in Section~\ref{sec:examples} 
and characterize the Coxeter elements of~$(W,S)$ that minimize and maximize~$\mathsf S_\c$ in Section~\ref{sec:bounds}
where we use the following result obtained in the previous proof.

\begin{corollary}\label{cor:number_crossing_cut_paths}
Let $(W,S)$ be an irreducible finite Coxeter system of rank~$n$ and $c\in \mathsf {Cox}(W,S)$.
Then the number of cut paths~$\kappa$ that cross~$\kappa_c$ is

\[
	\mathsf Q_c
	=
	\sum_{i\in[n-2]}2^{n-2-i}I(T_i^c) - \left(2^{n-2}-1\right).
\]
\end{corollary}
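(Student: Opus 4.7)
The plan is to derive the formula by partitioning the crossing cut paths according to their crossing tile and counting the initial and final segments that compose them. This is essentially the intermediate step used in the proof of Theorem~\ref{thm:singleton_cuts}, so I will organize it as a standalone combinatorial argument.

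First, I would observe that any cut path $\kappa$ that crosses $\kappa_c$ has a well-defined crossing tile $T^{\kappa,c}\in\mathsf{tile}(\kappa_c)$ by Definition~\ref{def:crossing}. This gives a partition
\[
	\mathsf Q_c = \sum_{i\in[n-2]} \#\set{\kappa\in\mathsf{CP}(\TwoCov)}{T^{\kappa,c}=T_i^c}.
\]
So it suffices to show that the $i$-th summand equals $(I(T_i^c)-1)\cdot 2^{n-2-i}$.

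Next, I would argue that a cut path $\kappa$ with crossing tile $T_i^c$ decomposes uniquely into (a) its initial segment up to $T_i^c$, which lies in $\TwoCov\setminus\mathsf{out}(T_i^c)$ by definition of the crossing tile, and (b) its final segment starting at $T_i^c$, which contains the edge $e_2$ of $\partial T_i^c$ entering $\mathsf{out}(T_i^c)$. Conversely, any such pair glues back into a cut path that crosses $\kappa_c$ exactly at $T_i^c$, provided the initial segment differs from the initial segment of $\kappa_c$ itself (otherwise the resulting path would agree with $\kappa_c$ on the first $i$ tiles and then diverge, still yielding a valid cut path distinct from $\kappa_c$; I need to be careful here). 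The count of valid initial segments is therefore $I(T_i^c)-1$: we remove exactly one, namely the initial segment of $\kappa_c$, since that is the only one that does not produce a cut path with crossing tile $T_i^c$ (gluing it with the final segment containing $e_2$ yields $\kappa_c$ itself, which does not cross $\kappa_c$).

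For the final segment count $F(T_i^c)=2^{n-2-i}$, I would appeal to Theorem~\ref{thm:cut_path_and_tiles} and Corollary~\ref{cor:tiles_for_cut_path}: after the crossing tile $T_i^c$, the cut path must traverse the $n-2-i$ remaining tiles of $\mathsf{tile}(\kappa)$ in bottom-to-top order, and at each such tile there are exactly two outgoing choices compatible with being a cut path (the two edges of the tile's boundary not yet fixed). This gives $2^{n-2-i}$ final segments, independent of any earlier choices.

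Combining these counts yields $\mathsf Q_c = \sum_{i\in[n-2]}\bigl(I(T_i^c)-1\bigr)\,2^{n-2-i}$, and the geometric sum $\sum_{i=1}^{n-2} 2^{n-2-i} = 2^{n-2}-1$ gives the claimed formula. The main obstacle is the off-by-one bookkeeping around $I(T_i^c)-1$: one must verify that the initial segment of $\kappa_c$ is the unique initial segment counted by $I(T_i^c)$ that does not produce a cut path with crossing tile exactly $T_i^c$, which relies on the fact that $\kappa_c$ is itself a cut path and that the final segment containing $e_2$ is uniquely determined at each tile when concatenated with $\kappa_c$'s initial segment.
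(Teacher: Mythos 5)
Your overall route is the same as the paper's: partition the crossing cut paths by their crossing tile, count the paths with crossing tile $T_i^c$ as $(I(T_i^c)-1)\cdot F(T_i^c)$ with $F(T_i^c)=2^{n-2-i}$, and evaluate the geometric sum. However, the one step you yourself flagged -- why exactly one initial segment must be discarded -- is settled with a false claim. You assert that gluing the initial segment of $\kappa_c$ with ``the'' final segment containing $e_2$ yields $\kappa_c$ itself. It cannot: by Definition~\ref{def:init_final_segment}, $e_1$ and $e_2$ are the two edges of $\partial T_i^c\setminus\kappa_c$, so no cut path containing $e_2$ can equal $\kappa_c$; the glued object is a genuine cut path distinct from $\kappa_c$, exactly as your parenthetical hedge suspected. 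Moreover ``the final segment containing $e_2$'' is not unique -- there are $F(T_i^c)=2^{n-2-i}$ of them, independently of which initial segment is used, so your closing sentence about uniqueness does not hold either.

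The correct reason for the ``$-1$'', and the one the paper relies on, is different: a cut path whose initial segment up to $T_i^c$ coincides with that of $\kappa_c$ has no edge outside $\kappa_c\cup\kappa_c^*$ below $T_i^c$, so its lowest edge not in $\kappa_c\cup\kappa_c^*$ is $e_2$ (or a higher edge) and lies in $\mathsf{out}(T_i^c)$. By Definition~\ref{def:crossing} its initial side is then $\mathsf{out}(T_i^c)$, its final side is $\mathsf{in}(T_i^c)$, and hence its crossing tile -- if it crosses $\kappa_c$ at all -- lies strictly above $T_i^c$. Thus the initial segment of $\kappa_c$ is the unique segment counted by $I(T_i^c)$ that is not the initial segment of a cut path with crossing tile $T_i^c$, and such glued paths are either non-crossing or are accounted for in the summand of their true, higher, crossing tile; without this the partition does not close up. With this repair your argument is exactly the paper's (the identity is extracted verbatim from the proof of Theorem~\ref{thm:singleton_cuts}); the remaining ingredients, $F(T_i^c)=2^{n-2-i}$ via the two admissible exits per tile and $\sum_{i=1}^{n-2}2^{n-2-i}=2^{n-2}-1$, are correct as you state them.
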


\begin{remark}
For reducible finite Coxeter groups, the cardinality of a Cambrian acyclic domain is the product of the cardinalities
of the acyclic domains for each irreducible component with respect to the corresponding parabolic Coxeter elements.
\end{remark}

\section{Examples}\label{sec:examples}

In this section, we determine explicit formulae for the cardinality~$\mathsf S_c$ of a Cambrian acyclic 
domain~$\mathsf{Acyc}_\c$ when~$c$ is a Coxeter element that minimizes or 
maximizes the total number of sources and sinks of $\Gamma_c$.
We denote the relevant subsets of $\mathsf{Cox}(W,S)$ by~$\mathsf{Cox_{min}}$ and $\mathsf{Cox_{max}}$, 
standard examples in type~$A$ are $c_{Lod}\in\mathsf {Cox_{min}}$ and $c_{alt}\in\mathsf {Cox_{max}}$.
We call a Coxeter system \Dfn{path-like} if $\Gamma$ is a path.

\subsection{Maximum total number of sources and sinks}

If~$c$ provides a bipartition of~$\Gamma$ then every node of~$\Gamma_c$ is a source or a sink and there are~$n$ sources 
and sinks in total. Theorem~2.3 of~\cite{bergeron_isometry_2009} implies that~$\mathsf S_c$ does not depend 
on~$c$ as the associated associahedra~$\Asso_c$ are isometric. In particular, $\mathsf S_c$ depends only on the type and 
rank of~$(W,S)$. 

\begin{proposition}\label{prop:S_c_for_Cox_max}
	Let $(W,S)$ be an irreducible Coxeter system of rank~$n>1$ and $c\in \mathsf {Cox_{max}}$.
	\begin{compactenum}[i)]
		\item If $(W,S)$ is path-like then
			\[
				\mathsf S_c
				= \begin{cases}
					2^{n-2}(h+3)-n\cdot\binom{n-1}{\tfrac{n}{2}}, 				& \text{$n$ even,}\\[2mm]
					2^{n-2}(h+3)-\frac{2n-1}{2}\cdot\binom{n-1}{\tfrac{n-1}{2}}, & \text{$n$ odd.}
				\end{cases}
			\]
		\item If $(W,S)$ is of type $D_n$ then
		\[
			\mathsf S_c 
				= \begin{cases}
					2^{n-2}(h+3) - n\cdot\binom{n-1}{\tfrac{n}{2}} +\frac{1}{2}\cdot\binom{n-2}{\tfrac{n-2}{2}},	& \text{$n$ even,}\\[2mm]
					2^{n-2}(h+3) - (n-1)\binom{n-1}{\tfrac{n-1}{2}}	- \binom{n-3}{\tfrac{n-3}{2}},		& \text{$n$ odd.}
				  \end{cases}
		\]
		\item If $(W,S)$ is of type~$E_6$, $E_7$ or $E_8$ then
		\[
			\mathsf S_c
				= \begin{cases}
					2^{n-2}(h+3) - 2(n-2)\binom{n-2}{\tfrac{n-2}{2}} - 2\cdot\binom{n-4}{\tfrac{n-4}{2}} - (n-3)(n-4),	
							& \text{$n$ even,}\\[2mm]
					2^{n-2}(h+3) - (n-1)\binom{n-1}{\tfrac{n-1}{2}} + \binom{n-3}{\tfrac{n-3}{2}} - (n-3)(n-4),
							& \text{$n$ odd.}
				  \end{cases}
		\]
	\end{compactenum}
\end{proposition}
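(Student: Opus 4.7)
The plan is to apply Theorem~\ref{thm:singleton_cuts}, which expresses
\[
	\mathsf S_c = 2^{n-2}(h+1) - \sum_{i\in[n-2]} 2^{n-2-i} I(T_i^c),
\]
and thus reduces the task to evaluating the sum on the right for a convenient bipartite Coxeter element~$c$. Because \cite[Theorem~2.3]{bergeron_isometry_2009} guarantees that all elements of $\mathsf{Cox_{max}}$ produce isometric associahedra, $\mathsf S_c$ depends only on the type of $(W,S)$, so it suffices to fix a single alternating representative per type and compute the sum for it.

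For part~(i), the key observation is that the local structure of the primary cut path $\kappa_c$ inside $\TwoCov$ depends only on the fact that $\Gamma$ is a path and that $c$ alternates, not on the specific value of~$h$. Consequently the values $I(T_i^c)$ coincide with those listed in Example~\ref{expl:I(T_i)}, and the sum in Theorem~\ref{thm:singleton_cuts} is precisely the hypergeometric sum already handled for type~$A$ in Section~\ref{sec:counting_in_type_a} via Gosper's algorithm. Substituting the resulting evaluation into the above identity and invoking the algebraic rewriting $(n/2)\binom{n}{n/2} = n\binom{n-1}{n/2}$ for even~$n$, together with its analog for odd~$n$, then recovers the stated closed form with $h$ replacing $n+1$.

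For parts~(ii) and~(iii), the Coxeter graph $\Gamma$ has a single branch node: $s_{n-2}$ in type $D_n$ and $s_{n-3}$ in types $E_6$, $E_7$ and $E_8$. By Theorem~\ref{thm:cut_path_and_tiles}(iii) this branch locally perturbs the tile structure of $\TwoCov$ near the top of $\kappa_c$: tiles $T_i^c$ lying strictly below the branch still carry the path-like value of Example~\ref{expl:I(T_i)}, while a bounded number of tiles adjacent to the branch (one in $D_n$, two in the exceptional types) have modified values of $I(T_i^c)$ that I would enumerate directly from the Y-shape provided by Theorem~\ref{thm:cut_path_and_tiles}(iii). Substituting these modified values into Theorem~\ref{thm:singleton_cuts} and simplifying the remaining binomial sums via Gosper's algorithm should then yield the extra corrections $+\tfrac{1}{2}\binom{n-2}{(n-2)/2}$ or $-\binom{n-3}{(n-3)/2}$ in $D_n$, together with the additional linear correction $-(n-3)(n-4)$ in $E_n$.

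The main obstacle will be the precise bookkeeping of $I(T_i^c)$ at the few tiles near the branching, where the closed formula of Example~\ref{expl:I(T_i)} no longer applies: these modified values must be read off directly from the local picture of $\TwoCov$, and the resulting perturbed binomial sums must be shown to telescope into the stated closed forms in each parity class of~$n$. Away from the branch the computation is essentially verbatim the one carried out for type~$A$ in Section~\ref{sec:counting_in_type_a}, so the whole argument reduces to a finite local verification at a single vertex of~$\Gamma$, combined with the path-like hypergeometric summation that has already been worked out.
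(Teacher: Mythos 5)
Your proposal follows the paper's proof essentially verbatim: apply Theorem~\ref{thm:singleton_cuts}, use the isometry result of \cite{bergeron_isometry_2009} to fix one bipartite representative, take the path-like values of $I(T_i^c)$ from Example~\ref{expl:I(T_i)}, and evaluate the resulting sum with the hypergeometric identity of Section~\ref{sec:counting_in_type_a}, with only the tiles at the branch vertex needing separate treatment in types $D$ and $E$. The one piece you defer, and which the paper states explicitly, is the value of $I$ at those branch tiles, namely $I(T^c_{n-2})=I(T^c_{n-3})$ in type $D_n$ and $I(T^c_{n-3})=I(T^c_{n-4})$, $I(T^c_{n-2})=(n-3)(n-4)$ in types $E_6$, $E_7$, $E_8$; with these values your plan reproduces the stated formulae.
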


\begin{proof}
	We aim for explicit formulae for $I(T^c_{i})$, $1\leq i\leq n-2$, apply	Theorem~\ref{thm:singleton_cuts}
	and simplify the result using the closed form of a hypergeometric sum used in Section~\ref{sec:counting_in_type_a}.
	
	Suppose that $(W,S)$ is path-like and recall from Example~\ref{expl:I(T_i)} that
	\[
		I(T^c_{i}) =  \binom{2j}{j} \quad\text{if $i=2j$,}
		\qquad\qquad\text{and}\qquad\qquad
		I(T^c_{i}) =  \frac{1}{2}\binom{2j}{j} \quad\text{if $i=2j-1$.}
	\]
	We prove the claim for $n=2k-1$, the proof is along the same lines if $n=2k$. Theorem~\ref{thm:singleton_cuts} 
	and $2^{2n+1}\sum_{i=0}^{n}2^{-2i}\binom{2i}{i} = (n+1)\binom{2n+2}{n+1}$ imply
	\begin{align*}
		\mathsf S_c 
			&= 2^{n-2}(h+1) - \sum_{i\in[n-2]}2^{n-2-i}I(T^c_i)\\
			&= 2^{n-2}(h+1) 
				- 2^{n-1}\left(
						 \sum_{j\in[k-2]}
							\left( 2^{-2j}I(T^c_{2j-1}) + 2^{-(2j+1)}I(T^c_{2j}) \right)
						 + 2^{-(n-1)}I(T^c_{n-2})
						 \right)\\
			&= 2^{n-2}(h+1) 
				- 2^{n-1}\left(
						 \sum_{j\in[k-2]}
							2^{-2j}\binom{2j}{j}
							+ 2^{-(2k-1)}\binom{2(k-1)}{k-1}
							+ 1
							- 1
						 \right)\\
			&= 2^{n-2}(h+3) 
				- 2^{n-1}\left(
						 	2^{-(2k-3)}(k-1)\binom{2(k-1)}{k-1}
						 	+ 2^{-(2k-1)}\binom{2(k-1)}{k-1}
						 \right)\\
			&= 2^{n-2}(h+3) 
				- \frac{2n-1}{2}\binom{n-1}{\frac{n-1}{2}}.
	\end{align*}

	Suppose that $(W,S)$ is of type $D_n$. Then $I(T^c_{n-2}) = I(T^c_{n-3})$
	as well as
	\[
		I(T^c_{i}) =  \binom{2j}{j} \quad\text{if $i=2j$,}
		\qquad\quad\text{and}\qquad\quad
		I(T^c_{i}) =  \frac{1}{2}\binom{2j}{j} \quad\text{if $i=2j-1$,}
	\]
	for $1\leq i\leq n-3$. Substitution of $I(T^c_i)$ in the formula for $\mathsf S_c$ of 
	Theorem~\ref{thm:singleton_cuts} and a computation similar to the previous case yields 
	the claim.
	
	Finally, we assume that $(W,S)$ is of type $E_n$ for $n\in\{6,7,8\}$. If $i\in[n-4]$ then 
	\[
		I(T^c_{i}) =  \binom{2j}{j} \quad\text{if $i=2j$,}
		\qquad\quad\text{and}\qquad\quad
		I(T^c_{i}) =  \frac{1}{2}\binom{2j}{j} \quad\text{if $i=2j-1$,}
	\]
	as well as $I(T^c_{n-3})=I(T^c_{n-4})$ and $I(T^c_{n-2})=(n-3)(n-4)$. 
	Theorem~\ref{thm:singleton_cuts} implies the claim.
\end{proof}

\begin{remark}\hfill
\begin{compactenum}[i)]
	\item Notice that Proposition~\ref{prop:S_c_for_Cox_max} yields Equation~\eqref{formula:fishburn} of Galambos and Reiner 
		if $(W,S)$ is of type~$A$ as~$W \cong \Sigma_{n+1}$, $c_{alt}\in\mathsf {Cox_{max}}$ and $h=n+1=m$.
	\item For the Coxeter groups of type $I_2(m)$ we obtain $\mathsf S_c=m+1$ for $c\in\mathsf {Cox_{max}}$.
	\item Substitution of the relevant Coxeter numbers yields~$\mathsf S_c$ for exceptional finite Coxeter groups and $c\in\mathsf{Cox_{max}}$:\\[-2mm]
		\begin{center}
			\begin{tabular}{c|c|c|c|c|c|c}
			  	$(W,S)$			& $H_3$ & $H_4$	& $F_4$ & $E_6$ & $E_7$ & $E_8$ \\
				\hline
				$\mathsf S_c$	& $21$	& $120$	& $48$	& $182$ & $546$ & $1840$
			\end{tabular}.
		\end{center}
\end{compactenum}
\end{remark}

\subsection{Minimum total number of sources and sinks}

Since $\Gamma$ is a tree with at most one branching point, the minimum number of sources and sinks of~$\Gamma_c$ is two or three: 
if~$\Gamma$ is a path then $|\mathsf {Cox_{min}}|=2$ while $|\mathsf {Cox_{min}}|=6$ if~$\Gamma$ has a branching point. In the 
latter case, we partition $\mathsf {Cox_{min}}$ into $\mathsf {Cox_a}$, $\mathsf {Cox_b}$ and $\mathsf {Cox_c}$ 
where each set consists of the Coxeter element shown in Figure~\ref{fig:Cmin_orientations_for_Dn_En} together with $\mathsf{rev}(c)$. 

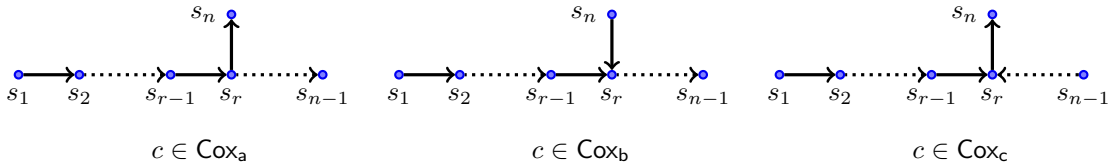
\begin{figure}[!hb]
\begin{center}
	\begin{tikzpicture}[scale=1,
	 pointille/.style={dashed},
	 axe/.style={color=black, very thick},
	 sommet/.style={inner sep=1pt,circle,draw=blue!95!black,fill=blue!50,thick,anchor=base}]

	\coordinate (0) at (0,0);
	\matrix[matrix of math nodes, column sep=0.25cm,row sep=0.2cm]
	{
	\node[sommet,label=below:s_1] (s1) at (0) {};
	\node[sommet,label=below:s_2] (s2) at (0.8,0) {} edge[<-,very thick] (s1);
	\node[sommet,label=below:{s_{r-1}}] (s3) at (2.0,0) {} edge[<-,dotted,very thick] (s2);
	\node[sommet,label=below:{s_{r}}] (s4) at (2.8,0) {} edge[<-,very thick] (s3);
	\node[sommet,label=below:s_{n-1}] (s5) at (4.0,0) {} edge[<-,dotted, very thick] (s4); 
	\node[sommet,label=left:s_n] (s7) at (2.8,0.8) {} edge[<-,very thick] (s4);&
	\node[sommet,label=below:s_1] (s1) at (0) {};
	\node[sommet,label=below:s_2] (s2) at (0.8,0) {} edge[<-,very thick] (s1);
	\node[sommet,label=below:s_{r-1}] (s3) at (2.0,0) {} edge[<-,dotted,very thick] (s2);
	\node[sommet,label=below:{s_{r}}] (s4) at (2.8,0) {} edge[<-,very thick] (s3);
	\node[sommet,label=below:s_{n-1}] (s5) at (4.0,0) {} edge[<-,dotted, very thick] (s4); 
	\node[sommet,label=left:s_n] (s7) at (2.8,0.8) {} edge[->,very thick] (s4);&
	\node[sommet,label=below:s_1] (s1) at (0) {};
	\node[sommet,label=below:s_2] (s2) at (0.8,0) {} edge[<-,very thick] (s1);
	\node[sommet,label=below:s_{r-1}] (s3) at (2.0,0) {} edge[<-,dotted,very thick] (s2);
	\node[sommet,label=below:{s_{r}}] (s4) at (2.8,0) {} edge[<-,very thick] (s3);
	\node[sommet,label=below:s_{n-1}] (s5) at (4.0,0) {} edge[->,dotted, very thick] (s4); 
	\node[sommet,label=left:s_n] (s7) at (2.8,0.8) {} edge[<-,very thick] (s4); \\
	\node[label=east:{c \in \mathsf{Cox_a}}] at (1.5,0) {}; & 
	\node[label=east:{c \in \mathsf{Cox_b}}] at (1.5,0) {};& 
	\node[label=east:{c \in \mathsf{Cox_c}}] at (1.5,0) {}; \\
	};
\end{tikzpicture}
\end{center}
\caption{Oriented Coxeter graphs~$\Gamma_c$ with branching point and minimum total number of sources and sinks ($r=n-2$ in type~$D$ and $r=n-3$ in type $E$).}
\label{fig:Cmin_orientations_for_Dn_En}
\end{figure}

The characterization of isometric~$\Asso_c$ in~\cite{bergeron_isometry_2009} implies that if $(W,S)$ is of type~$D$ or~$E$ then 
there are three distinct isometry classes of associahedra~$\Asso_c$ that correspond to the sets $\mathsf{Cox_a}$, $\mathsf{Cox_b}$ 
and $\mathsf{Cox_c}$ unless
\begin{compactitem}
	\item $(W,S)$ is of type~$D_4$, where $\mathsf{Cox_{min}}$ provides one isometry class;
	\item $(W,S)$ is of type~$D_n$ with $n\geq 5$, where $\mathsf{Cox_a}$ and $\mathsf{Cox_b} \cup \mathsf{Cox_c}$ provide two 
		isometry classes;
	\item $(W,S)$ is of type~$E_6$, where the $\mathsf{Cox_a} \cup \mathsf{Cox_b}$ and $\mathsf{Cox_c}$ provide two isometry classes.
\end{compactitem}
The next proposition provides explicit formulae for~$\mathsf S_\c$ and the various situations of $\c\in\mathsf{Cox_{min}}$.

\begin{proposition}\label{prop:S_c_for_Cox_min}
	Let $(W,S)$ be an irreducible Coxeter system of rank~$n>1$ and $c\in \mathsf {Cox_{min}}$.
	\begin{compactenum}[i)]
		\item Suppose that $(W,S)$ is path-like then
			\[
				\mathsf S_c	= 2^{n-2}(h-n+3).
			\]
		\item Suppose $(W,S)$ is of type $D_n$ and $n\geq 4$ then
		\[
			\mathsf S_c 
				= \begin{cases}
					2^{n-2}\big(h-n+\frac{7}{2}\big),		& \text{$c\in\mathsf {Cox_a}$,}\\[2mm]
					2^{n-2}\big(h-n+4\big)-2,				& \text{$c\in\mathsf {Cox_b}\cup \mathsf {Cox_c}$.}
				  \end{cases}
		\]
		\item Suppose $(W,S)$ is of type~$E_6$, $E_7$ or $E_8$. Then
		\[
			\mathsf S_c
				= \begin{cases}
					2^{n-2}(h-n+4) - 2^{n-4},
							& \text{$c\in\mathsf {Cox_a}$,}\\[2mm]
					2^{n-2}(h-n+4) - 4,
							& \text{$c\in\mathsf {Cox_b}$,}\\[2mm]
					2^{n-2}(h-n+4) + 2^{n-2} - 2n,
							& \text{$c\in\mathsf {Cox_c}$.}
				  \end{cases}
		\]
	\end{compactenum}
\end{proposition}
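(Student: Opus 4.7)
The plan is to apply Theorem~\ref{thm:singleton_cuts}, which reduces the proof to computing the values $I(T^c_i)$ at the tiles $T^c_i \in \mathsf{tile}(\kappa_c)$ of the primary cut path for each $c \in \mathsf{Cox_{min}}$, and then evaluating
\[
\mathsf S_c = 2^{n-2}(h+1) - \sum_{i \in [n-2]} 2^{n-2-i} I(T^c_i).
\]

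First, I would handle the path-like case. Since $c\in\mathsf{Cox_{min}}$ orients $\Gamma$ monotonically along $s_1,\dots,s_n$, the primary cut path $\kappa_c$ is a direct analogue of the Loday cut path studied in Example~\ref{expl:I(T_i)}. Every tile $T^c_i$ is bounded by two copies of $s_i$ and two copies of $s_{i+1}$, and an initial segment of a cut path up to $T^c_i$ is specified by $i$ independent binary choices at the successive levels below $T^c_i$, giving $I(T^c_i) = 2^i$ for all $i \in [n-2]$. The resulting sum telescopes to $\sum_{i=1}^{n-2} 2^{n-2-i} \cdot 2^i = (n-2)\,2^{n-2}$ and yields $\mathsf S_c = 2^{n-2}(h-n+3)$.

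For types $D_n$ and $E_n$, the Coxeter graph has a unique degree-$3$ vertex $s_r$, and the argument above still gives $I(T^c_i) = 2^i$ for every tile whose boundary avoids $s_n$. By Theorem~\ref{thm:cut_path_and_tiles}(iii), the branching vertex $s_r$ contributes two adjacent tiles $T^c_{r-1}, T^c_r$ of $\kappa_c$ that share two edges of $\TwoCov$ and involve $s_n$; their $I$-values depend on the orientation class $\mathsf{Cox_a}, \mathsf{Cox_b}$, or $\mathsf{Cox_c}$ of Figure~\ref{fig:Cmin_orientations_for_Dn_En}. In $\mathsf{Cox_a}$ the branch at $s_n$ is aligned with the monotone main path so the new $I$-values differ from the bulk $2^{r-1}, 2^r$ by factors that contribute the $+2^{n-3}$ correction in type $D_n$ and the $-2^{n-4}$ correction in type $E_n$; in $\mathsf{Cox_b}$ and $\mathsf{Cox_c}$ the branch direction obstructs certain initial segments so that one of the two branching $I$-values collapses to a small constant, producing the $-2$ corrections in type $D_n$ and the $-4$ and $+2^{n-2}-2n$ corrections in type $E_n$. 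Substituting $h = 2(n-1)$ in type $D_n$ and $h\in\{12,18,30\}$ in types $E_6, E_7, E_8$, and summing exactly as in the path-like case, produces the stated formulae.

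The main obstacle will be the local enumeration of initial segments around the branching vertex: one has to trace carefully, in each of the three orientation classes, which cut paths are admissible given the additional edge $\{s_r, s_n\}$, and correctly identify the interior side $\mathsf{in}(T^c_i)$ from Definition~\ref{def:init_final_segment} in each subcase. Small-$n$ degeneracies must also be verified, in particular the $D_4$ case, where $\mathsf{Cox_a}$ and $\mathsf{Cox_b}\cup \mathsf{Cox_c}$ produce the same numerical value $2^{n-2}(h-n+7/2) = 2^{n-2}(h-n+4) - 2$. Once these local counts are pinned down, the global formulae follow by the same telescoping evaluation used in the path-like case and in Proposition~\ref{prop:S_c_for_Cox_max}; no hypergeometric identities are needed since the branching corrections are simple powers of $2$.
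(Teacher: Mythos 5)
Your overall strategy is exactly the paper's: apply Theorem~\ref{thm:singleton_cuts} and compute the values $I(T^c_i)$ along the primary cut path, and your path-like case ($I(T^c_i)=2^i$ for all $i$, so $\mathsf S_c=2^{n-2}(h+1)-(n-2)2^{n-2}=2^{n-2}(h-n+3)$) is complete and agrees with the paper. The gap is in types $D$ and $E$: you never actually determine the $I$-values at the tiles influenced by the branch vertex. You assert that they ``differ from the bulk by factors that contribute'' precisely the corrections $+2^{n-3}$, $-2$, $-2^{n-4}$, $-4$, $+2^{n-2}-2n$ appearing in the stated formulae --- that is reasoning backwards from the answer, not a proof. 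The paper's proof consists exactly of these local counts: in type $D_n$ one has $I(T^c_i)=2^i$ for $i\leq n-4$ and then, at $i=n-3,n-2$, the pairs $(2^{n-3},2^{n-3})$, $(1,2^{n-2})$, $(2^{n-3},2)$ for $c\in\mathsf{Cox_a},\mathsf{Cox_b},\mathsf{Cox_c}$ respectively; in type $E_n$ one has $I(T^c_i)=2^i$ for $i\leq n-5$ and then, at $i=n-4,n-3,n-2$, the triples $(2^{n-4},2^{n-4},3\cdot 2^{n-4})$, $(1,2^{n-3},2^{n-2})$, $(2^{n-4},2,2n-4)$. These numbers are the substance of the proposition, and your proposal leaves them undetermined.

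Moreover, one structural claim in your outline is false and would derail the type~$E$ computation even if you carried out the local enumeration as planned: you assert that only the two tiles adjacent to the branching vertex deviate and that $I(T^c_i)=2^i$ ``for every tile whose boundary avoids $s_n$''. This is fine in type $D$ (where the two branch tiles are the topmost ones), but in type $E$ the topmost tile $T^c_{n-2}$ corresponds to the degree-$2$ vertex $s_{n-2}$ lying above the branch; its boundary avoids $s_n$, yet $I(T^c_{n-2})$ equals $3\cdot 2^{n-4}$, $2^{n-2}$ or $2n-4$ according to the class of $c$, not $2^{n-2}$ in general. The reason is that $I(T^c_i)$ counts initial segments of cut paths, i.e.\ everything \emph{below} the tile, so every tile above the branching point is affected by the branch beneath it. Once the correct case-by-case values above are established, the substitution into Theorem~\ref{thm:singleton_cuts} is the routine telescoping you describe, and you are right that no hypergeometric identities are needed here.
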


\begin{proof}
	Again, all claims follow from Theorem~\ref{thm:singleton_cuts} together with the following
	formulae for~$I(T^c_i)$.
	
	First, if $(W,S)$ is path-like then $I(T^\c_i)=2^i$ for all $i\in [n-2]$ and all $c\in\mathsf {C_{min}}$. 
	
	Second, if $(W,S)$ is of type $D_n$ and $n\geq 4$. Then $I(T^\c_i)=2^i$ for all $i\leq [n-4]$ and 
	$\c\in \mathsf {Cox_{min}}$ as well as
	\[
	\resizebox{\textwidth}{!}{$
		I(T^{\c_{\mathsf a}}_{i}) 
		= \begin{cases}
			2^{n-3},	&	i=n-3,\\
			2^{n-3},	& 	i=n-2,
		  \end{cases}\quad		
		I(T^{\c_{\mathsf b}}_{i}) 
		= \begin{cases}
			1,			&	i=n-3,\\
			2^{n-2},	& 	i=n-2,
		  \end{cases}\quad\text{and}\quad
		I(T^{\c_{\mathsf c}}_{i}) 
		= \begin{cases}
			2^{n-3},	&	i=n-3,\\
			2,			& 	i=n-2
		  \end{cases}
	$}
	\]
	where $c_{\mathsf a}\in\mathsf {Cox_a}$, $c_{\mathsf b}\in\mathsf {Cox_b}$ and $c_{\mathsf c}\in\mathsf {Cox_c}$.
	
	Third, if $(W,S)$ is of type~$E_n\in\{E_6, E_7, E_8\}$. Then
	$I(T^{\c}_i)=2^i$ for $i\in[n-5]$ and all $\c\in \mathsf {Cox_{min}}$ as well as
	\[
	\resizebox{\textwidth}{!}{$
		I(T^{c_{\mathsf a}}_{i}) 
		= \begin{cases}
			2^{n-4},		&	i=n-4,\\
			2^{n-4},		& 	i=n-3,\\
			3\cdot 2^{n-4},	&	i=n-2
		  \end{cases}
		\quad
		I(T^{c_{\mathsf b}}_{i}) 
		= \begin{cases}
			1,				&	i=n-4,\\
			2^{n-3},		& 	i=n-3,\\
			2^{n-2},		&	i=n-2
		  \end{cases}
		\quad\text{and}\quad
		I(T^{c_{\mathsf c}}_{i}) 
		= \begin{cases}
			2^{n-4},		&	i=n-4,\\
			2,				& 	i=n-3,\\
			2n-4,			&	i=n-2,
		  \end{cases}
	$}
	\]
	where $c_{\mathsf a}\in\mathsf {Cox_a}$, $c_{\mathsf b}\in\mathsf {Cox_b}$ and $c_{\mathsf c}\in\mathsf {Cox_c}$.
\end{proof}

\begin{remark}\label{rem:minima}
\hfill
	\begin{compactenum}[i)]
		\item If $(W,S)$ is of type $I_2(m)$ then $\mathsf{Cox_{min}}=\mathsf{Cox_{max}}$. 
			Notice that Proposition~\ref{prop:S_c_for_Cox_max} and \ref{prop:S_c_for_Cox_min} yield $\mathsf S_c=m+1$ 
			in both cases for all $m$.
		\item Let $(W,S)$ be of type $D_n$. If $n=4$ then both formulae of Proposition~\ref{prop:S_c_for_Cox_min} yield
			$\mathsf S_\c=22$. Moreover, if $n\geq 5$ and $\c\in\mathsf{Cox_{min}}$ then $\mathsf S_\c$ is minimal if and only if
			$\c\in\mathsf{Cox_a}$. 
		\item Let $(W,S)$ be of type $E_n$. \\
			If~$n=6$ and $c\in \mathsf {Cox_a}\cup \mathsf{Cox_b}$ then $\mathsf S_c=156$ while $\mathsf S_c=164$ 
			for $c\in \mathsf {Cox_c}$. \\
			If $n=7$ then $\mathsf S_c=472$ for $c\in \mathsf {Cox_a}$, $\mathsf S_c=476$ for $c\in \mathsf{Cox_b}$ 
			and $\mathsf S_c=498$ for $c\in \mathsf{Cox_c}$. \\
			If $n=8$ then $\mathsf S_c=1648$ for $c\in \mathsf {Cox_a}$, $\mathsf S_c=1660$ for $c\in \mathsf{Cox_b}$ 
			and $\mathsf S_c=1904$ for $c\in \mathsf{Cox_c}$. 
		\item The minimum numbers of $\mathsf S_c$ for all exceptional finite Coxeter groups and $c\in\mathsf{Cox_{min}}$ are:\\[-2mm]
			\begin{center}
				\scalebox{1}{
				\begin{tabular}{c|c|c|c|c|c|c}
				  	$(W,S)$			& $H_3$ & $H_4$	& $F_4$ & $E_6$ & $E_7$ & $E_8$ \\
					\hline
					$\mathsf S_c$	& $20$	& $116$	& $44$	& $156$ & $472$ & $1648$
				\end{tabular}
				}
			\end{center}
	\end{compactenum}
\end{remark}

\section{Lower and Upper bounds}\label{sec:bounds}

In this section, we use Theorem~~\ref{thm:singleton_cuts} and Corollary~\ref{cor:number_crossing_cut_paths} to prove upper 
and lower bounds for the cardinality~$\mathsf S_c$ of a Cambrian acyclic domain~$\mathsf{Acyc}_\c$ by identification of 
minimizers and maximizers for~$\sum_{i=1}^{n-2}2^{n-2-i}I(T^c_i)$. As done in the proof of Proposition~\ref{prop:drawing}, we fix a Coxeter triple $(W,S,\c)$ 
and label the generators of~$S$ along a longest path of~$\Gamma$ such that $s_1,\ldots, s_p$ with $p\in\{n-1,n\}$ are successive 
vertices and in types~$D$ and $E$ we have $p=n-1$ and the vertex $s_n$ is connected to~$s_r$ where $r=n-2$ (type~$D_n$) or $r=n-3$ (type $E$).

\subsection{Cut functions}

\begin{definition}[Cut function]
	\hfill\break
	A function $f:S \rightarrow \Z$ is a \Dfn{cut function} of $(W,S)$ if $f(s_1)$ is odd and $|f(s)-f(t)|=1$ for all non-commuting pairs $s,t\in S$. 
	We write $(f(s_1),\dots,f(s_n))$ for the cut function $f$ and denote the set of all cut functions of~$(W,S)$ by $\mathsf{CF}(W,S)$.
	A generator $s\in S$ is an \Dfn{extremum} of the cut function~$f$ if either $f(s)<f(t)$ 	or $f(s)>f(t)$ for all $t\in S$ that do not commute with~$s$.
\end{definition}

Since $|f(s)-f(t)|=1$ for all non-commuting pairs $s,t\in S$, any cut function~$f$ determines a unique Coxeter 
element~$c_f\in\mathsf{Cox}(W,S)$ such that $f(s)<f(t)$ if and only if $(s, t)$ is a directed edge of~$\Gamma_{c_f}$ and every 
Coxeter element determines a cut function up to an even constant. Moreover, sources and sinks of~$\Gamma_{c_f}$ 
correspond to extrema of~$f$ and, among the $h$ cut paths of $\Phi^{-1}\big(\operatorname{rev}(c_f)\big)$ from 
Corollary~\ref{cor:number_of_cut_paths}, the cut function~$f$ determines a unique cut path~$\kappa_f$ as follows. Let~$\sigma$ 
be the vertex of~$\TwoCov$ with $\mathsf{g}(\sigma)=s_2$ such that the $x$-coordinate~$x_v$ of $v\in pr_{\c^h}^{-1}(\sigma)$ 
satisfies $x_v\equiv f(s_1)\mod{2h}$ and let $(\rho,\sigma)$ and $(\sigma,\tau)$ be the two edges of $\TwoCov$ with 
$\mathsf{g}(\rho)=\mathsf{g}(\tau)=s_1$. Now $\kappa_f$ contains $(\rho,\sigma)$ if $f(\mathsf{g}(\rho))>f(\mathsf{g}(\sigma))$ 
and $(\sigma,\tau)$ if $f(\mathsf{g}(\sigma))<f(\mathsf{g}(\tau))$. We say that the cut path~$\kappa_f$ represents the cut 
function~$f$.

\begin{example}\label{expl:cut_functions}
	Consider $(W,S)$ of type $A_4$ and $\c =s_2s_1s_4s_3$. The cut functions $f$ and $g$ with $f(s_1,s_2,s_3,s_4)=(-1,0,1,2)$
	and $g(s_1,s_2,s_3,s_4)=(1,0,1,0)$ determine the Coxeter elements $c_f=s_1s_2s_3s_4$ and $c_g=s_2s_1s_4s_3$ indicated as 
	shaded subgraphs in the planar drawing of~$\TwoCov$ in Figure~\ref{fig:crossing}. As $f(s_1)=-1\equiv 9 \mod{10}$ and
	$f(s_1)<f(s_2)$ as well as $g(s_1)=1\equiv 1 \mod{10}$ and $g(s_1)>g(s_2)$, we obtain the cut paths $\kappa_f$ 
	and $\kappa_g$ that represent the cut functions~$f$ and~$g$ as indicated. The extrema of~$f$ are $s_1$ and $s_4$, while
	all generators in~$S$ are extrema of~$g$.
\end{example}

\begin{figure}[!htb]
\begin{center}
	\centerline{\begin{tikzpicture}[scale=0.6,
	decoration={snake,amplitude=.2mm,segment length=1mm},
	copies/.style={line width=15pt,color=blue,draw opacity=0.20,line cap=round,line join=round}]

  \node (left1) at (4.125,0.1875) {};
  \node (left2a) at (4.125,3.1875) {};
  \node (left2b) at (4.125,2.8125) {};
  
  \draw[copies] (6,0) -- (9,4.5);
  
  \draw[thick] (4.1,-0.5) -- (4.1,5);
  \draw[thick] (4.1,-0.5) -- (14.8,-0.5);
  \draw[thick] (4.1,5) -- (14.8,5);
  \draw[thick] (14.8,-0.5) -- (14.8,5);
  

  \node (s2-1) at (5,1.5) {\small$s_2$} edge[<-,thick,densely dotted] (left1) edge[<-,thick,densely dotted] (left2b);
  \node (s4-1) at (5,4.5) {\small$s_4$} edge[<-,thick,densely dotted] (left2a);
  \node (s1-1) at (6,0) {\small$s_1$} edge[<-,thick,decorate] (s2-1);
  \node (s3-1) at (6,3) {\small$s_3$} edge[<-,thick] (s2-1)  edge[<-,thick] (s4-1);
    
  \node (s2-2) at (7,1.5) {\small$s_2$} edge[<-,thick,decorate] (s3-1)  edge[<-,thick] node[pos=0.5,circle,fill=blue!40,draw=white,inner sep=1pt,yshift=-0.17cm,xshift=0.17cm] {\small $a$} (s1-1);
  \node (s4-2) at (7,4.5) {\small$s_4$} edge[<-,thick] node[pos=0.5,circle,fill=blue!40,draw=white,inner sep=1pt,yshift=0.15cm,xshift=-0.15cm] {\small $b$} (s3-1);
  \node (s1-2) at (8,0) {\small$s_1$} edge[<-,thick] (s2-2);
  \node (s3-2) at (8,3) {\small$s_3$} edge[<-,thick] (s2-2)  edge[<-,thick,decorate] (s4-2);
  
  \node (s2-3) at (9,1.5) {\small$s_2$} edge[<-,thick] (s3-2)  edge[<-,thick] (s1-2);
  \node (s4-3) at (9,4.5) {\small$s_4$} edge[<-,thick] (s3-2);


  \node (s1-3) at (10,0) {\small$s_1$} edge[<-,thick] (s2-3);
  \node (s3-3) at (10,3) {\small$s_3$} edge[<-,thick] (s4-3)  edge[<-,thick] (s2-3);
  \node (s2-4) at (11,1.5) {\small$s_2$} edge[<-,thick] (s1-3)  edge[<-,thick] (s3-3);
  \node (s4-4) at (11,4.5) {\small$s_4$} edge[<-,thick,decorate] (s3-3);
  
  \node (s1-4) at (12,0) {\small$s_1$} edge[<-,thick] (s2-4);
  \node (s3-4) at (12,3) {\small$s_3$} edge[<-,thick,decorate] (s2-4)  edge[<-,thick] (s4-4);
  \node (s2-5) at (13,1.5) {\small$s_2$} edge[<-,thick,decorate] (s1-4)  edge[<-,thick] (s3-4);
  \node (s4-5) at (13,4.5) {\small$s_4$} edge[<-,thick] (s3-4);
  
  \node (s1-5) at (14,0) {\small$s_1$} edge[<-,thick] (s2-5);
  \node (s3-5) at (14,3) {\small$s_3$} edge[<-,thick] (s2-5)  edge[<-,thick] (s4-5);

  \node (s2-6b) at (14.75,1.125) {} edge[thick,densely dotted] (s1-5);
  \node (s2-6a) at (14.75,1.875) {} edge[thick,densely dotted] (s3-5);
  \node (s4-6) at (14.75,4.125) {} edge[thick,densely dotted] (s3-5);

  \node[red] at (8,5.5) {$\color{red}{\kappa_{f}}$};
  \draw[red,thick,->,densely dashed] (5,-0.5) -- (5,0.15) -- (8,4.65) -> (8,5);

  \node[red] at (10,5.5) {$\color{red}{\kappa^*_{f}}$};
  \draw[red,thick,->,densely dashed] (13,-0.5) -- (13,0.15) -- (10,4.65) -> (10,5);
  
  	\node at (6,-1) {$0$};
	\node at (8,-1) {$2$};
	\node at (10,-1) {$4$};
	\node at (12,-1) {$6$};
	\node at (14,-1) {$8$};
	
	\node at (10,-2) {$f(s_1,s_2,s_3,s_4)=(-1,0,1,2)$};
  
\end{tikzpicture}$\qquad$\begin{tikzpicture}[scale=0.6,decoration={snake,amplitude=.2mm,segment length=1mm},
copies/.style={line width=15pt,color=blue,draw opacity=0.20,line cap=round,line join=round}]

  \draw[copies] (8,0) -- (7,1.5) -- (8,3) -- (7,4.5);

  \node (left1) at (4.125,0.1875) {};
  \node (left2a) at (4.125,3.1875) {};
  \node (left2b) at (4.125,2.8125) {};
  
  \draw[thick] (4.1,-0.5) -- (4.1,5);
  \draw[thick] (4.1,-0.5) -- (14.8,-0.5);
  \draw[thick] (4.1,5) -- (14.8,5);
  \draw[thick] (14.8,-0.5) -- (14.8,5);
  

  \node (s2-1) at (5,1.5) {\small$s_2$} edge[<-,thick,densely dotted] (left1) edge[<-,thick,densely dotted] (left2b);
  \node (s4-1) at (5,4.5) {\small$s_4$} edge[<-,thick,densely dotted] (left2a);
  \node (s1-1) at (6,0) {\small$s_1$} edge[<-,thick] node[pos=0.5,circle,fill=blue!40,draw=white,inner sep=1pt,yshift=-0.15cm,xshift=-0.15cm] {\small $c$} (s2-1);
  \node (s3-1) at (6,3) {\small$s_3$} edge[<-,thick] (s2-1)  edge[<-,thick] (s4-1);
  
  \node (s2-2) at (7,1.5) {\small$s_2$} edge[<-,thick,decorate] (s3-1)  edge[<-,thick,decorate] (s1-1);
  \node (s4-2) at (7,4.5) {\small$s_4$} edge[<-,thick,decorate] (s3-1);
  \node (s1-2) at (8,0) {\small$s_1$} edge[<-,thick] (s2-2);
  \node (s3-2) at (8,3) {\small$s_3$} edge[<-,thick] (s2-2)  edge[<-,thick] node[pos=0.5,circle,fill=blue!40,draw=white,inner sep=1pt,yshift=0.17cm,xshift=0.17cm] {\small $d$} (s4-2);
  
  \node (s2-3) at (9,1.5) {\small$s_2$} edge[<-,thick] (s3-2)  edge[<-,thick] (s1-2);
  \node (s4-3) at (9,4.5) {\small$s_4$} edge[<-,thick] (s3-2);


  \node (s1-3) at (10,0) {\small$s_1$} edge[<-,thick] (s2-3);
  \node (s3-3) at (10,3) {\small$s_3$} edge[<-,thick] (s4-3)  edge[<-,thick] (s2-3);
  \node (s2-4) at (11,1.5) {\small$s_2$} edge[<-,thick] (s1-3)  edge[<-,thick] (s3-3);
  \node (s4-4) at (11,4.5) {\small$s_4$} edge[<-,thick] (s3-3);

  \node (s1-4) at (12,0) {\small$s_1$} edge[<-,thick,decorate] (s2-4);
  \node (s3-4) at (12,3) {\small$s_3$} edge[<-,thick,decorate] (s2-4)  edge[<-,thick,decorate] (s4-4);
  \node (s2-5) at (13,1.5) {\small$s_2$} edge[<-,thick] (s1-4)  edge[<-,thick] (s3-4);
  \node (s4-5) at (13,4.5) {\small$s_4$} edge[<-,thick] (s3-4);
  
  \node (s1-5) at (14,0) {\small$s_1$} edge[<-,thick] (s2-5);
  \node (s3-5) at (14,3) {\small$s_3$} edge[<-,thick] (s2-5)  edge[<-,thick] (s4-5);

  \node (s2-6b) at (14.75,1.125) {} edge[thick,densely dotted] (s1-5);
  \node (s2-6a) at (14.75,1.875) {} edge[thick,densely dotted] (s3-5);
  \node (s4-6) at (14.75,4.125) {} edge[thick,densely dotted] (s3-5);

  \node at (6.5,5.5) {$\color{red}{\kappa_{g}}$};
  \draw[red,thick,->,densely dashed] (6.5,-0.5) -> (6.5,5);

  \node at (11.5,5.5) {$\color{red}{\kappa^*_{g}}$};
  \draw[red,thick,->,densely dashed] (11.5,-0.5) -> (11.5,5);
  
    	\node at (6,-1) {$0$};
	\node at (8,-1) {$2$};
	\node at (10,-1) {$4$};
	\node at (12,-1) {$6$};
	\node at (14,-1) {$8$};
	
	\node at (10,-2) {$g(s_1,s_2,s_3,s_4)=(1,0,1,0)$};
  
\end{tikzpicture}}
\end{center}
	\caption{\label{fig:crossing} The cut functions $f$ and $g$ and their associated cut paths $\kappa_{f}$ and $\kappa_{g}$ and Coxeter elements $c_f$ and $c_g$ in type $A_4$.}
\end{figure}
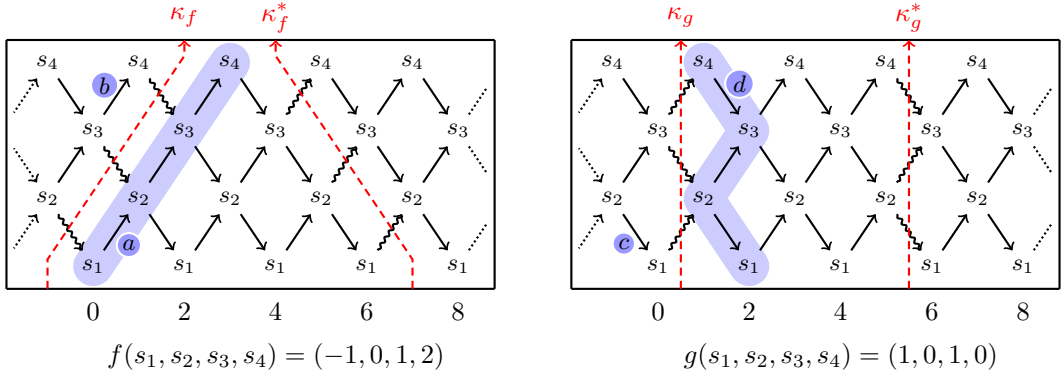

\begin{definition}[equivalence and crossing of cut functions]\hfill\break
	Let $f, g:S\rightarrow \Z$ be cut functions of the finite and irreducible Coxeter system~$(W,S)$.
	\begin{compactenum}[i)]
		\item $f$ and $g$ are \Dfn{equivalent}, $f\simeq g$, if and only if $f(s)\equiv g(s)\mod{2h}$ for all $s\in S$.
		\item $f$ and $g$ \Dfn{cross} if and only if there exist $s,t\in S$ and $\widetilde f\simeq f$ such that
			$\widetilde f(s) < g(s)$ and $\widetilde f(t) > g(t)$.
	\end{compactenum}
\end{definition}

Before showing that the notion of crossing cut functions~$f$ and~$g$ coincides with the crossing of cut paths~$\kappa_f$ 
and $\kappa_g$ that represent $f$ and $g$ in the next lemma, we illustrate the definition with an example.

\begin{example}[Example~\ref{expl:cut_functions} continued]
	\hfill\break
	Consider $f'(s_1,s_2,s_3,s_4)=(19,20,21,22)$ and $f''(s_1,s_2,s_3,s_4)=(0,1,2,3)$. Then $f\simeq f'$ and $f\not \simeq f''$.
	Moreover, $f'$ crosses $g$ as $f'\simeq f$ and $f(s_1)<g(s_1)$ and $f(s_4)>g(s_4)$. 
	Further, notice that the edges $a$ and $b$ required by Definition~\ref{def:crossing} show 
	that $\kappa_{g}$ crosses $\kappa_{f}$ while the edges~$c$ and $d$ show that $\kappa_f$ crosses $\kappa_g$.
\end{example}

\begin{lemma}\label{lem:equiv_of_crossing}
	Let~$(W,S,\c)$ be a Coxeter triple with associated $2$-cover~$\TwoCov$ as well as cut paths $\kappa_f$ and $\kappa_g$  
	representing cut functions $f$ and $g$. 
	The cut paths $\kappa_f$ and $\kappa_g$ cross if and only if the cut functions~$f$ and~$g$ cross.
\end{lemma}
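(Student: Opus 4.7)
The plan is to interpret cut paths as projections of planar staircases in the universal cover of $\TwoCov$ and to translate Definition~\ref{def:crossing} through this covering map. The embedding of $\TwoCov$ on the cylinder $S^1 \times \R$ from Lemma~\ref{lem:drawing_twocov_and_iso} lifts via $pr_{\c^h}$ to the planar drawing of the bi-infinite graph $\mathcal G_{\c^\Z}$ in $\R^2$, extending Proposition~\ref{prop:drawing} as in Remark~\ref{rem:bi-infinite_graph}. Any cut path $\kappa \in \mathsf{CP}(\TwoCov)$ lifts to a bi-infinite, third-coordinate-monotone ``staircase'' $\widetilde \kappa$ in this plane, and distinct lifts differ by translation by multiples of $2h$ in the $x$-direction (one full revolution of the cylinder). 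For the cut path $\kappa_f$ representing a cut function $f$, one checks from the construction of $\kappa_f$ after the definition that the $x$-coordinate of $\widetilde \kappa_f$ on the horizontal row of a generator $s$ equals $f(s)$. Thus lifts of $\kappa_f$ in $\mathcal G_{\c^\Z}$ are in natural bijection with the equivalence class of $f$ under $\simeq$.

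With this dictionary in place, I would establish both implications. If the cut functions $f$ and $g$ cross, then by definition some $\widetilde f \simeq f$ satisfies $\widetilde f(s) < g(s)$ for some $s$ and $\widetilde f(t) > g(t)$ for some $t$. The corresponding lift of $\kappa_f$ is planar-left of $\widetilde \kappa_g$ on row $s$ and planar-right on row $t$; monotonicity of the staircases forces them to cross in the plane. Projecting back to $\TwoCov$, the crossing implies that $\kappa_f$ has edges projecting into both connected components of $\TwoCov \setminus (\kappa_g \cup \kappa_g^*)$ and shares a tile with $\kappa_g$, giving the crossing condition of Definition~\ref{def:crossing}. Conversely, assuming $\kappa_f$ crosses $\kappa_g$, the shared tile determines a compatible lift $\widetilde \kappa_f$ whose values $\widetilde f$ on the two extremal generators of the shared tile, together with the values at rows where edges of $\kappa_f$ lie on opposite sides of $\kappa_g \cup \kappa_g^*$, supply $s$ and $t$ witnessing $\widetilde f \simeq f$ crossing $g$.

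The main obstacle will be the tile-sharing half of the argument: a planar interleaving of two staircases implies they cross in the plane, but one must verify that this planar crossing actually yields a shared tile after projection, rather than a crossing concentrated at an isolated vertex. Here the parity constraint in the definition of cut function is essential, since $f(s)$ and $g(s)$ share parity for each $s$ (determined by the bipartition of the tree $\Gamma$ and the condition that $f(s_1), g(s_1)$ are both odd). This parity alignment forces the two staircases to swap relative $x$-order only between rows where they border a common tile, and Theorem~\ref{thm:cut_path_and_tiles}(ii)--(iii) then identifies that tile explicitly. Once the tile-sharing is pinned down, the remaining bookkeeping between the edges of the projected cut path and the induced subgraphs $\mathcal C_{\woc}$ and $\mathcal C_{\psi(\woc)}$ is a direct consequence of the $2h$-periodicity of the lifts of $\kappa_g$.
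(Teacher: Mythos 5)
Your proposal is correct and takes essentially the same route as the paper's proof: your universal-cover dictionary (the lift of $\kappa_f$ meets the row of $s$ at $x$-coordinate $f(s)$, with the lifts corresponding to the $\simeq$-class of $f$) is just the paper's identification of $\kappa_f$ with $f$ via $x$-coordinates modulo $2h$, and your parity argument forcing the two staircases to swap relative order only at a common tile is precisely the paper's intermediate-value step along the tree $\Gamma$ producing a generator $u$ with $\widetilde f(u)=\widetilde g(u)$. Both arguments then extract the shared tile and the edges on the two sides of $\kappa_g\cup\kappa_g^*$ in the same way, so there is no substantive methodological difference.
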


\begin{proof}
	Assume that the cut path~$\kappa_f$ crosses~$\kappa_g$ on~$\TwoCov$. Since $\kappa_f$ and $\kappa_g$ are crossing, they 
	have a common tile~$T$ and there are edges $e_1\in\kappa_g$ in the initial side of $\kappa_f$ and $e_2\in\kappa_g$ in 
	the final side of~$\kappa_f$. These edges are also in $\TwoCov\setminus \kappa_f$. The common tile~$T$ allows us to get 
	specific representatives $\widetilde f$ and $\widetilde g$ for $f$ and $g$ by going down to the first tiles of $\kappa_f$ 
	and $\kappa_g$. Indeed, consider the (unoriented) edge $\{\rho,\sigma\}\in\kappa_f$ such that $\mathsf g(\rho)=s_2$ and 
	$\mathsf g(\sigma)=s_1$ and use the $x$-coordinate $x_\rho$ of $\rho$ as the value 
	$\widetilde f(s_1)=\widetilde f(\mathsf{g}(\sigma)):= x_\rho \simeq f(s_1) \mod{2h}$. This determines $\widetilde f\simeq f$. 
	Proceed similarly to obtain~$\widetilde g$. Further, since we have edges $e_1$ and $e_2$ on distinct connected components 
	of $\TwoCov\setminus \big(\kappa_f \cup \kappa_f^*\big)$, there exist $s,t\in S$ such that
	\[
		\widetilde f(s) < \widetilde g(s) 
		\quad\text{and}\quad
		\widetilde f(t) > \widetilde g(t)
	\qquad\qquad\text{or}\qquad\qquad
		\widetilde f(s) > \widetilde g(s) 
		\quad\text{and}\quad
		\widetilde f(t) < \widetilde g(t)
	\]
	depending if the initial side of $\kappa_f$ is ``on the right'' or ``on the left'' of $\kappa_f$.

	Now suppose that $f$ and $g$ are crossing cut functions with equivalent cut functions $\widetilde f \simeq f$ 
	and $\widetilde g \simeq g$ such that $\widetilde f(s) < \widetilde g(s)$ and $\widetilde f(t) > \widetilde g(t)$
	for some $s,t\in S$. Since the Coxeter graph $\Gamma$ is a tree and since a cut function~$h$ satisfies $|h(a)-h(b))|=1$
	for non-commuting $a,b\in S$, 
	every integral value between $\widetilde f(s)$ and $\widetilde f(t)$ is in the image of $\widetilde f$. Because of the 
	latter inequalities for $\widetilde f(s)$ and $\widetilde g(s)$, this implies that there exists a generator $u\in S$ such that $\widetilde f(u)=\widetilde g(u)$.
	Following the procedure to obtain the cut path~$\kappa_h$ from a cut function~$h$, this implies that the representing 
	cut paths~$\kappa_f$ and $\kappa_g$ will have a common tile once we get to a tile with vertex label $u$. Further, the 
	inequalities guarantee that there will be one edge in the initial side of $\kappa_f$ and one edge in the final side 
	of $\kappa_f$ taken by~$\kappa_g$.
\end{proof}

\subsection{Upper and lower bounds for the cardinality of Cambrian acyclic domains}
\hfill\break
To obtain lower and upper bounds for $\mathsf S_c=|\mathsf{Acyc}_\c|$, the next lemma about the minimum and maximum number
of extrema of a cut function~$f$ is essential. Clearly, the maximum number of extrema is equal to~$n$ while the minimum number 
of extrema is equal to~$2$ if $(W,S)$ is path-like and equal to~$3$ if $(W,S)$ is of type~$D$ or~$E$. 

\begin{lemma}\label{lem:extremal}
	Let $f$ be a cut function of the finite irreducible Coxeter system~$(W,S)$.
	\begin{compactenum}[a)]
		\item The number of cut functions that cross~$f$ is minimum if and only if the number of extrema of~$f$ is maximum.
		\item If $(W,S)$ is path-like, then the number of cut functions that cross~$f$ is maximum if and only if the number 
			of extrema of~$f$ is minimum.\\
			If $(W,S)$ is of type $D$ or $E$, then the number of cut functions that cross~$f$ is maximum implies that the 
			number of extrema of~$f$ is minimum.
	\end{compactenum}
\end{lemma}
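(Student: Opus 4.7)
The plan is to transport the question about cut functions into a question about the quantity
$\Sigma(c) := \sum_{i=1}^{n-2} 2^{n-2-i} I(T_i^c)$ that appears in Corollary~\ref{cor:number_crossing_cut_paths}. By Lemma~\ref{lem:equiv_of_crossing}, the number of equivalence classes of cut functions crossing~$f$ equals the number of cut paths of~$\TwoCov$ crossing~$\kappa_f$, which is $\mathsf Q_{c_f} = \Sigma(c_f) - (2^{n-2}-1)$. Hence the lemma is equivalent to the following three statements about $\Sigma$ on $\mathsf{Cox}(W,S)$: (i) $\Sigma$ attains its minimum exactly at Coxeter elements with maximum total number of sources and sinks, which gives part~(a); (ii) in the path-like case $\Sigma$ attains its maximum exactly at Coxeter elements with minimum total number of extrema, which gives the path-like part of~(b); (iii) in types~$D$ and~$E$, any maximizer of~$\Sigma$ has the minimum number of extrema, which gives the remaining implication of~(b).

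First I would unpack $I(T_i^c)$ combinatorially. By Definition~\ref{def:init_final_segment} and the embedding of~$\TwoCov$ on the cylinder, $I(T_i^c)$ counts initial segments of cut paths up to~$T_i^c$ whose edges stay in $\TwoCov\setminus\mathsf{out}(T_i^c)$. These segments are in bijection with lattice-path-type sequences whose admissible step set at level~$k$ is governed by the recorded edge orientations of~$\kappa_c$ at the earlier tiles $T_1^c,\dots,T_{k-1}^c$. The two extreme computations already carried out in Section~\ref{sec:counting_in_type_a} and in Example~\ref{expl:I(T_i)} show that a monotone (Loday-like) orientation pattern gives the largest possible value $I(T_i^c)=2^i$, while a perfectly alternating (bipartite) pattern gives the smallest possible value, namely $\tfrac{1}{2}\binom{2j}{j}$ or $\binom{2j}{j}$ depending on the parity of~$i$.

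The core of the argument is a local flip. Given~$c$ and an edge~$e$ of~$\Gamma$, let $\phi_e(c)$ be the Coxeter element obtained by reversing the orientation of~$e$ in~$\Gamma_c$. Such a flip modifies $\kappa_c$ only at the tiles at or above~$e$, so $I(T_k^c)$ is altered only for those~$k$. I would then construct an explicit injection between the ballot sequences counted by $I(T_k^c)$ and those counted by $I(T_k^{\phi_e(c)})$ that shows: if the flip strictly increases the number of extrema of~$\Gamma_c$ then $\Sigma(\phi_e(c))<\Sigma(c)$, and, in the path-like setting, the converse also holds. Iterating such flips reaches any element of $\mathsf{Cox_{max}}$ (respectively $\mathsf{Cox_{min}}$) monotonically, yielding both directions of~(a) and the path-like statement of~(b).

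For types~$D$ and~$E$ the same monotonicity gives the one-sided implication of~(b): any local maximum of~$\Sigma$ admits no flip that decreases extrema, which forces the number of extrema to be minimum. The failure of the converse is already visible in Proposition~\ref{prop:S_c_for_Cox_min}, where the three sub-families $\mathsf{Cox_a}, \mathsf{Cox_b}, \mathsf{Cox_c}$ of $\mathsf{Cox_{min}}$ produce distinct values of~$\mathsf S_c$; a flip of the edge incident to the branching vertex can keep the extrema count constant while altering~$\Sigma$. The main obstacle in executing this plan is defining the injection precisely at tiles far above the flipped edge, since a local change of orientation propagates through the ballot conditions imposed on later tiles: one must verify that the weighted telescoping difference $\sum_k 2^{n-2-k}\bigl(I(T_k^c) - I(T_k^{\phi_e(c)})\bigr)$ has the claimed sign, which will require a careful induction on the distance to the flipped edge together with the hypergeometric identity used in Section~\ref{sec:counting_in_type_a}.
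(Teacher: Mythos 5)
Your reduction to $\mathsf Q_{c}=\sum_{i}2^{n-2-i}I(T_i^c)-(2^{n-2}-1)$ via Lemma~\ref{lem:equiv_of_crossing} and Corollary~\ref{cor:number_crossing_cut_paths} is fine, but the argument has a genuine gap: the flip-monotonicity claim that carries all the weight --- that reversing a single edge of $\Gamma_c$ which strictly increases the number of extrema strictly decreases $\sum_i 2^{n-2-i}I(T_i^c)$, with a converse in the path-like case --- is never established, and you defer exactly this step (``the main obstacle\dots''). This is not a routine verification: a flip low in the graph changes $I(T_k^c)$ for every tile above it, and the decaying weights $2^{n-2-i}$ and the growing values $I(T_k^c)$ pull in opposite directions, so the sign of the weighted telescoping difference is precisely where the difficulty of the lemma sits. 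The paper never proves any such local monotonicity. For part~(a) it works directly with cut functions: it applies the move ``lower all global maxima of $f$ by $2$'' (a simultaneous flip of all top edges, not a single edge flip) and shows the crossing count strictly drops via the explicit injection given by the reflection $\mathcal R_{\mu-1}$, which sidesteps the $I(T_i)$ entirely; for the path-like half of~(b) it only needs the easy observation that $I(T_i^c)=2^i$ for all $i$ exactly for the two monotone orientations, which maximize every summand simultaneously.

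Second, even granting your flip claim, the step for types $D$ and $E$ is a non sequitur: from ``extrema-increasing flips strictly decrease $\Sigma$'' one cannot conclude that a maximizer of $\Sigma$ has the minimum number of extrema. You would need the converse (``extrema-decreasing flips strictly increase $\Sigma$''), which you assert only in the path-like case --- and its validity in types $D$ and $E$ is exactly what is delicate, since minimal-extrema elements need not be maximizers there (e.g.\ $\mathsf Q_{c_{\mathsf c}}<\mathsf Q_{c_{\mathsf a}}$ in type $E$, and in type $D_n$, $n\ge 5$, only $\mathsf{Cox_a}$ maximizes), so the relation between extrema and $\Sigma$ is not a simple order reversal. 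The paper replaces this with explicit base computations ($D_4$, $E_6$) and an induction on the rank using $\mathsf Q_{c_{n+1}}\le 2^{n-1}+2\mathsf Q_{c_n}-1$ together with an analysis of when equality holds; your plan contains no substitute for that quantitative step. Two smaller points: you also use without proof that from any non-bipartite orientation some single flip strictly increases the number of extrema (needed to iterate towards $\mathsf{Cox_{max}}$), and the locality claim that $I(T_k^c)$ is unchanged at tiles below the flipped edge needs the (true but unargued) fact that $I(T_k^c)$ depends only on the orientation of the edges below $T_k^c$.
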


\begin{proof}
	a) 
	For $m\in\Z$, the \Dfn{$m$-reflection} $\mathcal{R}_m:\mathsf{CF}(W,S)\rightarrow\mathsf{CF}(W,S)$ is the bijection 
	\[
		\mathcal{R}_m(f)(m+\ell) := f(m-\ell) 
		\qquad\text{for all $\ell \in \Z$.}
	\]
	If~$f\in\mathsf{CF}(W,S)$ has less than~$n$ extrema then set
	\[
		f': S \longrightarrow \Z
		\qquad\text{via}\qquad
		s \longmapsto
		\begin{cases}
			f(s), 	& \text{if $f(s)<\max_{t\in S} f(t)$},\\
			f(s)-2, & \text{if $f(s)=\max_{t\in S} f(t)$}.
		\end{cases}
	\]
	Clearly, every extremum of~$f$ is an extremum of~$f'$ and there is at least one $s\in S$ that is extremal for~$f'$ but 
	not for~$f$, see Figure~\ref{fig:cut_width}. Now define
	\[
		\mathcal F := \set{ g \in\mathsf{CF}(W,S)}{g\text{ crosses }f}
		\quad\text{and}\quad
		\mathcal F':= \set{ g \in\mathsf{CF}(W,S)}{g\text{ crosses }f'}.
	\] 
	We first prove $|\mathcal F'| < |\mathcal F|$ by showing that $|\mathcal F'\setminus \mathcal F| < |\mathcal F\setminus \mathcal F'|$.
	To see this, let $\mu := \max_{t\in S} f(t)$ and notice that 
	\[
		\mathcal R_{\mu-1}(g) \in \mathcal F\setminus \mathcal F'
		\qquad
		\text{for all $g\in \mathcal F'\setminus \mathcal F$,}
	\]
	so the reflection~$\mathcal R_{\mu-1}$ is an injection 
	$\mathcal F'\setminus \mathcal F \hookrightarrow \mathcal F\setminus \mathcal F'$ which is not surjective because 
	$f \not\in \mathcal F'\setminus \mathcal F$ and $\mathcal R_{\mu-1}(f) \in \mathcal F\setminus \mathcal F'$. 
	Thus $|\mathcal F'| < |\mathcal F|$ and iterating this process yields a cut function where every generator of~$S$ 
	is extremal. To complete the proof, we show that $|\mathcal F|=|\mathcal G|$ if~$f$ and~$g$ are cut functions where 
	every $s\in S$ is extremal. This follows from the observation that two cut functions where every $s\in S$ is extremal 
	differ only by translation and reflection.

	\begin{figure}[!th]         
	\begin{center}
		\begin{tikzpicture}[
	scale=1,
	sommet_red/.style={inner sep=2pt,circle,draw=red!95!black,fill=red!95,thick,anchor=base},
	sommet_blue/.style={inner sep=2pt,circle,draw=blue!95!black,fill=blue!95,thick,anchor=base},
	sommet_red_blue/.style={inner sep=2pt,circle,draw=blue!95!black,fill=red!95,thick,anchor=base}]
		
	\def\delai{0.1}
	\def\xfactor{1.5}
	\def\yfactor{0.66}
		
	\draw (0.5*\xfactor,0.5*\yfactor) --(7.1*\xfactor,0.5*\yfactor);
    	\draw (0.5*\xfactor,0.5*\yfactor) -- (0.5*\xfactor,7.1*\yfactor);
    	\foreach \x in {1,...,7}
     		\draw (\x*\xfactor,0.5cm*\yfactor+1pt) -- (\x*\xfactor,(0.5cm*\yfactor-3pt)
			node[anchor=north] {$s_\x$};
    	\foreach \y in {1,...,7}
     		\draw (0.5cm*\xfactor+1pt,\y*\yfactor) -- (0.5cm*\xfactor-3pt,\y*\yfactor) 
     			node[anchor=east] {\y}; 
			
	\draw (0.5*\xfactor,4*\yfactor) -- (7.3*\xfactor,4*\yfactor) node[label=right:{${\mu-1}$}] {};

	\draw[red,densely dotted,thick] (1*\xfactor,3*\yfactor) -- (2*\xfactor,2*\yfactor) -- (5*\xfactor,5*\yfactor) -- (6*\xfactor,4*\yfactor) -- (7*\xfactor,5*\yfactor);
	\draw[red,thick] (1*\xfactor,3*\yfactor-\delai) -- (2*\xfactor,2*\yfactor-\delai) -- (4*\xfactor,4*\yfactor-\delai) -- (5*\xfactor,3*\yfactor-\delai) -- (6*\xfactor,4*\yfactor-\delai) -- (7*\xfactor,3*\yfactor-\delai);
	\draw[red,densely dashdotdotted,thick] (1*\xfactor,1*\yfactor+\delai) -- (5*\xfactor,5*\yfactor+\delai) -- (7*\xfactor,3*\yfactor+\delai);
	\draw[blue,dotted, thick] (1*\xfactor,5*\yfactor) -- (2*\xfactor,6*\yfactor) -- (5*\xfactor,3*\yfactor) -- (6*\xfactor,4*\yfactor) -- (7*\xfactor,3*\yfactor);
	\draw[blue,dashed,thick] (1*\xfactor,7*\yfactor+\delai) -- (5*\xfactor,3*\yfactor+\delai) -- (7*\xfactor,5*\yfactor+\delai);
	
	\draw[red,densely dotted,thick] (0.25*\xfactor,-\yfactor) -- (0.75*\xfactor,-\yfactor);
	\node[label=east:{$f=(3,2,3,4,5,4,5)$}] at (0.75*\xfactor,-\yfactor) {};

	\draw[red,thick] (0.25*\xfactor,-2*\yfactor) -- (0.75*\xfactor,-2*\yfactor);
	\node[label=east:{$f'=(3,2,3,4,3,4,3)$}] at (0.75*\xfactor,-2*\yfactor) {};

	\draw[red,densely dashdotdotted,thick] (0.25*\xfactor,-3*\yfactor) -- (0.75*\xfactor,-3*\yfactor);
	\node[label=east:{$g=(1,2,3,4,5,4,3)$}] at (0.75*\xfactor,-3*\yfactor) {};
		
	\draw[blue,dotted, thick] (4.0*\xfactor,-\yfactor) -- (4.5*\xfactor,-\yfactor);
	\node[label=east:{$R_{\mu-1}(f)=(5,6,5,4,3,4,3)$}] at (4.5*\xfactor,-\yfactor) {};
		
	\draw[blue,dashed,thick] (4.0*\xfactor,-3*\yfactor) -- (4.5*\xfactor,-3*\yfactor);
	\node[blue,label=east:{$R_{\mu-1}(g)=(7,6,5,4,3,4,5)$}] at (4.5*\xfactor,-3*\yfactor) {};
	
	\node[sommet_red] (s1r) at (1*\xfactor,3*\yfactor) {};
	
	\node[sommet_red] (s1r2) at (1*\xfactor,1*\yfactor) {};
	
	\node[sommet_red] (s2r) at (2*\xfactor,2*\yfactor) {};
	\node[sommet_red] (s3r) at (3*\xfactor,3*\yfactor) {};
	\node[sommet_red] (s5r) at (5*\xfactor,5*\yfactor) {};

	\node[sommet_red_blue] (s4r) at (4*\xfactor,4*\yfactor) {};
	\node[sommet_red_blue] (s5b) at (5*\xfactor,3*\yfactor) {};
	\node[sommet_red_blue] (s6r) at (6*\xfactor,4*\yfactor) {};
	\node[sommet_red_blue] (s7r) at (7*\xfactor,5*\yfactor) {};
	\node[sommet_red_blue] (s7b) at (7*\xfactor,3*\yfactor) {};
		
	\node[sommet_blue] (s1b) at (1*\xfactor,5*\yfactor) {};
	\node[sommet_blue] (s1b2) at (1*\xfactor,7*\yfactor) {};	
	\node[sommet_blue] (s2b) at (2*\xfactor,6*\yfactor) {};
	\node[sommet_blue] (s3b) at (3*\xfactor,5*\yfactor) {};
		
\end{tikzpicture}
		\caption{\label{fig:cut_width} $\mathcal F \supset \mathcal F'$ for cut functions~$f$ and~$f'$ as in the proof of Lemma~\ref{lem:extremal}.
		We have $R_{\mu-1}(f) \in \mathcal F\setminus \mathcal F'$ as well as $g\in \mathcal F'\setminus \mathcal F$ implies $R_{\mu-1}(g) \in \mathcal F\setminus \mathcal F'$.}	
	\end{center}
	\end{figure}
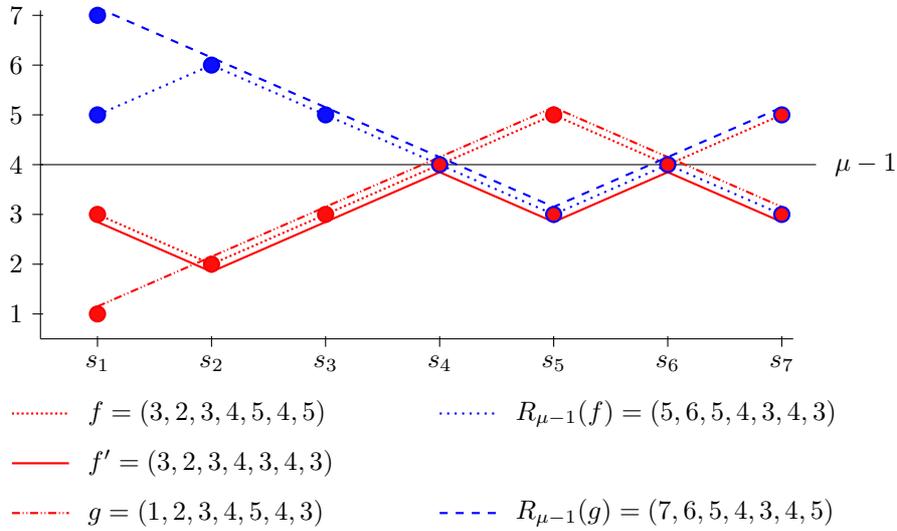

	b) For $n=1$ and $n=2$ there is nothing to prove, as all generators are extremal for each cut function. 
	We therefore assume $n\geq 3$ and prove the claim first if $(W,S)$ is path-like. A cut function~$f$ determines 
	the Coxeter element~$\c_f$ and the number of cut functions that cross~$f$ is equal to the 
	number~$\mathsf{Q}_{\operatorname{rev}(\c_f)}$ of cut paths that cross~$\kappa_{\operatorname{rev}(\c_f)}$ 
	in $\TwoCov$ by Lemma~\ref{lem:equiv_of_crossing}. By Corollary~\ref{cor:number_crossing_cut_paths}, we have
	\[
		\mathsf{Q}_{\operatorname{rev}(\c_f)}
		=
		\sum_{i\in[n-2]}2^{n-2-i}I\big(T^{\operatorname{rev}(\c_f)}_i\big)-\big(2^{n-2}-1\big),
	\]
	where $T^{\operatorname{rev}(\c_f)}_i$ is tile~$i$ of~$\kappa_{\operatorname{rev}(\c_f)}$ 
	and $I\big(T^{\operatorname{rev}(\c_f)}_i\big)$ is the number of distinct initial segments of cut paths~$\kappa$ 
	up to $T^{\operatorname{rev}(\c_f)}_i$ with edges contained
	in $\TwoCov\setminus\mathsf{out}\big(T^{\operatorname{rev}(\c_f)}_i\big)$. The reasoning of Example~\ref{expl:I(T_i)}   
	shows
	\[
		\operatorname{rev}(\c_f)
		\in
		\big\{
			s_1s_2\cdot \cdots\cdot s_n,\ 
			s_ns_{n-1}\cdot \cdots\cdot s_1
		\big\}
		\qquad\Longrightarrow\qquad
		I\big(T^{\operatorname{rev}(\c_f)}_i\big)=2^i
		\quad\text{for all $i\in[n-2]$}.
	\]
	These are clearly the only Coxeter elements	with~$I\big(T^{\operatorname{rev}(\c_f)}_i\big)=2^i$ for all $i\in[n-2]$ and these 
	values are maximum. Thus $I\big(T^{\operatorname{rev}(\c_f)}_i\big)$ attains its maximum value for each $i\leq n-2$ if and only 
	if~$f$ is strictly monotone on~$[i+2]$.
	In particular, $\mathsf{Q}_{\operatorname{rev}(\c_f)}$ is maximum if and only if the cut function~$f$ is strictly monotone. Thus, we
	conclude for any path-like Coxeter system~$(W,S)$ that~$f$ has precisely two extrema if and only if the number of cut 
	functions that cross~$f$ is maximum. 
	
	\medskip
	To analyze type~$D$, we first consider~$D_4$. Without loss of generality, we have to analyze the following four cases for~$c_f$ 
	and~$\kappa_{\operatorname{rev}(c_f)}$ as they represent all cut functions~$f$ in type $D_4$: 
	\begin{center}
		\begin{tikzpicture}[spec/.style={gray},decoration={snake,amplitude=.2mm,segment length=1mm},]

	\node (s2-1) at (0,0) {$s_2$};
	\node (s1-1) at (1,-1) {$s_1$} edge[<-,decorate,red,thick] (s2-1);
	\node (s3-1) at (1,1) {$s_3$} edge[<-,thick] (s2-1);
	\node (s4-1) at (1,0) {$s_4$} edge[<-,thick] (s2-1);
	\node (s2-2) at (2,0) {$s_2$} edge[<-,thick] (s1-1) edge[<-,thick,decorate,red] (s3-1) edge[<-,thick,decorate,red] (s4-1);
	\node at (1,-1.5) {$c_f = c_a\in \mathsf{Cox_a}$};

	\node (s2-1) at (3,0) {$s_2$};
	\node (s1-1) at (4,-1) {$s_1$} edge[<-,decorate,red,thick] (s2-1);
	\node (s3-1) at (4,1) {$s_3$} edge[<-,thick] (s2-1);
	\node (s4-1) at (4,0) {$s_4$} edge[<-,thick,decorate,red] (s2-1);
	\node (s2-2) at (5,0) {$s_2$} edge[<-,thick] (s1-1) edge[<-,thick,decorate,red] (s3-1) edge[<-,thick] (s4-1);
	\node at (4,-1.5) {$c_f = c_b \in \mathsf{Cox_b}$};

	\node (s2-1) at (6,0) {$s_2$};
	\node (s1-1) at (7,-1) {$s_1$} edge[<-,decorate,red,thick] (s2-1);
	\node (s3-1) at (7,1) {$s_3$} edge[<-,thick,decorate,red] (s2-1);
	\node (s4-1) at (7,0) {$s_4$} edge[<-,thick] (s2-1);
	\node (s2-2) at (8,0) {$s_2$} edge[<-,thick] (s1-1) edge[<-,thick] (s3-1) edge[<-,thick,decorate,red] (s4-1);
	\node at (7,-1.5) {$c_f = c_c \in \mathsf{Cox_c}$};

	\node (s2-1) at (9,0) {$s_2$};
	\node (s1-1) at (10,-1) {$s_1$} edge[<-,decorate,red,thick] (s2-1);
	\node (s3-1) at (10,1) {$s_3$} edge[<-,thick,decorate,red] (s2-1);
	\node (s4-1) at (10,0) {$s_4$} edge[<-,thick,decorate,red] (s2-1);
	\node (s2-2) at (11,0) {$s_2$} edge[<-,thick] (s1-1) edge[<-,thick] (s3-1) edge[<-,thick] (s4-1);
	\node at (10,-1.5) {$c_f = c_{max} \in \mathsf{Cox_{max}}$};

\end{tikzpicture}
	\end{center}
	Corollary~\ref{cor:number_crossing_cut_paths} implies
	\[
		\mathsf{Q}_{c_a}=6-3\qquad
		\mathsf{Q}_{c_b}=6-3\qquad
		\mathsf{Q}_{c_c}=6-3\qquad
		\mathsf{Q}_{c_{max}}=3-3.
	\]
	This shows that if the number of crossing cut functions of $f\in \mathsf{CF}(D_4,S)$ is maximum then~$f$ has three extrema which is the 
	minimum number of extrema in this situation. 
	
	We now consider an extension from $(D_n,S)$ to $(D_{n+1},\widetilde S)$ with $n\geq 4$ by adding a new vertex adjacent to the leaf~$s_1$ 
	of $\Gamma_{D_n}$ and appropriate relabeling of generators. Thus $\Gamma_{D_n}$ corresponds to the subgraph of~$\Gamma_{D_{n+1}}$
	induced by~$\widetilde S\setminus\{s_1\}$ and every Coxeter element~$c_n\in\mathsf{Cox}(D_n,S)$ can be extended in precisely
	two ways to a Coxeter element~$c_{n+1} \in \mathsf{Cox}(D_{n+1},\widetilde S)$. Clearly, we have 
	\[
		I(T^{c_{n+1}}_1) \in\{1,2\} 
		\qquad\text{as well as}\qquad
		I(T^{c_n}_i) \leq I(T^{c_{n+1}}_{i+1}) \leq 2 I(T^{c_n}_i)\text{ for $i\in[n-2]$}.
	\]
	Thus 
	\begin{align*}
		\mathsf Q_{c_{n+1}}
			&=
			\sum_{i\in [n-1]} 
				2^{n-1-i} I\big(T^{c_{n+1}}_i\big) 
			- \big(2^{n-1}-1\big)\\
			&\leq
			2^{n-1-1}\cdot 2
				+ \sum_{i\in [n-2]}2^{n-2-i}\cdot\Big(2I\big(T^{c_n}_i\big)\Big) 
				- 2\big(2^{n-2}-1\big) - 1\\
			&=
			2^{n-1} + 2\mathsf Q_{c_n} - 1
	\end{align*}
	with equality if $I(T^{c_{n+1}}_{i+1}) = 2 I(T^{c_n}_i)$ for all $i\in[n-2]$ and $I(T^{c_{n+1}}_{1}) = 2$ 
	which happens if and only if 
	$\mathsf{out}(T^{c_{n+1}}_k)$ and $\mathsf{out}(T^{c_{n+1}}_\ell)$ coincide for all $1\leq k,\ell \leq n-1$.
	Thus, if~$Q_{c_{n+1}}$ is maximum then $c_{n+1} \in \mathsf{Cox_a} \subseteq \mathsf{Cox_{min}}$.
	In other words, if the number of cut functions that cross~$f$ is maximum then~$f$ has three extrema.
	This proves the claim if $(W,S)$ is of type~$D$.	

	Finally, we prove the claim in type $E$. We first analyze $E_6$. Clearly, removing the vertex $s_p=s_5$ from $\Gamma_{E_6}$ yields 
	a Coxeter graph of type~$D_{5}$. Let $c$ be a Coxeter element for type $E_6$ and $\widetilde c$ be the corresponding Coxeter element 
	for $(\widetilde W, S\setminus\{s_5\})$ of type~$D_5$. Since $I(T^{c}_k)=I(T^{\widetilde c}_k)$ for $1 \leq k \leq 3$, 
	we obtain $\mathsf Q_{c} = 2\mathsf Q_{\widetilde c} + I(T^{c}_4) - 1$.

	A case analysis reveals that $\mathsf Q_{c}$ is maximum in type~$E_6$ if and only if $c\in\mathsf{Cox_a}\cup\mathsf{Cox_b}$. Thus, if $f$ is a cut
	function with the maximum number of crossing cut functions then $\operatorname c_f\in \mathsf{Cox_a}\cup\mathsf{Cox_b}$, that is, $f$ has three 
	extrema. To solve the remaining cases~$E_7$ and~$E_8$ we extend from type $E_6$ to $E_7$ and from type~$E_7$ to $E_8$ similarly to the induction 
	step from~$D_n$ to $D_{n+1}$. Again, we obtain $\mathsf Q_{c_{n+1}} \leq 2^{n-1} + 2\mathsf Q_{c_n} - 1$ with equality if and only if 
	$\mathsf{out}(T^{c_{n+1}}_k)$ and $\mathsf{out}(T^{c_{n+1}}_\ell)$ coincide for all $1\leq k,\ell \leq n-1$. Therefore, if~$Q_{c_{n+1}}$ is maximum 
	then $c_{n+1} \in \mathsf{Cox_a} \subseteq \mathsf{Cox_{min}}$. This proves the claim if $(W,S)$ is of type~$E_7$ and $E_8$.
\end{proof}
	
We now characterize the Coxeter elements~$c$ that maximize and minimize the cardinality~$\mathsf S_c$ of a Cambrian acyclic 
domain~$\mathsf{Acyc}_c$.
\begin{theorem}\label{thm:bounds_acyc_domain}
Let $(W,S)$ be a finite irreducible Coxeter system, $c\in \mathsf {Cox}(W,S)$ and $\mathsf S_c=|\mathsf{Acyc}_\c|$. 
\begin{compactenum}[a)]
	\item The cardinality~$\mathsf S_c$ of~$\mathsf{Acyc}_c$ is maximum if and only if $c\in\mathsf{Cox_{max}}$.
	\item The cardinality~$\mathsf S_c$ of~$\mathsf{Acyc}_c$ is minimum if and only if
		\begin{compactenum}[i)]
			\item $c\in \mathsf{Cox_{min}}$ and $(W,S)$ is path-like or of type~$D_4$;
			\item $c\in \mathsf{Cox_a}\cup\mathsf{Cox_b}$ and $(W,S)$ is of type~$E_6$;
			\item $c\in \mathsf{Cox_a}$ and $(W,S)$ is of type~$E_7$, $E_8$ or $D_n$ for $n\geq 5$.
		\end{compactenum}
\end{compactenum}
\end{theorem}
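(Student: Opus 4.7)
The plan is to reduce the theorem to questions about the auxiliary quantity $\mathsf Q_c$ introduced in Corollary~\ref{cor:number_crossing_cut_paths}, translate them to statements about cut functions via Lemma~\ref{lem:equiv_of_crossing}, and then apply the extremality analysis of Lemma~\ref{lem:extremal} together with the explicit computations of Section~\ref{sec:examples}.

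The first step is to recall from Equation~(\ref{eqn:double_counting}) in the proof of Theorem~\ref{thm:singleton_cuts} the identity
\[
  \mathsf S_c \;=\; 2^{n-1}h - \mathsf Q_c + 1.
\]
Consequently $\mathsf S_c$ is maximum (resp.\ minimum) if and only if $\mathsf Q_c$ is minimum (resp.\ maximum). By Lemma~\ref{lem:equiv_of_crossing}, cut paths of $\TwoCov$ that cross $\kappa_\c$ correspond bijectively, after fixing a representative, to cut functions in $\mathsf{CF}(W,S)$ (modulo the equivalence $\simeq$) that cross the cut function $f_c$ determined by~$c$. Thus $\mathsf Q_c$ equals the number of cut functions that cross $f_c$, and the problem becomes one about extrema of cut functions.

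For part~(a), Lemma~\ref{lem:extremal}(a) states that a cut function $f$ has the minimum possible number of crossings precisely when its number of extrema is maximum, i.e.\ equal to~$n$. A cut function has $n$ extrema if and only if its associated Coxeter element provides a bipartition of the Coxeter graph~$\Gamma$, which is exactly the condition $c\in\mathsf{Cox_{max}}$. Hence $\mathsf S_c$ is maximum if and only if $c\in\mathsf{Cox_{max}}$.

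For part~(b), Lemma~\ref{lem:extremal}(b) gives the implication ``$\mathsf Q_c$ maximum $\Rightarrow$ $f_c$ has the minimum number of extrema,'' i.e.\ $c \in \mathsf{Cox_{min}}$; and the converse holds whenever $(W,S)$ is path-like. This already settles the path-like case~(i). For $D_4$, case~(i) follows from Remark~\ref{rem:minima}(ii), where $\mathsf S_c = 22$ for every $c \in \mathsf{Cox_{min}}$. For the remaining types, we know that a minimizer must lie in $\mathsf{Cox_{min}} = \mathsf{Cox_a}\cup\mathsf{Cox_b}\cup\mathsf{Cox_c}$, and it remains only to compare the three values of $\mathsf S_c$ obtained in Proposition~\ref{prop:S_c_for_Cox_min} (equivalently Remark~\ref{rem:minima}): for $D_n$ with $n\ge 5$ the smallest is attained on $\mathsf{Cox_a}$, for $E_6$ it is attained simultaneously on $\mathsf{Cox_a}$ and $\mathsf{Cox_b}$, and for $E_7$ and $E_8$ it is attained on $\mathsf{Cox_a}$. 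This yields cases~(ii) and~(iii).

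The main obstacle is really embedded in the proof of Lemma~\ref{lem:extremal}, namely the fact that in types $D$ and $E$ the converse of the implication ``max $\mathsf Q_c\Rightarrow$ min extrema'' fails, so min-extremum Coxeter elements need not all give the same value of $\mathsf S_c$. To close this gap I rely on the closed-form expressions of Proposition~\ref{prop:S_c_for_Cox_min}, which are in turn derived by computing the invariants $I(T_i^c)$ explicitly for each of $\mathsf{Cox_a}$, $\mathsf{Cox_b}$, $\mathsf{Cox_c}$; all three values differ precisely by terms coming from the local shape of $\Gamma_c$ near the branching vertex, and a direct numerical comparison identifies which subfamily attains the minimum in each type. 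No further input is needed, and the theorem follows.
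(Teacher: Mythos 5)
Your proposal is correct and takes essentially the same route as the paper: reduce to minimizing/maximizing $\mathsf Q_c$ via $\mathsf S_c = 2^{n-1}h-\mathsf Q_c+1$, invoke Lemma~\ref{lem:extremal} (through Lemma~\ref{lem:equiv_of_crossing}) for part~(a) and for the ``any minimizer lies in $\mathsf{Cox_{min}}$'' direction of part~(b), and settle the branching types by an explicit comparison within $\mathsf{Cox_a}\cup\mathsf{Cox_b}\cup\mathsf{Cox_c}$. The only cosmetic difference is that the paper carries out this final comparison on the $\mathsf Q_c$-side using the $I(T_i^c)$ values from the proof of Proposition~\ref{prop:S_c_for_Cox_min}, whereas you compare the resulting $\mathsf S_c$-values of Proposition~\ref{prop:S_c_for_Cox_min} and Remark~\ref{rem:minima} directly, which is equivalent.
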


\begin{proof}
	a) This is a consequence of Theorem~\ref{thm:singleton_cuts} and Corollary~\ref{cor:number_crossing_cut_paths}
	combined with Lemma~\ref{lem:extremal}.
	
	b) When $(W,S)$ is path-like or of type $D_4$, it follows immediately from Lemma~\ref{lem:extremal}.
	Otherwise, to decide the remaining cases for type~$D$ and~$E$, we use the relevant values for $I\big(T^\c_i\big)$ from the 
	proof of Proposition~\ref{prop:S_c_for_Cox_min}. We have to analyze $\c_{\mathsf a}\in\mathsf {Cox_a}$,
	$\c_{\mathsf b}\in\mathsf {Cox_b}$ and $\c_{\mathsf c}\in\mathsf {Cox_c}$. If $(W,S)$ is of type~$D$, we 
	obtain
	\[
		\mathsf{Q}_{\c_{\mathsf a}} = (n-4)2^{n-1}+2^{n-3}+1,
		\qquad
		\mathsf{Q}_{\c_{\mathsf b}} = (n-4)2^{n-1}+3
		\qquad\text{and}\qquad
		\mathsf{Q}_{\c_{\mathsf c}} = (n-4)2^{n-1}+3.
	\]
	The maximum is achieved by $\c_{\mathsf a}$, $\c_{\mathsf b}$ and $\c_{\mathsf c}$ if $n=4$ and only by~$\c_{\mathsf a}$ 
	if $n\geq 5$. If $(W,S)$ is of type~$E$, we similarly obtain
	\[
		\mathsf{Q}_{\c_{\mathsf a}} = (n-5)2^{n-2}+5\cdot 2^{n-4}+1,
		\hspace{0.5cm}
		\mathsf{Q}_{\c_{\mathsf b}} = (n-5)2^{n-2}+2^{n-2}+5
		\hspace{0.25cm}\text{and}\hspace{0.25cm}
		\mathsf{Q}_{\c_{\mathsf c}} = (n-5)2^{n-2}+2n+1.
	\]
	The maximum is achieved by $\c_{\mathsf a}$ and $\c_{\mathsf b}$ if $n=6$ and by $\c_{\mathsf a}$ if $n\in\{7,8\}$.
	In particular, this shows that the number of cut functions that cross~$f$ is not always maximized if the
	number of extrema of~$f$ is minimized.
\end{proof}

\section*{Acknowledgements}

The authors would like to thank Vic Reiner for pointing out his article with Galambos which initiated this work, and 
Cesar Ceballos and Vincent Pilaud for helpful discussions and their hospitality in Paris and Toronto.


\providecommand{\bysame}{\leavevmode\hbox to3em{\hrulefill}\thinspace}
\providecommand{\MR}{\relax\ifhmode\unskip\space\fi MR }
\providecommand{\MRhref}[2]{%
  \href{http://www.ams.org/mathscinet-getitem?mr=#1}{#2}
}
\providecommand{\href}[2]{#2}

\end{document}